\def\R{{\mathbb R}}
\def\Z{{\mathbb Z}}
\newcommand{\la}{\langle}
\newcommand{\ra}{\rangle}
\newtheorem{theorem}{Theorem}
\newtheorem{proposition}[theorem]{Proposition}
\newtheorem{lemma}[theorem]{Lemma}
\theoremstyle{remark}
\newtheorem{remark}[theorem]{Remark}
\numberwithin{equation}{section}
\numberwithin{theorem}{section}
\numberwithin{table}{section}
\numberwithin{figure}{section}
\def\eqnn{\begin{eqnarray*}}
\def\eeqnn{\end{eqnarray*}}
\def\eqn{\begin{eqnarray}}
\def\eeqn{\end{eqnarray}}
\title{Global Well-posedness of the NLS System for infinitely many fermions}
\date{\today}
\author{Thomas Chen}
\address{T. Chen,  
Department of Mathematics, University of Texas at Austin.}
\email{tc@math.utexas.edu}
\author{Younghun Hong}
\address{Y. Hong,  
Department of Mathematics, University of Texas at Austin.}
\email{yhong@math.utexas.edu}
\author{Nata\v{s}a Pavlovi\'c}
\address{N. Pavlovi\'c,  
Department of Mathematics, University of Texas at Austin.}
\email{natasa@math.utexas.edu}
\begin{document}

\maketitle

\begin{abstract}
In this paper, we study the mean field quantum fluctuation dynamics for  a system of
infinitely many fermions with delta pair interactions 
in the vicinity of an equilibrium solution (the Fermi sea) at zero temperature, in dimensions $d=2,3$,
and prove global well-posedness of the corresponding Cauchy problem.
Our work extends some of the recent important results obtained by M. Lewin and J. Sabin in
\cite{LS1,LS2}, who addressed this problem for more regular pair
interactions. 
\end{abstract}

\tableofcontents

\section{Introduction}

There are two fundamental types of elementary particles in nature, bosons and fermions.
The dynamics of a system of $N$
bosons in ${\R}^d$ interacting pairwise is described by solutions to the
Schr\"odinger equation on $L^2(\R^{Nd})$
\begin{equation}\label{eq-Schrod-1}
    i\partial_t\psi_N = H_N \psi_N
    \;\;,\;\;\psi_N(0)=\psi_{N,0}
\end{equation}
where  
$$
	H_N=\sum_{j=1}^N(-\Delta_{x_j})+\sum_{1\leq j<k\leq N}U(x_j-x_k)
$$ 
is the $N$-particle Hamiltonian; the
precise form of the pair interaction potential $U$ depends on the model at hand. 
Bosons satisfy Bose-Einstein statistics, that is,
the bosonic wave function $\psi_N$ is completely symmetric
under exchange of particle variables $x_j\in\R^d$, i.e.,
$$
    \psi_N(x_1,\dots,x_N)=\psi_N(x_{\sigma(1)},\dots,x_{\sigma(N)}) \,,
$$
for any permutation $\sigma\in S_N$ (the $N$-th symmetric group). 
The simplest example of a completely symmetric state is a factorized state
$$
    \psi(x_1,\dots,x_N) = \prod_{j=1}^N \phi(x_i) \,.
$$
It describes a configuration where all bosons are in the same quantum mechanical state $\phi$.
This situation prominently occurs in Bose-Einstein condensation, a topic in mathematical research 
that has been extremely actively investigated in recent years, \cite{ALSSY,LSY}.
For a bosonic system with mean field scaling $U(x)=\frac1N V(x)$, 
it can be proven rigorously  (for $V$ sufficiently regular) that in the limit $N \rightarrow \infty$,
the mean field dynamics is characterized by a factorized state where $\phi$ satisfies a
nonlinear Hartree equation, 
$$
	i\partial_t\phi=-\Delta\phi +(V*|\phi|^2)\phi.
$$ 
If $V=V_N$  depends on $N$ and
tends, in a suitable manner, to a delta distribution $\delta$ as $N\rightarrow\infty$,
it can be proven that
the mean field dynamics is determined by a 
nonlinear  Schr\"{o}dinger equation, 
$$
i\partial_t\phi=-\Delta\phi +|\phi|^2\phi,
$$
see  \cite{CxH,CP2,CP3,CHPS1,ESY1,ESY2,FGS,He,KSS,LNR,KM2,Sp} 
and the references therein, and \cite{Sc1} for a survey.

In this paper, we will study the mean field dynamics of a system of infinitely 
many fermions, and are particularly interested in the situation where the pair
interaction between particles are described by a delta potential.
Fermions satisfy Fermi-Dirac statistics; while
their dynamics is determined by a Schr\"odinger equation of the same form as \eqref{eq-Schrod-1},
the wave function $\psi_N$ is completely antisymmetric
under exchange of particle variables,
$$
    \psi_N(x_1,\dots,x_N)=(-1)^{sign(\sigma)}
    \psi(x_{\sigma(1)},\dots,x_{\sigma(N)})
$$
for any permutation $\sigma\in S_N$. The simplest example of such a state is
a Slater determinant
$$
    \psi(x_1,\dots,x_N) = {\rm det}\Big[u_i(x_j)\Big]_{i,j=1}^N \,.
$$
As a consequence of the antisymmetry, no two fermions can be in same quantum state $u_j$,
otherwise $\psi_N$ vanishes; in fact,
$\{u_j\}$ can generally be chosen to be an orthonormal family in $L^2(\R^d)$. 
This is the Pauli exclusion principle.

The ground state for a system of $N$ non-interacting fermions on the unit torus ${\mathbb T}^d$ 
with kinetic energy operator
$\sum_j(-\Delta_{x_j})$ has the following simple form.
In frequency space, 
$$
    \Big(\psi_N \;,\; \sum_j(-\Delta_{x_j})\psi_N\Big)  = \sum_{p_i\in\Z^d, i=1,...,N}\Big(
    \sum_{j=1}^N p_j^2 \Big)\;|\widehat{\psi_N}(p_1,\dots,p_N)|^2 \,.
$$
Therefore, it is easy to see that the lowest energy configuration is given by 
$\widehat{\psi_N}=\bigwedge_{i\in F_N}\delta_{i}$ where $F_N\subset\Z^d$ is the subset of the $N$ distinct lattice points  
closest to the origin, and $\delta_i$ is the Kronecker delta at $i\in\Z^d$. Clearly, 
the set $F_N$ is an approximate ball $B_{R_N}(0)\cap{\mathbb Z}^d$ of radius $R_N \sim N^{1/d}$,
and it is referred to as the {\em Fermi sea}. 

In this paper, we study the mean field quantum fluctuation dynamics for  a system of
infinitely many fermions with delta pair interactions 
in the vicinity of the Fermi sea at zero temperature, in dimensions $d=2,3$,
and prove global well-posedness of the Cauchy problem.
Our work extends some recent important results obtained by M. Lewin and J. Sabin,
\cite{LS1,LS2}, who address this problem for more regular pair
interactions. 

To describe the problem in more detail, we first consider a finite system of $N$ fermions with  
mean field interactions involving a pair potential $w$.
This system is described by $N$ coupled Hartree equations 
\begin{equation}\label{NLHsysN}
\left\{\begin{aligned}
i\partial_t u_1&=(-\Delta+w * \rho)u_1 \quad,& \quad\quad  u_1(t=0)&=u_{1,0}\\
               &\cdots &   & \cdots\\
i\partial_t u_N&=(-\Delta+w * \rho)u_N \quad,& \quad\quad  u_N(t=0)&=u_{N,0}
\end{aligned}
\right.
\end{equation}
where $\rho$ is the total density of particles
\begin{equation}\label{NLHsysN-rho}
\rho(t,x)=\sum_{j=1}^N|u_j(t,x)|^2
\end{equation}
and $w$ is the interaction potential.
To account for the Pauli principle, the family $\{u_{j,0}\}_{j=1}^N$ 
is assumed to be orthonormal, and
it can be shown that  $\{u_{j}\}_{j=1}^N$ remains orthonormal
for $t>0$, as long as the Cauchy problem is well-posed in an appropriate space of solutions.

Systems similar to \eqref{NLHsysN}, but with a finite expected particle number
$\int \rho dx$ as $N\rightarrow\infty$,
have been analyzed extensively in the literature, see for instance
\cite{ACV1,BdPF-1, BdPF-2, BM1,Ch-1976, Z-1992}. Those systems describe
a dilute gas with normalized density 
$\rho(t,x)=\frac1N\sum_{j=1}^N|u_j(t,x)|^2$ 
(or $\rho(t,x)=\sum_{j=1}^\infty \lambda_j|u_j(t,x)|^2$ with $\lambda_j>0$ and $\sum\lambda_j =1$) 
such that in the limit as $N\rightarrow\infty$, $\gamma$ is trace class.
They can be derived rigorously from
a combined mean field and semi-classical limit
from a quantum system of interacting fermions; see 
\cite{BGGM-2003, BEGMY-2002, EESY-2004, FK-2011, BNS,NS} 
and the references therein. 
In this limit, 
the exchange term is of a lower order of magnitude in powers of $\frac1N$
than the main interaction term, therefore it 
does not appear in this analysis.

Corresponding to \eqref{NLHsysN}, one can introduce the one particle 
density matrix 
\begin{equation} \label{intro-gamma} 
\gamma_N(t) = \sum_{j=1}^N |u_{j}(t) \rangle\langle u_{j}(t)|,
\end{equation} 
which is the rank-$N$ orthogonal projection onto the 
span of the orthonormal family $\{u_{j}(t)\}_{j=1}^N$.
Then the system \eqref{NLHsysN} can be written in the density matrix form
\begin{align} 
    & i\partial_t\gamma_N = [-\Delta, \gamma_N] + [w * \rho_{\gamma_N}, \gamma_N]  \label{NLHsys1-N} \\
    & \gamma_N(t=0) =  \sum_{j=1}^N |u_{j,0}\rangle\langle u_{j,0}|, \label{NLHsys1-id-N}
\end{align}
%with initial data $\gamma_0$ corresponding to $N$ fermions 
%whose wave functions are given by the orthonormal family  $\{u_{j,0}\}_{j=1}^N$ in $L^2$;
%orthogonality accounts for the fermion statistics. 
with density
\begin{equation} \label{NLHsys1-rho-N}
    \rho_{\gamma_N}(t,x) = \gamma_N(t,x,x) \,.
\end{equation} 
Orthonormality of the family $\{u_{j}\}_{j=1}^N$  implies that $0\leq\gamma\leq1$.

For the system \eqref{NLHsysN} - \eqref{NLHsysN-rho}, respectively \eqref{NLHsys1-N} - \eqref{NLHsys1-rho-N},
the expected particle number $\int\rho_N dx$ diverges in the limit $N\rightarrow\infty$,
hence
the one-particle density matrix $\gamma=\sum_{j=1}^\infty|u_j\rangle\langle u_j|$
fails to be of trace class (but has bounded operator norm). 
The analysis of the dynamics of the fermion gas in this case becomes much more difficult.
Lewin and Sabin, in \cite{LS1,LS2}, were the first authors to address the behavior of 
\eqref{NLHsysN} in this situation. The main problem is to properly give meaning to,
and to understand
solutions to  
the evolution equation 
\begin{align} 
    & i\partial_t\gamma = [-\Delta, \gamma] + [w * \rho_{\gamma}, \gamma]  \label{NLHsys1} \\
    & \gamma(t=0) =  \gamma_0, \label{NLHsys1-id} 
\end{align}
%with initial data $\gamma_0$ corresponding to $N$ fermions 
%whose wave functions are given by the orthonormal family  $\{u_{j,0}\}_{j=1}^N$ in $L^2$;
%orthogonality accounts for the fermion statistics. 
with $\rho_{\gamma}(t,x) = \gamma(t,x,x)$,
where $\gamma_0$ is not of trace class.
To be consistent with
the fact that the system models the behavior of infinitely many fermions, the Pauli principle
is incorporated by requiring that $0 \leq \gamma(0) \leq 1$, 
thus $\gamma$ has a bounded operator norm
(among other properties). We note that the exchange term is in this situation
again of lower order, and is therefore omitted.

In  \cite{LS1,LS2}, Lewin and Sabin study 
the dynamics of trace class perturbations $Q:=\gamma-\gamma_f$ 
around a non-trace class reference state $\gamma_f$.
The latter is chosen to 
corresponds to the Fermi sea of the non-interacting system. 
For inverse temperature $\beta>0$ and chemical 
potential $\mu>0$, $\gamma_f$ is given by the Fermi-Dirac distribution
\eqn 
    \gamma_f(x,y) = \int_{\R^d} \frac{e^{ip(x-y)}}{e^{\beta(p^2-\mu)}+1}  dp = 
    \Big(\frac{1}{e^{\beta(-\Delta-\mu)}+1}\Big)(x,y) \,.
\eeqn
while in the zero temperature limit,
\eqn
    \gamma_f = \Pi_\mu^-=\mathbf{1}_{(-\Delta\leq\mu)} \,.
\eeqn
Lewin and Sabin prove that the Cauchy problem for  
$Q$ is globally well-posed in a suitable subspace
of the space of trace class operators, provided that the pair interaction $w$
is sufficiently regular. 

In the work at hand, we extend the results of \cite{LS1,LS2} to the most singular case 
$w=\delta$, so that the potential term becomes $\delta*\rho=\rho$.
The finite $N$ analogue to \eqref{NLHsysN} is the system of coupled NLS equations,
\begin{equation}\label{NLSsysN}
\left\{\begin{aligned}
i\partial_t u_1&=(-\Delta+\rho)u_1 \quad,& \quad\quad  u_1(t=0)&=u_{1,0}\\
               &\cdots &   & \cdots\\
i\partial_t u_N&=(-\Delta+\rho)u_N \quad,& \quad\quad  u_N(t=0)&=u_{N,0}
\end{aligned}
\right.
\end{equation}
where
\begin{equation}
\rho(t,x)=\sum_{j=1}^N|u_j(t,x)|^2.
\end{equation}
The family  $\{u_{j}\}_{j=1}^N$ remains orthonormal for $t>0$, as long as 
\eqref{NLSsysN} is well-posed.

To study the system in the limit $N\rightarrow\infty$, we employ the density matrix 
formalism as in  \cite{LS1,LS2}, and consider \eqref{NLHsys1} with $w\rightarrow\delta$,
\begin{align}
    & i\partial_t\gamma = [-\Delta,\gamma] + [\rho,\gamma] \label{NLSsys1} \\
    & \gamma(t=0) = \gamma_0 . \label{NLSsys1-id} 
\end{align}
Again, the Pauli principle requires that $0 \leq \gamma_0 \leq 1$.
For simplicity, we consider a reference state $\gamma_f$ that corresponds to the
Fermi sea of the non-interacting system at zero temperature, and chemical potential $\mu>0$, 
\eqn
    \gamma_f = \Pi_\mu^-=\mathbf{1}_{(-\Delta\leq\mu)} \,.
\eeqn
We study perturbations
\eqn 
    Q = \gamma - \Pi_\mu^- \,,
\eeqn
in an appropriate space of solutions,
and establish global well-posedness for the Cauchy problem
\begin{equation}\label{NLS}
i\partial_tQ=[-\Delta+\rho_Q,\Pi_\mu^-+Q]
\;\;,\;\; Q(0)=Q_0
\end{equation}
in two and three dimensions. As a crucial new ingredient that allow us to 
extend the work of 
Lewin-Sabin \cite{LS1} to the much more singular case
of the delta function potential, 
we establish new 
Strichartz estimates for density functions and density matrices  in Section \ref{sec-strichartz}. 
We remark that Lewin and Sabin used Strichartz-type estimates that they established in \cite{LS1}
in the case of positive temperature, but not in the zero temperature situation.
\\

\section{Statement of the main result} \label{sec-statement}

In this section, we first introduce some notation and relevant operator spaces.
Then, we present the main results and describe the
strategy of the proof. 

\subsection{Notation} 

For simplicity of exposition, we assume that the chemical potential has the value $\mu=1$. 
%Indeed, for $\mu>0$, one can easily modify just by replacing the reference state $\Pi_1^-$ by $\Pi_\mu^-$, the standard Japanese bracket $\la x\ra=\sqrt{1+|x|^2}$ by $\sqrt{\mu+|x|^2}$ and $(-\Delta-1)$ by $(-\Delta-\mu)$, respectively.

We will denote by $\mathfrak{S}^p$ the Schatten spaces
\begin{equation}
    \|Q\|_{\mathfrak{S}^p} := \Big({\rm Tr}|Q|^p \Big)^{1/p}
\end{equation}
for $p\geq1$.
We define the Banach space $\mathcal{X}$ by the collection of self-adjoint operators on $L^2$, equipped with the norm
\begin{equation}
\|Q\|_{\mathcal{X}}=\|Q\|_{\textup{Op}}+\sum_{\pm}\||\Delta+1|^{\frac{1}{2}}Q^{\pm\pm}|\Delta+1|^{\frac{1}{2}}\|_{\mathfrak{S}^1},
\end{equation}
where 
$Q^{\pm\pm}=\Pi_1^\pm Q\Pi_1^\pm$, 
$\Pi_1^-=\mathbf{1}_{(-\Delta\leq1)}$ 
and 
$\Pi_1^+=\mathbf{1}_{(-\Delta>1)}$.

For an operator $Q\in\mathcal{X}$, we denote 
\begin{equation}\label{relative kinetic energy}
\textup{Tr}_0(-\Delta-1)Q:=\sum_{\pm}\textup{Tr}|\Delta+1|^{\frac{1}{2}}Q^{\pm\pm}|\Delta+1|^{\frac{1}{2}},
\end{equation}
and we call it the \textit{relative kinetic energy} from the reference state $\Pi_1^-$. 
We note that  for a finite-rank smooth operator $Q$, 
$$
	\textup{Tr}_0(-\Delta-1)Q=\textup{Tr}(-\Delta-1)Q.
$$ 
For an operator $Q\in\mathcal{X}$, the relative kinetic energy can be expressed as the limit of $\textup{Tr}(-\Delta-1)Q_n$ as $n\to \infty$, where $\{Q_n\}_{n=1}^\infty$ is a sequence of finite-rank smooth operators such that $Q_n\to Q$ in $\mathcal{X}$ as $n\to \infty$ (see Lemma 3.2 in \cite{FLLS1}).

\subsubsection*{\bf The space of initial data} 
We define the \textit{relative energy space}, which will contain the initial data for our main global well-posedness result, as the collection of perturbations having finite operator norm and relative kinetic energy, 
\begin{equation}\label{relative energy space}
\mathcal{K}:=\Big\{Q=\gamma-\Pi^-\in\mathcal{X}:\ 0\leq\gamma\leq 1\Big\}.
\end{equation}
For an operator $Q\in\mathcal{K}$, we define the \textit{relative energy} of the NLS system by
\begin{equation}
\mathcal{E}(Q):=\textup{Tr}_0(-\Delta-1)Q+\frac{1}{2}\int_{\mathbb{R}^d}(\rho_Q)^2 dx.
\end{equation}

\begin{remark}
We note that if $Q\in\mathcal{K}$, the relative kinetic energy is positive definite, 
and is well-defined (see Lemma \ref{relative kinetic energy'}).
\end{remark}

\begin{remark}
In this article, we restrict ourselves to two and three-dimensions, since otherwise, the relative energy is not well-defined in the relative energy space $\mathcal{K}$, because the potential energy is not bounded by the relative kinetic energy in other dimensions. Indeed, in the proof of Lemma \ref{relative kinetic energy'}, the use of the Lieb-Thirring inequality \eqref{LT inequality} fails when $d=1$, and the use of the generalized Sobolev inequality \eqref{Sobolev inequality} fails when $d\geq 4$.
\end{remark}

\subsubsection*{\bf The solution space}
The solution to the NLS system \eqref{NLS} that we obtain in this paper belongs to the space
 $\mathfrak{Y}^1$ which is defined as follows. Given $I\subset\mathbb{R}$, we define the Banach space $\mathfrak{Y}^\alpha(I)$ of solutions by
\begin{equation}\label{Y alpha norm}
\|Q\|_{\mathfrak{Y}^\alpha(I)}:=\|Q\|_{C_t(I;\textup{Op})}+\|\Pi_2^+Q\|_{\mathcal{S}^\alpha(I)}+\|\rho_Q\|_{L_t^2(I;H^{\alpha+\frac{1}{2}-\eta})\bigcap L_t^\infty(I; L^2)},
\end{equation}
where $\mathcal{S}^\alpha(I)$ is a Strichartz space of operator kernels with $\alpha$ derivatives;
its precise definition is given in \eqref{eq-StrichNorm-def-1}, below.
Moreover,  $\eta=\eta(d,\alpha)\geq0$ is either small or zero, depending on $d$ and $\alpha$, and is 
defined in \eqref{eta}.

\subsection{The main result and a description of the proof}

We now state our main theorem. 
\begin{theorem}[Global well-posedness]\label{main theorem}
Let $d=2,3$. For initial data $Q_0\in\mathcal{K}$, there exist arbitrarily large
$T^-, T^+ > 0$ and a unique global solution $Q
\in\mathfrak{Y}^1((-T^-,T^+))$ to the system \eqref{NLS}. 
%For initial data $Q_0\in\mathcal{K}$, there exists a unique global solution $Q \in\mathfrak{Y}^1(\R)$ to the system \eqref{NLS}. 
Moreover, $Q(t)\in\mathcal{K}$ and the relative energy is conserved, i.e., 
$$
	\mathcal{E}(Q(t))=\mathcal{E}(Q_0) \mbox{ for all } t\in\mathbb{R}.
$$
\end{theorem}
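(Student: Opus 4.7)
The plan is to reduce the problem to a standard local well-posedness argument via a Duhamel contraction in the $\mathfrak{Y}^1$ norm, and then to use conservation of the relative energy $\mathcal{E}$ together with the a priori bound $\|Q(t)\|_{\textup{Op}}\le 1$ to iterate the local solution to an arbitrarily large time interval. Rewriting \eqref{NLS} in Duhamel form gives
\begin{equation*}
Q(t) = e^{it\Delta}Q_0 e^{-it\Delta} - i\int_0^t e^{i(t-s)\Delta}\bigl[\rho_{Q(s)},\,\Pi_1^- + Q(s)\bigr]e^{-i(t-s)\Delta}\,ds,
\end{equation*}
and I would set up the associated map $\Phi$ on a ball in $\mathfrak{Y}^1(I)$ for a short symmetric interval $I\ni 0$. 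The three constituents of the $\mathfrak{Y}^1$ norm --- the operator norm of $Q$, the density-matrix Strichartz norm $\mathcal{S}^1$ of $\Pi_2^+ Q$, and the $L^2_t H^{1/2-\eta}\cap L^\infty_t L^2$ norm of $\rho_Q$ --- would be controlled term-by-term via the density and density-matrix Strichartz estimates constructed in Section \ref{sec-strichartz}, with a smallness factor from $|I|$ (and a priori from the initial size of the ball) making $\Phi$ a contraction.

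The most delicate piece of the nonlinearity is the linear response term $[\rho_Q,\Pi_1^-]$, since $\Pi_1^-$ is not of trace class and the contribution of a term like $e^{it\Delta}\Pi_1^- Q_0 \Pi_1^- e^{-it\Delta}$ to $\rho_\gamma$ has to be estimated by exploiting oscillations of the free evolution against the spectral projection $\Pi_1^-$. This is exactly where the new density/density-matrix Strichartz estimates built in Section \ref{sec-strichartz} are decisive, and it is the step where the delta interaction is borderline compared to the more regular $w$ handled in \cite{LS1,LS2}. The quadratic term $[\rho_Q,Q]$ is absorbed by pairing the $\mathcal{S}^1$ control on $\Pi_2^+ Q$ with the $L^2_t H^{1/2-\eta}$ bound on $\rho_Q$ through Hölder and Sobolev estimates, and the remaining low-frequency piece $\Pi_1^- Q$ is controlled by operator-norm and Kato–Seiler–Simon type bounds coming directly from membership in $\mathcal{X}$.

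Once the local solution is in hand, I would verify propagation of the spectral constraint $0\le\Pi_1^-+Q(t)\le 1$ (so $Q(t)\in\mathcal{K}$) and conservation of the relative energy $\mathcal{E}(Q(t))=\textup{Tr}_0(-\Delta-1)Q(t)+\tfrac12\|\rho_{Q(t)}\|_{L^2}^2$ by a density argument: approximate $Q_0$ by smooth finite-rank perturbations for which the equation reads $i\partial_t\gamma=[-\Delta+\rho_\gamma,\gamma]$ and the dynamics is a unitary conjugation $\gamma(t)=U(t)\gamma(0)U(t)^*$ by the self-adjoint mean-field generator, both preserving $0\le\gamma\le 1$ and $\mathcal{E}$; continuity of each functional in the $\mathfrak{Y}^1$ topology and of the Duhamel map in the $\mathcal{X}$ topology then transfers both properties to the limit. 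Global extension finally follows from the blow-up alternative: $\|Q(t)\|_{\textup{Op}}\le 1$ is automatic, and the conserved energy together with the coercivity of the relative kinetic energy on $\mathcal{K}$ (via the Lieb--Thirring and generalized Sobolev inequalities of Lemma \ref{relative kinetic energy'}) yields a time-independent bound on $\|Q(t)\|_{\mathcal{X}}$, which in turn makes the local existence time uniform in $t$.

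The main obstacle I expect is precisely the Strichartz analysis of the linear response term at the level of $\rho_Q\in L^2_t H^{1/2-\eta}$: because $w=\delta$ no convolution smoothing is available, so the full strength of the new density-matrix Strichartz estimates --- including the fine-tuned regularity loss $\eta=\eta(d,\alpha)$ built into the $\mathfrak{Y}^1$ norm --- has to be deployed to close the fixed-point estimate, whereas in \cite{LS1,LS2} the smoothness of $w$ easily absorbs the rough density $\rho_Q$. All of the analytic weight of the argument is concentrated at this step.
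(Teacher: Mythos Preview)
Your overall strategy matches the paper's: local well-posedness in $\mathfrak{Y}^1$ via a Duhamel contraction (Theorem \ref{LWP0}), conservation of the relative energy through approximation by regular solutions (Section \ref{Conservation of the relative energy at 0T}), and global extension by the blow-up alternative (Section \ref{sec-loctoglob}). Two points need correction.

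First, the density component of the $\mathfrak{Y}^1$ norm is $L_t^2 H^{3/2-\eta}\cap L_t^\infty L^2$, not $L_t^2 H^{1/2-\eta}$; the extra half-derivative gain is exactly what allows the commutator estimates of Lemma \ref{LWP lemma} to close, since they require $\rho_Q\in L_t^2 H^1$. Relatedly, your identification of the ``most delicate piece'' is off: the linear response term $[\rho_Q,\Pi_1^-]$ is in fact easy (Lemma \ref{LWP lemma}(ii)), while the real difficulty in the homogeneous term is bounding $\rho_{e^{it\Delta}\Pi_2^- Q_0\Pi_2^- e^{-it\Delta}}$ in $L_t^\infty L^2$, for which the Strichartz estimates of Section \ref{sec-strichartz} give nothing (they carry a factor $|\nabla|^{1/2}$ and are useless at low frequency). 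The paper handles this low-low piece separately via the Lieb--Thirring inequality \eqref{LT inequality}; see Proposition \ref{local-in-time Strichartz}.

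Second, and more seriously, your energy-conservation argument has a gap: the functional $Q\mapsto\textup{Tr}_0(-\Delta-1)Q$ is \emph{not} continuous in the $\mathfrak{Y}^1$ topology, which only carries $\mathfrak{S}^2$-type control (namely $\mathcal{H}^1$) on $\Pi_2^+Q$, not the $\mathfrak{S}^1$ control needed to pass traces to the limit. The paper instead uses weak lower semi-continuity of the relative kinetic energy to obtain $\mathcal{E}(Q(t))\le\liminf_n\mathcal{E}(Q^{(n)}(t))=\liminf_n\mathcal{E}(Q_0^{(n)})=\mathcal{E}(Q_0)$, and then reruns the same argument backward in time from $t$ to $0$ to get the reverse inequality. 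Your ``continuity of each functional'' shortcut would not work as stated.
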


%This result allows us to enhance local well-posedness to global well-posedness for \eqref{NLS}.

Our approach to proving Theorem \ref{main theorem} is motivated by the strategy that Lewin and Sabin used in \cite{LS1}.
However, in order to implement it in the case of the singular $\delta$ potential considered
in our paper, we introduce new Strichartz estimates for density functions, and
consequently need to employ
different intermediate solutions spaces for our analysis.
Similar arguments are used for the proof of energy conservation for solutions
to dispersive nonlinear PDE in the energy
space, see e.g. \cite{Caz}.

We will now summarize the key steps in our construction. 

\bigskip

{\bf Step 1}: In  Section \ref{sec-XYspaceLWP} we prove local well-posedness
of the NLS system \eqref{NLS} in a space containing ${\mathcal K}$. 

More precisely, we define the Banach space of initial data, $\mathfrak{X}^\alpha$, for $\alpha\geq1$, by
\begin{equation}
\|Q\|_{\mathfrak{X}^\alpha}:=\|Q\|_{\textup{Op}}+\|\Pi_2^+Q\|_{\mathcal{H}^\alpha}\,,
\end{equation}
where $\mathcal{H}^\alpha$ is the Hilbert-Schmidt type Sobolev space defined as follows: 
\begin{equation} \label{def-HSSob}
\|Q\|_{\mathcal{H}^\alpha}:=\|\la\nabla\ra^\alpha Q\la\nabla\ra^\alpha\|_{\mathfrak{S}^2}=\|\la\nabla_x\ra^\alpha\la\nabla_{x'}\ra^\alpha Q(x,x')\|_{L_x^2L_{x'}^2}.
\end{equation}
We note that 
\begin{equation}
    {\mathcal K}\subset\mathfrak{X}^\alpha
\end{equation}
for any $\alpha\geq1$.

Then our local well-posedness results can be stated as follows: 

\begin{theorem}[Local well-posedness]\label{LWP0}
Let $d=2,3$ and $\alpha\geq 1$. Given initial data $Q_0\in\mathfrak{X}^\alpha\cap\mathcal{K}$, there exists an interval $I$ and a unique solution $Q\in\mathfrak{Y}^\alpha(I)$ to the equation \eqref{NLS}.
\end{theorem}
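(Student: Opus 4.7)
The plan is to set up a contraction-mapping argument on the Duhamel reformulation of \eqref{NLS}. Writing $\mathcal{U}(t) := e^{it\Delta}$ and using $[-\Delta,\Pi_1^-] = 0$, the equation is equivalent to
\begin{equation*}
Q(t) = \mathcal{U}(t)Q_0\mathcal{U}(t)^{*} - i\int_0^t \mathcal{U}(t-s)\bigl([\rho_{Q(s)},\Pi_1^-] + [\rho_{Q(s)},Q(s)]\bigr)\mathcal{U}(t-s)^{*}\,ds.
\end{equation*}
Let $\Phi$ denote the map sending $Q$ to the right-hand side. I would look for a fixed point of $\Phi$ in a closed ball of $\mathfrak{Y}^\alpha(I)$ of radius comparable to $\|Q_0\|_{\mathfrak{X}^\alpha} + \|\rho_{Q_0}\|_{L^2}$ on a time interval $I = (-T,T)$, and show that for $T$ small enough $\Phi$ is a contraction there.

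Three quantities need to be controlled in the $\mathfrak{Y}^\alpha(I)$ norm: the operator norm, the Strichartz norm $\|\Pi_2^+Q\|_{\mathcal{S}^\alpha(I)}$, and $\|\rho_Q\|_{L^2_t H^{\alpha+1/2-\eta}\cap L^\infty_t L^2}$. Since $\Pi_2^+$ commutes with $\mathcal{U}(t)$, applying it to the Duhamel identity reduces the high-frequency operator Strichartz bound to controlling $\Pi_2^+ Q_0 \in \mathcal{H}^\alpha$ by standard operator Strichartz estimates for the free evolution, together with the source $\Pi_2^+\bigl([\rho_Q,\Pi_1^-] + [\rho_Q,Q]\bigr)$ in the dual Strichartz space. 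For the density, taking the diagonal of the Duhamel identity and invoking the new density Strichartz estimates from Section \ref{sec-strichartz} yields the $L^2_t H^{\alpha+1/2-\eta}$ bound, while the $L^\infty_t L^2$ bound follows by a fixed-time argument anchored on $\rho_{Q_0} \in L^2$, a consequence of $Q_0 \in \mathcal{K}$ and the Lieb--Thirring-type inequality in dimensions $d = 2,3$. The operator norm of $Q$ is controlled by unitarity of $\mathcal{U}$ and an $L^1_t L^\infty_x$ bound on $\rho_Q$ coming from Sobolev embedding applied to the density component of $\mathfrak{Y}^\alpha$.

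The multilinear work reduces to estimating the two commutators in dual Strichartz spaces for both the operator and density outputs. The piece $[\rho_Q, Q]$ is bilinear in unknowns tracked by the solution norm and is handled by fractional Leibniz estimates in the Hilbert--Schmidt Sobolev scale together with the embedding $\mathcal{H}^\alpha \hookrightarrow \mathrm{Op}$. The piece $[\rho_Q, \Pi_1^-]$ is the main obstacle: because $\Pi_1^-$ is not trace class, Schatten-class Hölder cannot be used directly, and one must exploit the sharp frequency-cutoff structure of $\Pi_1^-$ against the high-frequency projection $\Pi_2^+$ appearing in the operator Strichartz norm, while the density contribution from this source is precisely what forces the introduction of the new density Strichartz bounds of Section \ref{sec-strichartz}. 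Each of these estimates should produce a small time factor $T^{\theta}$ with $\theta > 0$, so that for $T$ small $\Phi$ maps the ball into itself and is a contraction, yielding a unique fixed point in $\mathfrak{Y}^\alpha(I)$; uniqueness in the full space follows by applying the same multilinear bounds to a difference of solutions.
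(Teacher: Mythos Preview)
Your proposal is essentially correct and follows the same contraction-mapping route as the paper: Duhamel formulation, control of the three pieces of the $\mathfrak{Y}^\alpha$ norm via the operator-kernel Strichartz estimates (Theorem \ref{Strichartz estimates for operator kernels}), the local-in-time density Strichartz estimates (Proposition \ref{local-in-time Strichartz}), and commutator bounds in $L_t^1(I;\mathcal{H}^\alpha)$ (Lemma \ref{LWP lemma}) producing the small time factors $|I|^{1/2}$ and $|I|^{1/8}$.

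Two places where your sketch is slightly off compared to what actually closes:
\begin{itemize}
\item For the $C_t(\textup{Op})$ component you propose bounding $\|[\rho_Q,\Pi^-+Q]\|_{\textup{Op}}$ via $\|\rho_Q\|_{L^\infty_x}$ and Sobolev embedding from the density norm. At the endpoint $d=3$, $\alpha=1$ the density part of $\mathfrak{Y}^\alpha$ only controls $L_t^2 H^{3/2-\eta}\cap L_t^\infty L^2$, which does \emph{not} embed into $L_t^1 L^\infty_x$. The paper instead uses $\|\cdot\|_{\textup{Op}}\le\|\cdot\|_{\mathfrak{S}^2}\le\|\cdot\|_{\mathcal{H}^\alpha}$ and applies the same $L_t^1\mathcal{H}^\alpha$ commutator bounds already needed for the other two components; nothing extra is required.
\item You flag $[\rho_Q,\Pi_1^-]$ as the main obstacle, needing the interplay with $\Pi_2^+$. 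In fact this term is the easy one: the kernel $\Pi^-(x-x')$ is the inverse Fourier transform of $\mathbf{1}_{|\xi|\le1}$, hence Schwartz, so $\rho_Q\Pi^-\in\mathcal{H}^\alpha$ directly once $\rho_Q\in H^\alpha$ (see \eqref{LWP proof claim 2}). The more delicate commutator is $[\rho_Q,Q]$, because $Q$ is only in $\textup{Op}$ (not $\mathcal{H}^\alpha$) on its low-low frequency part $\Pi_2^-Q\Pi_2^-$; the paper splits $Q=\Pi_2^-Q\Pi_2^-+\tilde Q$ and handles the two pieces separately in Lemma \ref{LWP lemma}.
\end{itemize}
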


Crucial ingredients in the proof of this local well-posedness result are new Strichartz estimates for the density 
function $\rho$ which we prove in Section \ref{sec-strichartz} and in Proposition \ref{local-in-time Strichartz}.

\bigskip

{\bf Step 2:} In Section \ref{Conservation of the relative energy at 0T}, we prove that 
the solutions constructed in Theorem \ref{LWP0} preserve the relative energy. More precisely, we prove the following: 

\begin{theorem}[Conservation of the relative energy]\label{relative energy conservation0}
Let $d=2,3$. Let $Q(t)\in\mathfrak{Y}^1(I)$ be the solution to the equation \eqref{NLS} with initial data $Q_0\in\mathcal{K}$ (constructed in Theorem \ref{LWP0}). Then,
\begin{equation}
\mathcal{E}(Q(t))=\mathcal{E}(Q_0),\quad\forall t\in I.
\end{equation}
\end{theorem}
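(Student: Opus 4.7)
The plan is to follow the standard density / regularization argument used for energy conservation in the energy-space theory of dispersive equations (see \cite{Caz}): first verify $\mathcal{E}(Q_n(t))\equiv\mathcal{E}(Q_{0,n})$ by direct time differentiation on a sequence of regularized solutions, then upgrade the resulting one-sided inequality for the limit $Q$ to equality by exploiting time reversibility of the NLS system.

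\smallskip

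\emph{Regularization.} I would approximate $Q_0$ by a sequence $\{Q_{0,n}\}\subset\mathfrak{X}^2\cap\mathcal{K}$ of finite-rank smooth operators with $Q_{0,n}\to Q_0$ in $\mathcal{X}$ and $\mathcal{E}(Q_{0,n})\to\mathcal{E}(Q_0)$; this is arranged by spectrally truncating $\gamma_0=\Pi_1^-+Q_0$ with respect to $-\Delta$ (which preserves $0\leq\gamma\leq 1$), combined with the defining approximation property of $\textup{Tr}_0$ recalled from Lemma~3.2 of \cite{FLLS1}. By Theorem~\ref{LWP0} with $\alpha=2$, each $Q_{0,n}$ generates a local solution $Q_n\in\mathfrak{Y}^2$, and because the $\mathfrak{X}^1$-data converge, the contraction used to prove Theorem~\ref{LWP0} produces a common time interval $I$ on which $Q_n\to Q$ in $\mathfrak{Y}^1(I)$ by continuous dependence.

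\smallskip

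\emph{Conservation for the regularized flow, and the limit.} For $Q_n\in\mathfrak{Y}^2$ the relative kinetic energy coincides with an ordinary trace, so one may differentiate in time and use the Hamiltonian structure of the system: expanding
\[
i\partial_t Q_n = [-\Delta,Q_n]+[\rho_{Q_n},\Pi_1^-+Q_n],
\]
cyclicity of the trace combined with $[-\Delta,\Pi_1^-]=0$ and the continuity-type identity $\partial_t\rho_{Q_n}(x)=-i[-\Delta,Q_n](x,x)$ shows that the time derivative of the kinetic term exactly cancels that of the potential term, so $\mathcal{E}(Q_n(t))=\mathcal{E}(Q_{0,n})$ throughout $I$. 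Letting $n\to\infty$, the convergence $\rho_{Q_n}\to\rho_Q$ in $C_t(I;L^2)$ (which is built into the $\mathfrak{Y}^1$ norm) gives $\int\rho_{Q_n(t)}^2\,dx\to\int\rho_{Q(t)}^2\,dx$, while $\textup{Tr}_0(-\Delta-1)(\cdot)$ is only weakly lower semicontinuous under $\mathcal{X}$-convergence; together this yields
\[
\mathcal{E}(Q(t))\;\leq\;\liminf_{n\to\infty}\mathcal{E}(Q_n(t))\;=\;\mathcal{E}(Q_0)\qquad\text{for all }t\in I.
\]

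\smallskip

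\emph{Time reversibility and main obstacle.} Since \eqref{NLS} is invariant under $(t,Q)\mapsto(-t,\overline{Q})$, running the same inequality backward in time starting from $Q(t)$ as initial datum gives $\mathcal{E}(Q_0)\leq\mathcal{E}(Q(t))$, and the two bounds combine into the desired equality. The step I expect to be the main obstacle is the lower-semicontinuity of the kinetic term: $\textup{Tr}_0(-\Delta-1)Q$ is defined through the splitting $Q^{\pm\pm}$ and a Schatten-$1$ limit, so one must verify that $\mathcal{X}$-convergence together with the sign constraint $0\leq \Pi_1^-+Q_n(t)\leq 1$ (which fixes the signs of $Q_n^{\pm\pm}$) is enough to apply a Fatou-type inequality in $\mathfrak{S}^1$. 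Propagation of the pointwise constraint $0\leq\gamma(t)\leq 1$ along the flow (first checked on the smooth approximations $Q_n$ and then transferred to $Q$ by weak limits) is the prerequisite that makes this reversibility argument usable.
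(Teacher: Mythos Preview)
Your overall strategy --- approximate by regular data, prove exact conservation for the regular solutions by direct differentiation, pass to the limit via lower semicontinuity of the kinetic term, then close by time reversal --- is the same as the paper's. The issues are in how you set up the regular approximation.

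First, you invoke Theorem~\ref{LWP0} with $\alpha=2$ to place $Q_n$ in $\mathfrak{Y}^2$, but the $\mathfrak{Y}^\alpha$ spaces are built on the Hilbert--Schmidt scale $\mathcal{H}^\alpha=\{\|\langle\nabla\rangle^\alpha Q\langle\nabla\rangle^\alpha\|_{\mathfrak{S}^2}<\infty\}$ (and only control $\Pi_2^+ Q$ at that). Your claim that ``for $Q_n\in\mathfrak{Y}^2$ the relative kinetic energy coincides with an ordinary trace, so one may differentiate in time'' is not justified: the cyclicity manipulations that kill $\textup{Tr}\,(-\Delta-1)[-\Delta,Q_n]$ etc.\ require $(-\Delta-1)(-\Delta)Q_n\in\mathfrak{S}^1$, and $\mathfrak{Y}^2$ only gives $\mathfrak{S}^2$. (Finite rank is \emph{not} propagated either, since the forcing $[\rho_{Q_n},\Pi^-]$ has infinite rank.) The paper instead introduces the genuinely trace-class space $\mathfrak{H}^2=\{\|\langle\nabla\rangle^2 Q\langle\nabla\rangle^2\|_{\mathfrak{S}^1}<\infty\}$, proves a separate LWP there (Theorem~\ref{trace LWP}), and carries out the differentiation in that setting (Proposition~\ref{conservation law for regular solutions}).

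Second, you obtain a common interval $I$ from the $\mathfrak{X}^1$-convergence of the data, which is correct at the $\mathfrak{Y}^1$ level, but the differentiation argument needs the \emph{regular} solutions $Q_n$ to persist on that same $I$. The lifespan produced by the high-regularity contraction depends on $\|Q_{0,n}\|_{\mathfrak{X}^2}$ (resp.\ $\|Q_{0,n}\|_{\mathfrak{H}^2}$), which is not uniformly bounded in $n$. The paper closes this gap with a separate persistence-of-regularity lemma (the ``Existence time'' lemma in Section~\ref{Conservation of the relative energy at 0T}): it bootstraps the density smoothing $\rho_Q\in L^2_t H^{3/2-\eta}$ built into $\mathfrak{Y}^1$ up to $\rho_Q\in L^2_t H^2$, and then a Gronwall argument on the Duhamel formula keeps $\|Q_n(t)\|_{\mathfrak{H}^2}$ finite on the whole $\mathfrak{Y}^1$-interval.

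Once these two ingredients are in place, the rest of your outline --- lower semicontinuity of $\textup{Tr}_0(-\Delta-1)(\cdot)$, $C_tL^2$ convergence of $\rho_{Q_n}$ from the $\mathfrak{Y}^1$ norm, and the backward-in-time run --- matches the paper exactly.
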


\noindent
The proof of Theorem \ref{relative energy conservation0} can be summarized as follows: 

\begin{enumerate} 

\item  To begin with, we approximate the initial data $Q_0\in\mathcal{K}$ 
by a sequence $Q^{(n)}\in \mathfrak{H}^2$ of very regular kernels, where
\begin{equation}\label{trace H2}
\|Q^{(n)}\|_{\mathfrak{H}^2}:=\|\la\nabla\ra^2Q^{(n)}\la\nabla\ra^2\|_{\mathfrak{S}^1}.
\end{equation}

\vspace{0.2in}
 
\item The local well-posedness for such regular solutions is established in Section \ref{sec-H2encons}.  
For the precise statement see Theorem \ref{trace LWP},
which gives the existence of a
unique solution $Q\in C_t(I;\mathfrak{H}^2)$ to the equation \eqref{NLS}  for initial data $Q_0\in\mathfrak{H}^2$.

\vspace{0.2in}

\item Owing to their $\mathfrak{H}^2$ regularity, energy conservation 
for such solutions can be proved in a straightforward manner, see Section \ref{sec-H2encons}. Hence, one obtains 
that every $Q(t)\in C_t(I;\mathfrak{H}^2)$ which is a solution to the equation \eqref{NLS} for initial data $Q_0$,
\begin{equation}
\mathcal{E}(Q(t))=\mathcal{E}(Q_0),\quad\forall t\in I
\end{equation}
is satisfied.
This is formulated precisely in  Proposition \ref{conservation law for regular solutions}.

\vspace{0.2in}

\item The last step in the proof of Theorem \ref{relative energy conservation0}, which is carried out in 
Section \ref{Conservation of the relative energy at 0T}, consists of showing that 
\begin{equation}
\begin{aligned}
\mathcal{E}(Q(t))&\leq\liminf_{n\to\infty}\mathcal{E}(Q^{(n)}(t))
=\liminf_{n\to\infty}\mathcal{E}(Q^{(n)}(0)) =\mathcal{E}(Q_0).
\end{aligned}
\end{equation}
Repeating the same argument backwards in time, we obtain $\mathcal{E}(Q_0)\leq \mathcal{E}(Q(t))$. 
Thus, we conclude that $\mathcal{E}(Q_0)= \mathcal{E}(Q(t))$.

  \end{enumerate} 

{\bf Step 3:} Finally, in Section \ref{sec-loctoglob}
we conclude the proof of Theorem \ref{main theorem}
by showing that the local solution constructed in Theorem \ref{LWP0}, which satisfies conservation of relative energy, 
can be extended for all times.

 \section{Strichartz estimates}  \label{sec-strichartz}

In this section, we present the main tools to prove local well-posedness of the NLS system  \eqref{NLS} are new Strichartz estimates for operator kernels and 
for density functions. 
The latter are obtained in this paper using space-time Fourier transform techniques, which were instrumental in obtaining bilinear Strichartz estimates
in the context of dispersive \cite{B1, B2} and wave \cite{KM1} equations, and
more recently,
in the context of the Gross-Pitaevskii hierarchy \cite{KM2, CP2, Xie} (which is an infinite hierarchy 
of coupled linear PDEs that describes the dynamics of infinitely many bosons; it appears in the derivation of the nonlinear Schr\"{o}dinger equation from quantum many body systems). 
 
We note that recently, in \cite{FLLS2}, Frank, Lewin, Lieb and Seiringer established Strichartz estimates for density functions of the form
\begin{equation}\label{Strichartz with no derivative}
\|\rho_{e^{it\Delta}\gamma_0e^{-it\Delta}}\|_{L_t^p(\mathbb{R}; L^q)}\lesssim \|\gamma_0\|_{\mathfrak{S}^{\frac{2q}{q+1}}},
\end{equation}
where $p,q,d\geq 1$, $1\leq q\leq\frac{d+2}{d}$ and $\frac{2}{p}+\frac{d}{q}=d$. In particular, when $\gamma_0=\sum_{j=1}^N|\phi_j\ra\la\phi_j|$ is a projection operator with an orthonormal set $\{\phi_j\}_{j=1}^N$ in $L^2$, the inequality \eqref{Strichartz with no derivative} can be read as
\begin{equation}\label{Strichartz with no derivative, example}
\Big\|\sum_{j=1}^N|e^{it\Delta}\phi_j|^2\Big\|_{L_t^p(\mathbb{R}; L^q)}\lesssim N^{\frac{q+1}{2q}}.
\end{equation}
An important feature of \eqref{Strichartz with no derivative, example} is that it improves summability compared to the trivial consequence of Strichartz estimates for the Schr\"odinger flow $e^{it\Delta}$, 
\begin{equation}
\Big\|\sum_{j=1}^N|e^{it\Delta}\phi_j|^2\Big\|_{L_t^p(\mathbb{R}; L^q)}\leq\sum_{j=1}^N\|e^{it\Delta}\phi_j\|_{L_t^{2p}(\mathbb{R}; L^{2q})}^2\lesssim\sum_{j=1}^N\|\phi_j\|_{L_x^2}^2= N.
\end{equation}
Indeed, the exponent $\frac{q+1}{2q}$ on the right hand side of \eqref{Strichartz with no derivative, example} is strictly less than one unless $q=1$. In this sense, the Strichartz inequality \eqref{Strichartz with no derivative} is a generalization of the kinetic energy inequality
\begin{equation}\label{Lieb-Thirring}
\int_{\mathbb{R}^d}\Big\{\sum_{j=1}^N|\phi_j(x)|^2\Big\}^{1+\frac{2}{d}}dx\lesssim \int_{\mathbb{R}^d}\sum_{j=1}^N|\nabla \phi_j(x)|^2dx,
\end{equation}
which is dual to the famous Lieb-Thirring inequality \cite{LT1, LT2}. Later, in \cite{FS}, Frank and Sabin extended \eqref{Strichartz with no derivative} to the optimal range of $q$, that is, $1\leq q<\frac{d+1}{d-1}$. In  \cite{LS1, LS2}, Lewin and Sabin employed these Strichartz estimates in their study on the Hartree equation for infinitely many fermions.

In this section, we introduce a different kind of Strichartz estimates for density functions of the form
\begin{equation}\label{Strichartz with derivative}
\||\nabla|^{\frac{1}{2}}\rho_{e^{it\Delta}\gamma_0e^{-it\Delta}}\|_{L_t^2(\mathbb{R};H^{\alpha_1})}\lesssim\|\la\nabla\ra^\alpha\gamma_0\la\nabla\ra^\alpha\|_{\mathfrak{S}^2}
\end{equation}
(see Theorem \ref{Strichartz estimates for density functions}). We remark that compared to the Strichartz estimates \eqref{Strichartz with no derivative}, there is more gain in summability, equivalently a larger Schatten exponent ($2>\frac{2q}{q+1}$) on the right hand side, assuming more regularity on operators. Moreover, there is an improvement in regularity on the density function $\rho_{e^{it\Delta}\gamma_0e^{-it\Delta}}$. Indeed, $\alpha_1+\frac{1}{2}>\alpha$ in Theorem \ref{Strichartz estimates for density functions}. This fact will play a crucial role in proving our main theorem (see Lemma 5.4). The proof of Strichartz estimates \eqref{Strichartz with derivative} is based on the space-time Fourier transform, and is inspired by the proof of bilinear Strichartz estimates in Klainerman and Machedon \cite{KM1, KM2}, Bourgain \cite{B1, B2} , Chen and Pavlovi\'{c} \cite{CP2} and Xie \cite{Xie}. 

Another new ingredient in this article is Strichartz estimates for operator kernels (see Theorem \ref{Strichartz estimates for operator kernels}), which enjoy the smoothing property of the Schr\"odinger flow $e^{it\Delta}\gamma_0e^{-it\Delta}$ from the Hilbert-Schmidt operators. Although they are quite natural for dispersive equations in the Heisenberg picture, to the best of the authors' knowledge, it is the first time that this kind of Strichartz estimates appear in the literature.

\subsection{Strichartz estimates for operator kernels}
For $\alpha\geq 0$, we define the \textit{Hilbert-Schmidt Sobolev space} $\mathcal{H}^\alpha$ by the collection of Hilbert-Schmidt operators, which are not necessarily self-adjoint, with the norm
\begin{equation}
\|\gamma_0\|_{\mathcal{H}^\alpha}:=\|\la\nabla\ra^\alpha\gamma_0\la\nabla\ra^\alpha\|_{\mathfrak{S}^2}=\|\la\nabla_x\ra^\alpha\la\nabla_{x'}\ra^\alpha \gamma_0(x,x')\|_{L_x^2L_{x'}^2},
\end{equation}
where $\gamma_0(x,x')$ is the integral kernel of $\gamma_0$, i.e.,  
\begin{equation}
(\gamma_0 g)(x)=\int_{\mathbb{R}^d}\gamma_0(x,x')g(x')dx'.
\end{equation}
We say that an exponent pair $(q,r)$ is \textit{admissible} if $2\leq q,r\leq\infty$, $(q,r,d)\neq (2,\infty,2)$ and 
\begin{equation}
\frac{2}{q}+\frac{d}{r}=\frac{d}{2}.
\end{equation}
For a time-dependent operator $\gamma(t)$ on an interval $I\subset\mathbb{R}$, we define its Strichartz norm by
\begin{equation}
\begin{aligned}\label{eq-StrichNorm-def-1}
\|\gamma(t)\|_{\mathcal{S}^\alpha(I)}:=\sup_{(q,r)\textup{: admissible}}&
\Big\{\|\la\nabla_x\ra^\alpha\la\nabla_{x'}\ra^\alpha\gamma(t,x,x')\|_{L_{t}^q(I; L_x^rL_{x'}^2)}\\
&+\|\la\nabla_x\ra^\alpha\la\nabla_{x'}\ra^\alpha\gamma(t,x,x')\|_{L_{t}^q (I;L_{x'}^rL_{x}^2)}\Big\}.
\end{aligned}
\end{equation}
It is obvious that $\mathcal{S}^\alpha(I) \hookrightarrow L_t^\infty(I;\mathcal{H}^\alpha)$.

We identify the operator $e^{it\Delta}\gamma_0 e^{-it\Delta}$ with its integral kernel
\begin{equation}
(e^{it\Delta}\gamma_0 e^{-it\Delta})(x,x')=(e^{it(\Delta_x-\Delta_{x'})}\gamma_0)(x,x').
\end{equation}
Then, as a function, the dispersive estimate for the linear propagator, one of whose spatial variables is frozen, yields the following Strichartz estimates.
 
\begin{theorem}[Strichartz estimates for operator kernels]\label{Strichartz estimates for operator kernels}
Let $I\subset\mathbb{R}$. Then, we have
\begin{equation}
\begin{aligned}
\|e^{it\Delta}\gamma_0e^{-it\Delta}\|_{\mathcal{S}^\alpha(\mathbb{R})}&\lesssim\|\gamma_0\|_{\mathcal{H}^\alpha},\\
\Big\|\int_0^t e^{i(t-s)\Delta}R(s)e^{-i(t-s)\Delta}ds\Big\|_{\mathcal{S}^\alpha(\mathbb{R})}&\lesssim\|R(t)\|_{L_t^1(\mathbb{R};\mathcal{H}^\alpha)}.
\end{aligned}
\end{equation}
\end{theorem}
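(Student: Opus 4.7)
My plan is to reduce both estimates to the standard (scalar) homogeneous Strichartz inequalities for $e^{it\Delta}$ on $\mathbb{R}^d$, by exploiting a partial Fourier transform in one spatial variable to neutralize one of the two Schr\"odinger propagators acting on the kernel. Since $\langle\nabla_x\rangle^\alpha$ and $\langle\nabla_{x'}\rangle^\alpha$ are Fourier multipliers that commute with both $e^{it\Delta_x}$ and $e^{-it\Delta_{x'}}$, I first set $\widetilde{\gamma}_0 := \langle\nabla_x\rangle^\alpha \langle\nabla_{x'}\rangle^\alpha \gamma_0$ and reduce to the case $\alpha=0$, with $\|\widetilde{\gamma}_0\|_{L^2_x L^2_{x'}} = \|\gamma_0\|_{\mathcal{H}^\alpha}$. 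The task then becomes: for every admissible pair $(q,r)$, establish
\begin{equation*}
\bigl\| \bigl(e^{it(\Delta_x - \Delta_{x'})}\widetilde{\gamma}_0\bigr)(x,x') \bigr\|_{L^q_t(\mathbb{R};\, L^r_x L^2_{x'})} \lesssim \|\widetilde{\gamma}_0\|_{L^2_x L^2_{x'}}
\end{equation*}
together with its $x \leftrightarrow x'$ counterpart.

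The heart of the argument is to Fourier-transform in $x'$ only. Writing $\widehat{\widetilde{\gamma}_0}(y,\eta)$ for the partial Fourier transform in the second variable, $e^{-it\Delta_{x'}}$ becomes multiplication by the unimodular phase $e^{it|\eta|^2}$, while $e^{it\Delta_x}$ commutes with this transform; hence
\begin{equation*}
\bigl(e^{it(\Delta_x - \Delta_{x'})}\widetilde{\gamma}_0\bigr)(x,x') \;=\; c_d \int_{\mathbb{R}^d} e^{ix'\cdot\eta}\, e^{it|\eta|^2}\, \bigl(e^{it\Delta_x}\widehat{\widetilde{\gamma}_0}(\cdot,\eta)\bigr)(x)\, d\eta.
\end{equation*}
Plancherel in $x'$ eliminates the phase and gives
\begin{equation*}
\bigl\|\bigl(e^{it(\Delta_x - \Delta_{x'})}\widetilde{\gamma}_0\bigr)(t,x,\cdot)\bigr\|_{L^2_{x'}}^{2} \;=\; c_d^2 \int_{\mathbb{R}^d} \bigl|\bigl(e^{it\Delta_x}\widehat{\widetilde{\gamma}_0}(\cdot,\eta)\bigr)(x)\bigr|^{2}\, d\eta.
\end{equation*}
Because $q,r \geq 2$ by admissibility, Minkowski's inequality permits pulling the $L^2_\eta$ norm outside the $L^q_t L^r_x$ norms. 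The scalar homogeneous Strichartz estimate applied for each fixed $\eta$ to $\widehat{\widetilde{\gamma}_0}(\cdot,\eta)$, followed by Plancherel in $\eta$, then delivers the bound by $\|\widetilde{\gamma}_0\|_{L^2_x L^2_{x'}}$. The partner term $L^q_t L^r_{x'} L^2_x$ is handled identically after swapping $x$ and $x'$, i.e., transforming in $x$ and treating $e^{it\Delta_x}$ as the phase.

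The inhomogeneous estimate is a direct consequence of the homogeneous one via Minkowski's integral inequality, with no need for Christ--Kiselev since the input norm is $L^1_t$: for each admissible $(q,r)$,
\begin{equation*}
\Bigl\| \int_0^t e^{i(t-s)\Delta} R(s) e^{-i(t-s)\Delta}\, ds \Bigr\|_{L^q_t L^r_x L^2_{x'}} \;\leq\; \int_{\mathbb{R}} \bigl\| \mathbf{1}_{\{s \leq t\}}\, e^{i(t-s)\Delta} R(s) e^{-i(t-s)\Delta}\bigr\|_{L^q_t L^r_x L^2_{x'}}\, ds,
\end{equation*}
and time-translation invariance together with the homogeneous estimate applied pointwise in $s$ to $R(s)$ produces the bound $\int_{\mathbb{R}} \|R(s)\|_{\mathcal{H}^\alpha}\, ds$; taking the supremum over admissible pairs completes the claim. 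The main obstacles I anticipate are mostly bookkeeping: confirming that the partial Fourier transform legitimately commutes with the propagators on the relevant function classes (clear for Schwartz integral kernels, extended by density to $\mathcal{H}^\alpha$), and handling the endpoint admissible pair $(q,r) = (2, \tfrac{2d}{d-2})$ for $d=3$, which invokes the Keel--Tao endpoint scalar Strichartz inequality as a black box.
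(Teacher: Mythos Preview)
Your argument is correct and takes a genuinely different route from the paper's proof. The paper observes that $e^{it(\Delta_x-\Delta_{x'})}$ satisfies a vector-valued dispersive estimate $L^1(L^2_{x'})\to L^\infty(L^2_{x'})$ with decay $|t|^{-d/2}$ (obtained from the scalar dispersive bound for $e^{it\Delta_x}$, unitarity of $e^{-it\Delta_{x'}}$ on $L^2_{x'}$, and Minkowski), and then invokes the abstract Keel--Tao theorem with Banach spaces $H=B_0=L^2(L^2_{x'})$, $B_1^*=L^\infty(L^2_{x'})$. Your approach instead freezes the $x'$-frequency by a partial Fourier transform, so that one propagator becomes a harmless unimodular phase; Plancherel in $x'$ plus Minkowski (using $q,r\geq 2$) then reduces everything to the \emph{scalar} Strichartz inequality applied fiberwise in $\eta$. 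Both arguments are short; yours is a reduction to the scalar Strichartz black box, while the paper's is a direct re-application of the Keel--Tao machinery in a vector-valued setting. The paper's route would in principle also yield inhomogeneous estimates with dual-admissible input norms rather than just $L^1_t$, but only the $L^1_t$ version is stated and needed, and for that your Minkowski-in-$s$ argument is equivalent.
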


\begin{proof}
By symmetry, it suffices to show that 
\begin{equation}\label{Strichartz estimates for operator kernels proof}
\begin{aligned}
\|e^{it(\Delta_x-\Delta_{x'})}\gamma_0\|_{L_{t}^q (\mathbb{R};L_x^rL_{x'}^2)}&\lesssim\|\gamma_0\|_{L_x^2L_{x'}^2},\\
\Big\|\int_0^t e^{i(t-s)(\Delta_x-\Delta_{x'})}R(s)ds\Big\|_{L_{t}^q(\mathbb{R}; L_x^rL_{x'}^2)}&\lesssim\|R(t)\|_{L_{t}^1(\mathbb{R}; L_x^2L_{x'}^2)}.
\end{aligned}
\end{equation}
We denote by $L^p(L^2)$ the collection of single variable $L^p$-functions $f(x,\cdot): \mathbb{R}^d\to L^2$, which is identified with the collection of two spatial variable $L_x^pL_{x'}^2$-functions. It is obvious that by unitarity, 
\begin{equation}
\|e^{it(\Delta_x-\Delta_{x'})}\gamma_0\|_{L^2(L^2)}=\|e^{it(\Delta_x-\Delta_{x'})}\gamma_0\|_{L_x^2L_{x'}^2}=\|\gamma_0\|_{L_x^2L_{x'}^2}=\|\gamma_0\|_{L^2(L^2)}.
\end{equation}
On the other hand, by unitarity of the linear propagator $e^{-it\Delta_{x'}}$, the dispersive estimate $\|e^{it\Delta_x}\|_{L_x^1\to L_{x}^\infty}\lesssim|t|^{-d/2}$ and the Minkowski inequality, we get
\begin{equation}
\begin{aligned}
\|e^{it(\Delta_x-\Delta_{x'})}\gamma_0\|_{L^\infty (L^2)}&=\|e^{it(\Delta_x-\Delta_{x'})}\gamma_0\|_{L_x^\infty L_{x'}^2}=\|e^{it\Delta_x}\gamma_0\|_{L_x^\infty L_{x'}^2}\\
&\leq \|e^{it\Delta_x}\gamma_0\|_{L_{x'}^2L_x^\infty}\lesssim|t|^{-d/2}\|\gamma_0\|_{L_{x'}^2L_x^1}\\
&\leq|t|^{-d/2}\|\gamma_0\|_{L_{x}^1L_{x'}^2}=|t|^{-d/2}\|\gamma_0\|_{L^1(L^2)}.
\end{aligned}
\end{equation}
Then, \eqref{Strichartz estimates for operator kernels proof} follows from the abstract version of Strichartz estimates (Theorem 10.1 in Keel and Tao \cite{KT}) with $B_0=L^2(L^2)$, $B_1^*=L^\infty (L^2)$, $H=L^2(L^2)$ and $\sigma=\frac{d}{2}$.
\end{proof}

\begin{remark}
The proof of Theorem \ref{Strichartz estimates for operator kernels} relies on the fact that an operator $\gamma$ in $\mathfrak{S}^2$ can be identified with its kernel $\gamma(x,x')$ as a function in $L_{x}^2 L_{x'}^2$. An interesting open question is to derive similar Strichartz estimates for operators in different Schatten classes.
\end{remark}

\subsection{Strichartz estimates for density functions}
Next, we establish the Strichartz estimates for density functions $\rho_{e^{it\Delta}\gamma_0e^{-it\Delta}}$. 
\begin{theorem}[Strichartz estimates for density functions]\label{Strichartz estimates for density functions}
Suppose that
\begin{equation}\label{alpha}
\left\{\begin{aligned}
&\alpha\geq0&&\text{ when }d=1,\\
&\alpha>\tfrac{d-1}{4}&&\text{ when }d\geq2,
\end{aligned}
\right.
\end{equation}
and
\begin{equation}\label{alpha_1}
\left\{\begin{aligned}
&\alpha_1=\alpha&&\text{ when }d=1,\\
&\alpha_1=2\alpha-\tfrac{d-1}{2}&&\text{ when }d\geq 2\textup{ and }\tfrac{d-1}{4}<\alpha<\tfrac{d-1}{2},\\
&\alpha_1<\tfrac{d-1}{2}&&\text{ when }d\geq 2\textup{ and }\alpha=\tfrac{d-1}{2},\\
&\alpha_1=\alpha&&\text{ when }d\geq 2\textup{ and }\alpha>\tfrac{d-1}{2}.
\end{aligned}
\right.
\end{equation}
$(i)$ (Homogeneous Strichartz estimate)
\begin{equation}\label{homogeneous Strichartz estimates for density functions}
\||\nabla|^{\frac{1}{2}}\rho_{e^{it\Delta}\gamma_0e^{-it\Delta}}\|_{L_t^2(\mathbb{R};H^{\alpha_1})}\lesssim\|\gamma_0\|_{\mathcal{H}^\alpha}.
\end{equation}
$(ii)$ (Inhomogeneous Strichartz estimate)
\begin{equation}\label{inhomogeneous Strichartz estimates for density functions}
\Big\||\nabla|^{\frac{1}{2}}\rho\Big[ \int_0^t e^{i(t-s)\Delta}R(s) e^{-i(t-s)\Delta}ds \Big]\Big\|_{L_t^2(\mathbb{R};H^{\alpha_1})}\lesssim\|R(t)\|_{L_t^1(\mathbb{R};\mathcal{H}^\alpha)}.
\end{equation}
\end{theorem}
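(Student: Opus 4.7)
\textbf{Proof proposal for Theorem \ref{Strichartz estimates for density functions}.}
The strategy is to exploit the space-time Fourier transform, mirroring the bilinear Strichartz approach of Klainerman--Machedon, Bourgain, and Chen--Pavlovi\'c cited in the introduction. First I would write
\[
\rho_{e^{it\Delta}\gamma_0 e^{-it\Delta}}(t,x)=(e^{it(\Delta_x-\Delta_{x'})}\gamma_0)(x,x),
\]
expand $\gamma_0$ in a two-variable Fourier representation, substitute $\xi=\eta+k$ (so that $|\xi|^2-|\eta|^2=2k\cdot\eta+|k|^2$), and apply the time Fourier transform. This produces a delta function that localizes the $\eta$-integral to the $(d-1)$-dimensional hyperplane $\Sigma(\tau,k):=\{\eta\in\mathbb{R}^d:2k\cdot\eta=\tau-|k|^2\}$, giving
\[
\tilde\rho(\tau,k)=\frac{c_d}{|k|}\int_{\Sigma(\tau,k)}\hat\gamma_0(\eta+k,\eta)\,d\sigma(\eta),
\]
where $d\sigma$ is the induced surface measure.

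By Plancherel in $(t,x)$, the left-hand side of \eqref{homogeneous Strichartz estimates for density functions} is $\sim\int\la k\ra^{2\alpha_1}|k|\,|\tilde\rho(\tau,k)|^2\,d\tau\,dk$. Inserting the weights $\la\eta+k\ra^\alpha\la\eta\ra^\alpha$ and their reciprocals into the surface integral for $\tilde\rho$ and applying Cauchy--Schwarz on $\Sigma(\tau,k)$, then invoking the coarea identity $\int d\tau\int_{\Sigma(\tau,k)}(\cdot)\,d\sigma=2|k|\int_{\mathbb{R}^d}(\cdot)\,d\eta$ to integrate the ``good'' factor back to the full Schatten norm $\|\gamma_0\|_{\mathcal{H}^\alpha}^2$ after the change $\xi=\eta+k$, reduces the homogeneous estimate to the pointwise bound
\[
\la k\ra^{2\alpha_1}\sup_{\tau\in\mathbb{R}}I(\tau,k)\lesssim 1,\qquad I(\tau,k):=\int_{\Sigma(\tau,k)}\la\eta+k\ra^{-2\alpha}\la\eta\ra^{-2\alpha}\,d\sigma(\eta).
\]

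The main technical obstacle is this weighted hyperplane integral. Decomposing $\eta=c\hat k+\eta_\perp$ with $c=(\tau-|k|^2)/(2|k|)$ fixed and setting $A=1+c^2$, $B=1+(c+|k|)^2$, the surface integral becomes
\[
I(\tau,k)=\int_{\mathbb{R}^{d-1}}(A+|\eta_\perp|^2)^{-\alpha}(B+|\eta_\perp|^2)^{-\alpha}\,d\eta_\perp,
\]
which converges at infinity precisely when $\alpha>(d-1)/4$ (for $d\ge 2$), matching the hypothesis \eqref{alpha}. A case split of the radial integration at the scales $\sqrt{\min(A,B)}$ and $\sqrt{\max(A,B)}$ and comparison of the three pieces yields
\[
I(\tau,k)\sim\begin{cases}\max(A,B)^{(d-1)/2-2\alpha}&\text{if }(d-1)/4<\alpha<(d-1)/2,\\ \min(A,B)^{(d-1)/2-\alpha}\max(A,B)^{-\alpha}&\text{if }\alpha>(d-1)/2.\end{cases}
\]
Taking the supremum in $c$ and using that $\min_c\max(A,B)\sim\la k\ra^2$ is attained at $c=-|k|/2$, while $\min_c\min(A,B)\sim 1$ is attained at $c=0$ or $c=-|k|$, distinguishes the three regimes in \eqref{alpha_1}: in the medium range $\sup_\tau I\sim\la k\ra^{d-1-4\alpha}$, forcing $\alpha_1=2\alpha-(d-1)/2$; in the high-regularity range $\sup_\tau I\sim\la k\ra^{-2\alpha}$, giving $\alpha_1=\alpha$; the critical case $\alpha=(d-1)/2$ introduces a logarithmic defect that permits only $\alpha_1<(d-1)/2$; and for $d=1$ the set $\Sigma$ degenerates to a single point so no decay on the surface is needed and $\alpha_1=\alpha\ge 0$ follows directly by a change of variables in $\tau$.

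Finally, the inhomogeneous estimate is obtained by writing $e^{i(t-s)\Delta}R(s)e^{-i(t-s)\Delta}=e^{it\Delta}\bigl(e^{-is\Delta}R(s)e^{is\Delta}\bigr)e^{-it\Delta}$ and applying Minkowski's integral inequality in $s$ together with the homogeneous bound, using that $\|e^{-is\Delta}R(s)e^{is\Delta}\|_{\mathcal{H}^\alpha}=\|R(s)\|_{\mathcal{H}^\alpha}$ by unitarity of $e^{\pm is\Delta}$ and its commutation with $\la\nabla\ra^\alpha$; the retarded truncation $\int_0^t$ is then recovered by the Christ--Kiselev lemma, which is applicable since $1<2$ in the underlying $L^1_s\to L^2_t$ mapping.
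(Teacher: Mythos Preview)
Your proposal is correct and follows essentially the same route as the paper: both reduce \eqref{homogeneous Strichartz estimates for density functions} via the space-time Fourier transform to the uniform boundedness of the same $(d-1)$-dimensional weighted integral
\[
\la k\ra^{2\alpha_1}\int_{\mathbb{R}^{d-1}}\frac{d\eta_\perp}{\la(c,\eta_\perp)\ra^{2\alpha}\la(c+|k|,\eta_\perp)\ra^{2\alpha}},
\]
and the case analysis in $\alpha$ is the same. The only differences are cosmetic. First, the paper reaches this integral by duality (pairing $\rho$ against a test function $V$ and using Plancherel), whereas you reach it directly by Cauchy--Schwarz on $\Sigma(\tau,k)$ followed by coarea; these are the primal and dual formulations of the same estimate. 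The paper notes that the dual formulation is convenient because it immediately yields the sharpness statement (their Proposition on optimality) with almost no extra work. Second, for the inhomogeneous estimate you invoke Christ--Kiselev, but this is unnecessary: since the target space is $L^2_t$ and the source is $L^1_s$, Minkowski alone already controls the retarded truncation, because for each fixed $s$ one has $\|\mathbf{1}_{\{0\le s\le t\}}\,g(t,s)\|_{L^2_t}\le\|g(t,s)\|_{L^2_t}$. The paper does exactly this and avoids Christ--Kiselev entirely.
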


\begin{proof}
$(i)$: We prove $(i)$ by duality. The advantage of considering a dual inequality is that one can prove optimality with a small modification (see Proposition \ref{optimality} and its proof).

We write the space-time Fourier transform of $\rho_{e^{it\Delta}\gamma_0e^{-it\Delta}}$,
\begin{equation}
\begin{aligned}
(\rho_{e^{it\Delta}\gamma_0e^{-it\Delta}})^{\sim}(\tau,\xi)&=\mathcal{F}_{t,x}\Big\{\frac{1}{(2\pi)^{2d}}\int_{\mathbb{R}^{2d}} e^{-it(|\xi_1|^2-|\xi_2|^2)}\hat{\gamma}_0(\xi_1,\xi_2) e^{ix\cdot(\xi_1+\xi_2)} d\xi_1 d\xi_2\Big\}\\
&=\mathcal{F}_{t}\Big\{\frac{1}{(2\pi)^{d}}\int_{\mathbb{R}^{2d}} e^{-it(|\xi_1|^2-|\xi_2|^2)}\hat{\gamma}_0(\xi_1,\xi_2) \delta(\xi-\xi_1-\xi_2) d\xi_1 d\xi_2\Big\}\\
&=\mathcal{F}_{t}\Big\{\frac{1}{(2\pi)^{d}}\int_{\mathbb{R}^{d}} e^{-it(|\xi_1|^2-|\xi-\xi_1|^2)}\hat{\gamma}_0(\xi_1,\xi-\xi_1) d\xi_1\Big\}\\
&=\frac{1}{(2\pi)^{d-1}}\int_{\mathbb{R}^{d}} \delta(\tau+|\xi_1|^2-|\xi-\xi_1|^2)\hat{\gamma}_0(\xi_1,\xi-\xi_1) d\xi_1.
\end{aligned}
\end{equation}
Then, by the Plancherel theorem,
\begin{equation}
\begin{aligned}
&\int_{\mathbb{R}^{d+1}} (\la\nabla\ra^{\alpha_1}|\nabla|^{\frac{1}{2}}\rho_{e^{it\Delta}\gamma_0e^{-it\Delta}})(x)\overline{V(t,x)} dx dt\\
&=\int_{\mathbb{R}^{d+1}} \big(\la\nabla\ra^{\alpha_1}|\nabla|^{\frac{1}{2}}\rho_{e^{it\Delta}\gamma_0e^{-it\Delta}}\big)^\sim(\tau,\xi)\overline{\tilde{V}(\tau,\xi)} d\xi d\tau\\
&=\frac{1}{(2\pi)^{d-1}}\int_{\mathbb{R}^{2d+1}} \la\xi\ra^{\alpha_1} |\xi|^{\frac{1}{2}}\delta(\tau+|\xi_1|^2-|\xi-\xi_1|^2)\hat{\gamma}_0(\xi_1,\xi-\xi_1) \overline{\tilde{V}(\tau,\xi)} d\xi_1d\xi d\tau\\
&=\frac{1}{(2\pi)^{d-1}}\int_{\mathbb{R}^{2d}} \la\xi\ra^{\alpha_1}|\xi|^{\frac{1}{2}}\hat{\gamma}_0(\xi_1,\xi-\xi_1) \overline{\tilde{V}(-|\xi_1|^2+|\xi-\xi_1|^2,\xi)} d\xi_1d\xi,
\end{aligned}
\end{equation}
where $\tau$ is integrated out in the last identity. Therefore, by duality, \eqref{homogeneous Strichartz estimates for density functions} is equivalent to the inequality 
\begin{equation}\label{dual inequality}
\Big\|\frac{\la\xi\ra^{\alpha_1}|\xi|^{\frac{1}{2}}\tilde{V}(-|\xi_1|^2+|\xi-\xi_1|^2,\xi)}{\la\xi_1\ra^{\alpha}\la\xi-\xi_1\ra^{\alpha}}\Big\|_{L_{\xi,\xi_1}^2}\lesssim \|V\|_{L_{t\in\mathbb{R}}^2 L_x^2}.
\end{equation}

We consider the square of the left hand side of \eqref{dual inequality}, 
\begin{equation}\label{I}
I=\int_{\mathbb{R}^d}\Big\{\int_{\mathbb{R}^d}\frac{\la\xi\ra^{2\alpha_1}|\xi||\tilde{V}(-2\xi\cdot\xi_1+|\xi|^2,\xi)|^2}{\la\xi_1\ra^{2\alpha}\la\xi-\xi_1\ra^{2\alpha}} d\xi_1\Big\}d\xi.
\end{equation}
In the one-dimensional case, we assume that $\alpha_1=\alpha$. Then, $\frac{\la\xi\ra^{2\alpha}}{\la\xi_1\ra^{2\alpha}\la \xi-\xi_1\ra^{2\alpha}}$ is bounded, since either $|\xi_1|$ or $|\xi-\xi_1|$ is greater than or equal to $\frac{|\xi|}{2}$. Thus, changing variables $\tau=-2\xi\xi_1+\xi^2$, we get
\begin{equation}
\begin{aligned}
I&\lesssim\int_{\mathbb{R}}\Big\{\int_{\mathbb{R}}|\xi||\tilde{V}(-2\xi\xi_1+\xi^2,\xi)|^2d\xi_1\Big\}d\xi\\
&=\frac{1}{2}\int_{\mathbb{R}^2}|\tilde{V}(\tau,\xi)|^2d\tau d\xi=\frac{1}{2}\|V\|_{L_{t\in\mathbb{R}}^2L_x^2}^2.
\end{aligned}
\end{equation}
Suppose that $d\geq 2$. Given $\xi\in\mathbb{R}^d$, we introduce new variables $q=q(\xi)=(q_1,\cdots, q_d)\in\mathbb{R}^d$ such that $\xi_1=q_1e_1+\cdots+q_de_d$, where $\{e_j\}_{j=1}^d$ is an orthonormal basis in $\mathbb{R}^d$ with $e_1=\frac{\xi}{|\xi|}$. Then, changing variables by $\xi_1\mapsto q$, we write
\begin{equation}
I=\int_{\mathbb{R}^{2d-1}}\Big\{\int_{\mathbb{R}}\frac{\la\xi\ra^{2\alpha_1}|\xi||\tilde{V}(-2|\xi|q_1+|\xi|^2,\xi)|^2}{\la q\ra^{2\alpha}\la(q_1-|\xi|,q')\ra^{2\alpha}} dq_1\Big\} dq'd\xi,
\end{equation}
where $q'=(q_2,\cdots, q_d)$ is a vector in $\mathbb{R}^{d-1}$. Next, we change variables $\tau=-2|\xi|q_1+|\xi|^2$, and denote $q_1^*=q_1(\tau,\xi)=-\frac{\tau-|\xi|^2}{2|\xi|}$. Then, 
\begin{equation}\label{I'}
\begin{aligned}
I&=\int_{\mathbb{R}^{2d-1}}\Big\{\int_{\mathbb{R}}\frac{\la\xi\ra^{2\alpha_1}|\tilde{V}(\tau,\xi)|^2}{2\la (q_1^*, q')\ra^{2\alpha}\la(q_1^*-|\xi|,q')\ra^{2\alpha}} d\tau\Big\} dq'd\xi\\
&=\int_{\mathbb{R}^{d+1}}\Big\{\int_{\mathbb{R}^{d-1}}\frac{\la\xi\ra^{2\alpha_1}dq'}{2\la (q_1^*,q')\ra^{2\alpha}\la(q_1^*-|\xi|,q')\ra^{2\alpha}} \Big\}|\tilde{V}(\tau,\xi)|^2d\xi d\tau.
\end{aligned}
\end{equation}
Thus, in order to prove \eqref{dual inequality}, it suffices to show that the integral $\{\cdots\}$ on the second line of \eqref{I'} is bounded uniformly in $\tau$ and $\xi$.

Suppose that $|q_1^*|\leq\frac{|\xi|}{2}$ $(\Rightarrow |q_1^*-|\xi||\geq\frac{|\xi|}{2})$. Then, 
\begin{equation}\label{integral 1}
\begin{aligned}
&\int_{|q'|\leq2|\xi|}\frac{\la\xi\ra^{2\alpha_1}dq'}{2\la (q_1^*,q')\ra^{2\alpha}\la(q_1^*-|\xi|,q')\ra^{2\alpha}}\\
&\leq\int_{|q'|\leq2|\xi|}\frac{\la\xi\ra^{2\alpha_1}dq'}{2\la q'\ra^{2\alpha}\la q_1^*-|\xi|\ra^{2\alpha}}\\
&\leq\int_{|q'|\leq2|\xi|}\frac{\la\xi\ra^{2\alpha_1}}{\la q'\ra^{2\alpha}\la \xi\ra^{2\alpha}}dq'=\int_{|q'|\leq2|\xi|}\frac{\la\xi\ra^{2\alpha_1-2\alpha}}{\la q'\ra^{2\alpha}}dq'\\
&\lesssim\left\{\begin{aligned}
&\la\xi\ra^{2\alpha_1-4\alpha+(d-1)}&&\text{ when }\alpha<\tfrac{d-1}{2},\\
&\la\xi\ra^{2\alpha_1-(d-1)}\la \ln|\xi|\ra&&\text{ when }\alpha=\tfrac{d-1}{2},\\
&\la\xi\ra^{2\alpha_1-2\alpha}&&\text{ when }\alpha>\tfrac{d-1}{2}.
\end{aligned}
\right.
\end{aligned}
\end{equation}
On the other hand, if $|q_1^*|\geq\frac{|\xi|}{2}$, then 
\begin{equation}\label{integral 2}
\begin{aligned}
&\int_{|q'|\leq2|\xi|}\frac{\la\xi\ra^{2\alpha_1}dq'}{2\la (q_1^*,q')\ra^{2\alpha}\la(q_1^*-|\xi|,q')\ra^{2\alpha}}\\
&\leq\int_{|q'|\leq2|\xi|}\frac{\la\xi\ra^{2\alpha_1}dq'}{2\la q_1^*\ra^{2\alpha}\la q'\ra^{2\alpha}}\\
&\leq\int_{|q'|\leq2|\xi|}\frac{\la\xi\ra^{2\alpha_1}}{\la \xi\ra^{2\alpha}\la q'\ra^{2\alpha}}dq'=\int_{|q'|\leq2|\xi|}\frac{\la\xi\ra^{2\alpha_1-2\alpha}}{\la q'\ra^{2\alpha}}dq'\\
&\lesssim\left\{\begin{aligned}
&\la\xi\ra^{2\alpha_1-4\alpha+(d-1)}&&\text{ when }\alpha<\tfrac{d-1}{2},\\
&\la\xi\ra^{2\alpha_1-(d-1)}\la \ln|\xi|\ra&&\text{ when }\alpha=\tfrac{d-1}{2},\\
&\la\xi\ra^{2\alpha_1-2\alpha}&&\text{ when }\alpha>\tfrac{d-1}{2}.
\end{aligned}
\right.\end{aligned}
\end{equation}
For \eqref{integral 1}, \eqref{integral 2} to be uniformly bounded  in $\tau$ and $\xi$, 
the exponent $\alpha_1$ in \eqref{alpha_1} must be chosen so that 
\begin{equation}\label{eq-alph1-conds-1}
\begin{aligned}
\left\{\begin{aligned}
&2\alpha_1-4\alpha+(d-1)=0&&\text{ when }\alpha<\tfrac{d-1}{2},\\
&2\alpha_1-(d-1)<0&&\text{ when }\alpha=\tfrac{d-1}{2},\\
&2\alpha_1-2\alpha=0&&\text{ when }\alpha>\tfrac{d-1}{2}.
\end{aligned}
\right.\end{aligned}
\end{equation} 

It remains to estimate the integral $\{\cdots\}$ in \eqref{I'} whose integral domain is restricted to $\{q'\in\mathbb{R}^{d-1}: |q'|\geq 2|\xi|\}$. Now, using that both $|(q_1^*,q')|$ and $|(q_1^*-|\xi|,q')|$ are greater than equal to $|q'|$, we prove that 
\begin{equation}\label{integral 3}
\int_{|q'|\geq2|\xi|}\frac{\la\xi\ra^{2\alpha_1}dq'}{2\la (q_1^*,q')\ra^{2\alpha}\la(q_1^*-|\xi|,q')\ra^{2\alpha}}\leq\int_{|q'|\geq2|\xi|}\frac{\la\xi\ra^{2\alpha_1}}{2\la q'\ra^{4\alpha}}dq'\lesssim \la\xi\ra^{2\alpha_1-4\alpha+(d-1)},
\end{equation}
where in the second inequality, we used the assumption $\alpha>\frac{d-1}{4}$. 
Furthermore, for all of the cases in \eqref{eq-alph1-conds-1}, we have that the exponent on the r.h.s. in \eqref{integral 3} satisfies
\begin{equation}
     2\alpha_1-4\alpha+(d-1)\leq0
\end{equation} 
if concurrently, $\alpha>\frac{d-1}{4}$ holds.
Therefore, we conclude that the above integrals, \eqref{integral 1}, \eqref{integral 2} and \eqref{integral 3}, are bounded uniformly in $\tau$ and $\xi$.

This establishes \eqref{dual inequality} which is equivalent 
to \eqref{homogeneous Strichartz estimates for density functions}.

$(ii)$: By the Minkowski inequality and $(i)$, we prove that
\begin{equation}
\begin{aligned}
&\Big\||\nabla|^{\frac{1}{2}}\rho\Big[ \int_0^t e^{i(t-s)\Delta}R(s) e^{-i(t-s)\Delta}ds \Big]\Big\|_{L_t^2(\mathbb{R};H^{\alpha_1})}\\
&\leq\int_\mathbb{R} \||\nabla|^{\frac{1}{2}}\rho_{e^{it\Delta}(e^{-is\Delta}R(s)e^{is\Delta}) e^{-it\Delta}}\|_{L_t^2(\mathbb{R}_t;H^{\alpha_1})}ds\\
&\lesssim\int_\mathbb{R} \|e^{-is\Delta}R(s)e^{is\Delta}\|_{\mathcal{H}^\alpha}ds=\int_\mathbb{R} \|R(s)\|_{\mathcal{H}^\alpha}ds=\|R(t)\|_{L_t^1(\mathbb{R};\mathcal{H}^\alpha)}.
\end{aligned}
\end{equation}
This completes the proof.
\end{proof}

Now, we may prove optimality of the homogeneous Strichartz estimate (Theorem \ref{Strichartz estimates for density functions} $(i)$). Precisely, we will show that \eqref{homogeneous Strichartz estimates for density functions} fails in the case $d\geq 2$ and $\alpha=\frac{d-1}{4}$, or the case $d\geq 2$ and $\alpha=\alpha_1=\frac{d-1}{4}$. Even more than that, we will show that in the first case, it is possible that $\rho_{e^{it\Delta}\gamma_0e^{-it\Delta}}$ is not even a distribution (see \eqref{eq: Strichartz-}). %%We remark that proving failure of the inequality in a very weak norm is not just for a mathematical curiosity. Indeed, if one tries to prove global well-posedness of the three-dimensional Hartree equation of infinitely many particles with a sufficiently nice convoluted potential $w$ at positive temperature ``via a straightforward contraction mapping argument" as in later sections, one needs a Strichartz estimate of the form
%%\begin{equation}\label{Hartree failure}
%%\||\nabla|^{\frac{1}{2}}(w*\rho_{e^{it\Delta}\gamma_0e^{-it\Delta}})\|_{X}\lesssim\|\gamma_0\|_{\mathcal{H}^{\frac{1}{2}}}
%%\end{equation}
%%with a suitable norm $\|\cdot\|_X$. Here, the $\mathcal{H}^{\frac{1}{2}}$-norm of $\gamma_0$ is taken on the right hand side of \eqref{Hartree failure}, since so far, it is known that the relative entropy $\mathcal{H}(\gamma,\gamma_f)$ controls just the $\mathcal{H}^{\frac{1}{2}}$-norm of $\gamma-\gamma_f$. However, Proposition \ref{optimality} tells us that the inequality \eqref{Hartree failure} is not true, and thus it is not easy to obtain global well-posedness by the argument in this article.
We denote by $(C_0^\infty)'$ the space of distributions, defined on $\mathbb{R}^{d+1}=\mathbb{R}_t\times\mathbb{R}_x^d$, that is dual to the space $C_0^\infty=C_0^\infty(\mathbb{R}^{d+1})$ of compactly supported smooth functions.

\begin{proposition}[Optimality of the inequality \eqref{homogeneous Strichartz estimates for density functions}]\label{optimality}
Let $d\geq 2$. Then,
\begin{equation}\label{eq: Strichartz-}
\sup_{\|\gamma_0\|_{\mathcal{H}^{\frac{d-1}{4}}}=1}
\frac{\big\||\nabla_x|^{\frac{1}{2}}\rho_{e^{it\Delta}\gamma_0e^{-it\Delta}}\big\|_{(C_0^\infty)'}}{\|\gamma_0\|_{\mathcal{H}^{\frac{d-1}{4}}}}=\infty
\end{equation}
and 
\begin{equation}\label{eq: Strichartz- 2}
\sup_{\|\gamma_0\|_{\mathcal{H}^{\frac{d-1}{2}}}=1}
\frac{\big\||\nabla_x|^{\frac{1}{2}}\rho_{e^{it\Delta}\gamma_0e^{-it\Delta}}\big\|_{L_t^2(\mathbb{R};\mathcal{H}^{\frac{d-1}{2}})}}{\|\gamma_0\|_{\mathcal{H}^{\frac{d-1}{2}}}}=\infty.
\end{equation}
\end{proposition}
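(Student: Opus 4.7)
My approach is to invert the proof of Theorem \ref{Strichartz estimates for density functions}, which showed by Plancherel and duality that the homogeneous Strichartz inequality \eqref{homogeneous Strichartz estimates for density functions} reduces to the uniform boundedness in $(\tau,\xi)$ of the kernel
\[
K_{\alpha,\alpha_1}(\tau,\xi) \;=\; \int_{\mathbb{R}^{d-1}} \frac{\langle\xi\rangle^{2\alpha_1}\,dq'}{\langle (q_1^*, q')\rangle^{2\alpha}\langle (q_1^*-|\xi|, q')\rangle^{2\alpha}},
\]
where $q_1^* = (|\xi|^2-\tau)/(2|\xi|)$. At each endpoint in the proposition, $K$ fails to be bounded — but in two qualitatively different ways — and I would exploit each failure to produce the corresponding counterexample.

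For \eqref{eq: Strichartz- 2} at $\alpha=\alpha_1=(d-1)/2$, the integral $K(\tau,\xi)$ is finite pointwise but exhibits the unavoidable logarithmic growth already registered by the middle case of \eqref{integral 1}--\eqref{integral 2}, so $K(\tau,\xi)\sim\log\langle\xi\rangle$ along rays with $q_1^*=O(1)$. I would pick $(\tau_n,\xi_n)$ with $|\xi_n|\to\infty$ and $K(\tau_n,\xi_n)\to\infty$, and set $\tilde V_n$ to be a smooth unit-$L^2$ bump of unit size centered at $(\tau_n,\xi_n)$. Since $K$ varies slowly on that scale, $\int K\,|\tilde V_n|^2 \sim K(\tau_n,\xi_n)\to\infty$, violating the dual bound $\|K^{1/2}\tilde V\|_{L^2}^2\lesssim\|V\|_{L^2}^2$; by the duality reduction, \eqref{eq: Strichartz- 2} follows.

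For \eqref{eq: Strichartz-} a stronger argument is needed, since the claim is that $|\nabla|^{1/2}\rho$ fails to be a distribution. At $\alpha=(d-1)/4$ (and $\alpha_1=0$), each factor $\langle(\,\cdot\,,q')\rangle^{2\alpha}\sim\langle q'\rangle^{(d-1)/2}$ for large $|q'|$, so the integrand of $K$ decays exactly like $|q'|^{-(d-1)}$ and $K(\tau,\xi)\equiv +\infty$. I would construct the bad $\gamma_0$ directly in the parametrization $(\xi_1,\xi_2)\leftrightarrow(\tau,\xi,q')$ coming from $\xi_1=q_1^*\hat\xi+q'$, $\xi_2=\xi-\xi_1$, whose Jacobian reads $d\xi_1\,d\xi = d\tau\,dq'\,d\xi/(2|\xi|)$. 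Fix a compact $B\subset\mathbb{R}\times\mathbb{R}^d$ bounded away from $|\xi|=0$, a nonnegative $\chi\in C_c^\infty(B)$, and set
\[
\hat\gamma_0(\xi_1,\xi-\xi_1) := \chi(\tau,\xi)\,h(q'),\qquad h(q') := \langle q'\rangle^{-(d-1)}\bigl(\log(2+|q'|)\bigr)^{-3/4}.
\]
For $(\tau,\xi)\in B$ and large $|q'|$, $\langle\xi_1\rangle^{(d-1)/2}\langle\xi-\xi_1\rangle^{(d-1)/2}\sim\langle q'\rangle^{d-1}$, giving
\[
\|\gamma_0\|_{\mathcal{H}^{(d-1)/4}}^2 \;\lesssim\; |B|\int_{\mathbb{R}^{d-1}} \langle q'\rangle^{-(d-1)}\bigl(\log(2+|q'|)\bigr)^{-3/2}\,dq' \;<\;\infty.
\]
Choosing $V\in C_0^\infty$ with $\tilde V$ strictly positive on $B$, the same change of variables yields
\[
\langle V,\,|\nabla|^{1/2}\rho\rangle \;=\; c\int_B |\xi|^{-1/2}\chi(\tau,\xi)\overline{\tilde V(\tau,\xi)}\Big(\int_{\mathbb{R}^{d-1}} h(q')\,dq'\Big)d\tau\,d\xi \;=\; +\infty,
\]
since $\int h\,dq'\sim\int_1^\infty r^{-1}(\log r)^{-3/4}\,dr = \infty$. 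Approximating by smooth truncations $\gamma_0^{(R)}\to\gamma_0$ in $\mathcal{H}^{(d-1)/4}$, each $|\nabla|^{1/2}\rho^{(R)}$ is an honest tempered distribution yet the pairings with the fixed test function $V$ diverge, so no distributional extension exists.

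The main technical obstacle is the last step: carefully opening a gap between the critical Sobolev threshold $\int\langle q'\rangle^{d-1}|h|^2\,dq'<\infty$ and the pairing threshold $\int h\,dq'=\infty$, both sitting on the same power scale $|h|\sim\langle q'\rangle^{-(d-1)}$. The $(\log)^{-3/4}$ correction — or any logarithmic exponent strictly between $1/2$ and $1$ — is precisely what separates the two conditions, and is what forces the endpoint $\alpha=(d-1)/4$ in \eqref{alpha_1} to be a strict inequality.
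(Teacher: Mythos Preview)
Your proposal is correct, and for \eqref{eq: Strichartz- 2} it is essentially identical to the paper's argument: the paper also places unit-$L^2$ bumps $\tilde V_n$ (there, characteristic functions supported near $(\tau,\xi)=(|\xi|^2,\,ne_1)$) at points where the kernel $K$ grows like $\log n$, and reads off $I_n\gtrsim\log n\to\infty$ from the dual formula.

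For \eqref{eq: Strichartz-} your route is genuinely different. The paper's proof is a one-line duality observation: at $\alpha=(d-1)/4$ the inner $q'$-integral defining $K(\tau,\xi)$ diverges for every $(\tau,\xi)$, hence $I=\int K|\tilde V|^2=\infty$ for every nonzero $V\in C_0^\infty$, and the dual bound fails in the strongest possible sense. You instead construct an explicit $\gamma_0\in\mathcal{H}^{(d-1)/4}$ via the logarithmic wedge $h(q')=\langle q'\rangle^{-(d-1)}(\log(2+|q'|))^{-3/4}$, exploiting the gap between $\int\langle q'\rangle^{d-1}|h|^2\,dq'<\infty$ and $\int h\,dq'=\infty$, so that the pairing of $|\nabla|^{1/2}\rho$ with a fixed test function blows up along truncations. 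The paper's argument is shorter and covers all test functions at once; your argument is longer but delivers the sharper conclusion promised in the text preceding the proposition, namely a single $\gamma_0$ of unit $\mathcal{H}^{(d-1)/4}$-norm for which $|\nabla|^{1/2}\rho$ is not even a distribution, rather than merely the unboundedness of the operator into $(C_0^\infty)'$.
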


\begin{proof}
\eqref{eq: Strichartz-}: By dualization as in Theorem \ref{Strichartz estimates for density functions}, it is enough to show that the integral $I$ (see \eqref{I}) is unbounded for $V\in C_0^\infty(\mathbb{R}^{d+1})$. Repeating \eqref{I'}, we write 
\begin{equation}
I=\cdots=\int_{\mathbb{R}^{d+1}}\Big\{\int_{\mathbb{R}^{d-1}}\frac{dq'}{2\la (q_1^*, q')\ra^{\frac{d-1}{2}}\la(q_1^*-|\xi|,q')\ra^{\frac{d-1}{2}}} \Big\} |\tilde{V}(\tau,\xi)|^2d\xi d\tau.
\end{equation}
However, since the function
\begin{equation}
\frac{1}{\la (q_1^*, q')\ra^{\frac{d-1}{2}}\la(q_1^*-|\xi|,q')\ra^{\frac{d-1}{2}}}
\end{equation}
is not integrable over $\mathbb{R}^{d-1}$, we conclude that $I=\infty$.

\eqref{eq: Strichartz- 2}: By duality again, it suffices to find a bounded sequence $\{V_n(t,x)\}_{n=1}^\infty$ in $L_t^2(\mathbb{R}; L^2)$ such that 
\begin{equation}
I_n=\Big\|\frac{|\xi|^{\frac{1}{2}}\la\xi\ra^\frac{d-1}{2}\tilde{V}_n(-|\xi_1|^2+|\xi-\xi_1|^2,\xi)}{\la\xi_1\ra^{(d-1)/2}\la\xi-\xi_1\ra^{(d-1)/2}}\Big\|_{L_{\xi,\xi_1}^2}^2\to\infty.
\end{equation}
We define $V_n(t,x)$ by 
\begin{equation}
\tilde{V}_n(\tau,\xi)=\chi_1(\tau-|\xi|^2)\chi_2(\xi-ne_1),
\end{equation}
where $\chi_1(\tau)=\mathbf{1}_{[-1,1]}(\tau)$ and $\chi_2(\xi)=\mathbf{1}_{|\xi|\leq 1}$ are characteristic functions, and $e_1=(1,0,\cdots, 0)$ be a unit vector in $\mathbb{R}^d$. Then,
\begin{equation}
I_n=\cdots=\int_{\mathbb{R}^{d+1}}\Big\{\int_{\mathbb{R}^{d-1}}\frac{\la\xi\ra^{d-1}dq'}{2\la (q_1^*, q')\ra^{d-1}\la(q_1^*-|\xi|,q')\ra^{d-1}} \Big\} |\tilde{V}_n(\tau,\xi)|^2 d\xi d\tau,
\end{equation}
where $q_1^*=-\frac{\tau-|\xi|^2}{2|\xi|}$. Let $n$ be sufficiently large. In the above integral, $n-1\leq|\xi|\leq n+1$, $|q_1^*|=|\frac{\tau-|\xi|^2}{2|\xi|}|\leq \frac{1}{2(n-1)}\leq\frac{1}{2}$ and $n-\frac{3}{2}\leq|q_1^*-|\xi||=|\frac{\tau-|\xi|^2}{2|\xi|}+|\xi||\leq n+\frac{3}{2}$. Thus, we have
\begin{equation}
\begin{aligned}
\int_{\mathbb{R}^{d-1}}\frac{\la\xi\ra^{d-1}dq'}{2\la (q_1^*, q')\ra^{d-1}\la(q_1^*-|\xi|,q')\ra^{d-1}}&\sim \int_{\mathbb{R}^{d-1}}\frac{n^{d-1}dq'}{\la q'\ra^{d-1}(n^2+|q'|^2)^{(d-1)/2}}\\
&\gtrsim \int_{|q'|\leq \frac{n}{2}}\frac{n^{d-1}dq'}{\la q'\ra^{d-1}n^{d-1}}\sim \ln n.
\end{aligned}
\end{equation}
Therefore, we conclude that 
\begin{equation}
I_n\gtrsim\int_{\mathbb{R}^{d+1}} (\ln n)|\tilde{V}_n(\tau,\xi)|^2 d\xi d\tau\sim\ln n\to\infty,
\end{equation}
since $\|V_n\|_{L_t^2(\mathbb{R};L^2)}\sim 1$.
\end{proof}

\section{Conservation of relative energy for regular solutions}
\label{sec-H2encons}

In this section, we establish local well-posedness of the equation \eqref{NLS} in $\mathfrak{H}^2$ (Theorem \ref{trace LWP}), where
$\mathfrak{H}^2$ is the Banach space of trace-class self-adjoint operators equipped with the norm
\begin{equation}\label{trace H2}
\|\gamma\|_{\mathfrak{H}^2}:=\|\la\nabla\ra^2\gamma\la\nabla\ra^2\|_{\mathfrak{S}^1}.
\end{equation}
Subsequently, we show that $\mathfrak{H}^2$-regular solutions obey the  conservation law for the relative energy (Proposition \ref{conservation law for regular solutions}).
The purpose is (as we recall from the outline of proof in Section \ref{sec-statement}) that we aim to approximate the
solutions obtained in Theorem \ref{LWP0} with $\mathfrak{H}^2$-regular solutions, to
prove the conservation of the relative energy of the former.
This will be carried out in Section \ref{Conservation of the relative energy at 0T}.

Before we give the statement of our main result regarding local well-posedness  in $\mathfrak{H}^2$, we 
prove the following Lemma. 

\begin{lemma}\label{trace LWP lemma} $(i)$ For $Q\in\mathfrak{H}^2$,
\begin{equation}
\|[\rho_Q, \Pi^-]\|_{\mathfrak{S}^2}\lesssim \|Q\|_{\mathfrak{S}^2}.
\end{equation}
$(ii)$ For $Q_1, Q_2\in\mathfrak{H}^2$,
\begin{equation}
\|[\rho_{Q_1},Q_2]\|_{\mathfrak{H}^2}\lesssim\|\rho_{Q_1}\|_{H^2}\|Q_2\|_{\mathfrak{H}^2}\lesssim\|Q_1\|_{\mathfrak{H}^2}\|Q_2\|_{\mathfrak{H}^2}.
\end{equation}
\end{lemma}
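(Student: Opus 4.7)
The plan is to reduce both inequalities to three standard ingredients: the Kato--Seiler--Simon (KSS) bound on products $f(x)g(-i\nabla)$ in $\mathfrak{S}^{2}$, the Banach-algebra property of $H^{2}(\mathbb{R}^{d})$ for $d=2,3$, and the ``diagonal'' estimate $\|\rho_{Q}\|_{H^{2}}\lesssim\|Q\|_{\mathfrak{H}^{2}}$. The last of these I would prove once and for all by writing $Q=\sum_{j}\lambda_{j}|u_{j}\rangle\langle v_{j}|$ in singular-value form, so that $\rho_{Q}=\sum_{j}\lambda_{j}u_{j}\overline{v_{j}}$, and invoking the algebra property term by term to get $\|\rho_{Q}\|_{H^{2}}\le\sum_{j}|\lambda_{j}|\|u_{j}\|_{H^{2}}\|v_{j}\|_{H^{2}}=\|Q\|_{\mathfrak{H}^{2}}$.

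For part (i), I would split $[\rho_{Q},\Pi^{-}]=M_{\rho_{Q}}\Pi^{-}-\Pi^{-}M_{\rho_{Q}}$, where $M_{\rho_{Q}}$ denotes multiplication and $\Pi^{-}=\chi(-i\nabla)$ has symbol $\chi=\mathbf{1}_{\{|\xi|\le 1\}}\in L^{2}(\mathbb{R}^{d})$. KSS with $p=2$ gives both $\|M_{\rho_{Q}}\Pi^{-}\|_{\mathfrak{S}^{2}}$ and $\|\Pi^{-}M_{\rho_{Q}}\|_{\mathfrak{S}^{2}}$ bounded by $\|\rho_{Q}\|_{L^{2}}\|\chi\|_{L^{2}}\lesssim\|\rho_{Q}\|_{L^{2}}$, whence $\|[\rho_{Q},\Pi^{-}]\|_{\mathfrak{S}^{2}}\lesssim\|\rho_{Q}\|_{L^{2}}$. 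The $L^{2}$-norm of $\rho_{Q}$ is then controlled by the norm of $Q$ on the right-hand side via the diagonal estimate above (this is where the $\mathfrak{H}^{2}$-hypothesis on $Q$ enters). Notably the only feature of $\Pi^{-}$ used here is the compact support of $\chi$, so the non-smoothness of the Fermi-sea projector is inconsequential.

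For part (ii), I would conjugate by $\langle\nabla\rangle^{2}$ and factor
\[
\langle\nabla\rangle^{2}M_{\rho_{Q_{1}}}Q_{2}\langle\nabla\rangle^{2}
=\bigl(\langle\nabla\rangle^{2}M_{\rho_{Q_{1}}}\langle\nabla\rangle^{-2}\bigr)\bigl(\langle\nabla\rangle^{2}Q_{2}\langle\nabla\rangle^{2}\bigr),
\]
which yields $\|M_{\rho_{Q_{1}}}Q_{2}\|_{\mathfrak{H}^{2}}\le\|\langle\nabla\rangle^{2}M_{\rho_{Q_{1}}}\langle\nabla\rangle^{-2}\|_{\textup{Op}}\,\|Q_{2}\|_{\mathfrak{H}^{2}}$. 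The first factor is multiplication by $\rho_{Q_{1}}$ viewed as an endomorphism of $H^{2}$, whose operator norm is $\lesssim\|\rho_{Q_{1}}\|_{H^{2}}$ precisely by the algebra property in $d=2,3$. The symmetric term $Q_{2}M_{\rho_{Q_{1}}}$ is handled by taking adjoints (both $Q_{2}$ and $M_{\rho_{Q_{1}}}$ are self-adjoint and $\|\cdot\|_{\mathfrak{H}^{2}}$ is invariant under $Q\mapsto Q^{*}$). Combining with the diagonal bound $\|\rho_{Q_{1}}\|_{H^{2}}\lesssim\|Q_{1}\|_{\mathfrak{H}^{2}}$ then gives both inequalities in (ii).

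The main obstacle I anticipate is a clean justification of the operator-norm bound on $\langle\nabla\rangle^{2}M_{\rho_{Q_{1}}}\langle\nabla\rangle^{-2}$: this is equivalent to multiplication by $\rho_{Q_{1}}$ being bounded on $H^{2}$ (by the algebra property) and, by duality, on $H^{-2}$; interpolation or direct conjugation then delivers the required $L^{2}\to L^{2}$ estimate. Once this technical point is in place, the remaining work is routine Schatten arithmetic.
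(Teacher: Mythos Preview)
Your argument for part (ii) is correct and in fact cleaner than the paper's. The paper expands $\langle\nabla\rangle^{2}\rho_{Q_{1}}Q_{2}\langle\nabla\rangle^{2}$ via the Leibniz rule and bounds each piece separately with H\"older and Sobolev; your factorization $(\langle\nabla\rangle^{2}M_{\rho_{Q_{1}}}\langle\nabla\rangle^{-2})(\langle\nabla\rangle^{2}Q_{2}\langle\nabla\rangle^{2})$ together with the algebra property of $H^{2}(\mathbb{R}^{d})$ in $d=2,3$ accomplishes the same thing in one stroke. Your ``main obstacle'' is not one: $\|\langle\nabla\rangle^{2}M_{\rho}\langle\nabla\rangle^{-2}\|_{L^{2}\to L^{2}}=\|M_{\rho}\|_{H^{2}\to H^{2}}\lesssim\|\rho\|_{H^{2}}$ is immediate from the algebra property, with no duality or interpolation needed. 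One imprecision in your diagonal estimate: the identity $\sum_{j}|\lambda_{j}|\|u_{j}\|_{H^{2}}\|v_{j}\|_{H^{2}}=\|Q\|_{\mathfrak{H}^{2}}$ fails if $u_{j},v_{j}$ are the singular vectors of $Q$ itself. Decompose instead $\langle\nabla\rangle^{2}Q\langle\nabla\rangle^{2}=\sum_{j}\mu_{j}|\phi_{j}\rangle\langle\psi_{j}|$ with $\phi_{j},\psi_{j}$ orthonormal in $L^{2}$, so that $u_{j}=\langle\nabla\rangle^{-2}\phi_{j}$ has $\|u_{j}\|_{H^{2}}=1$ and $\sum_{j}|\mu_{j}|=\|Q\|_{\mathfrak{H}^{2}}$; then your chain is correct. (The paper does essentially this, using the eigenfunction expansion of $\langle\nabla\rangle^{2}Q\langle\nabla\rangle^{2}$.)

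For part (i) there is a genuine gap, though it originates in a typo in the statement. The paper's own proof, and the application in Theorem~\ref{trace LWP}, are for $\|[\rho_{Q},\Pi^{-}]\|_{\mathfrak{H}^{2}}\lesssim\|Q\|_{\mathfrak{H}^{2}}$, not $\mathfrak{S}^{2}$; the contraction argument there takes place in $C_{t}(I;\mathfrak{H}^{2})$. Your KSS argument yields only $\|[\rho_{Q},\Pi^{-}]\|_{\mathfrak{S}^{2}}\lesssim\|\rho_{Q}\|_{L^{2}}\lesssim\|Q\|_{\mathfrak{H}^{2}}$, which is strictly weaker and insufficient. The obstruction to upgrading your approach is that $\mathfrak{H}^{2}$ is a trace-class norm and the KSS bound $\|f(x)g(-i\nabla)\|_{\mathfrak{S}^{p}}\lesssim\|f\|_{L^{p}}\|g\|_{L^{p}}$ fails at $p=1$. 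The paper therefore uses the Birman--Solomjak inequality $\|f(x)g(-i\nabla)\|_{\mathfrak{S}^{1}}\lesssim\|f\|_{\ell^{1}L^{2}}\|g\|_{\ell^{1}L^{2}}$ instead, after a Leibniz expansion of $(1-\Delta)\rho_{Q}\Pi^{-}(1-\Delta)$ that moves both copies of $(1-\Delta)$ onto $\rho_{Q}$ (the $\Pi^{-}$ side is harmless since its symbol is compactly supported). The resulting $\|(1-\Delta)\rho_{Q}\|_{\ell^{1}L^{2}}$ is then controlled by $\|Q\|_{\mathfrak{H}^{2}}$ via exactly your diagonal decomposition.
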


\begin{proof}
$(i)$ By the trivial inequality $|[A,B]|\leq 2|AB|$ and the Leibnitz rule, we write
\begin{equation}
\begin{aligned}
\|[\rho_Q, \Pi^-]\|_{\mathfrak{H}^2}&\leq 2\|(1-\Delta)\rho_{Q}\Pi^-(1-\Delta)\|_{\mathfrak{S}^1}\\
&\leq 2\Big\{\|(-\Delta\rho_{Q})(\Pi^-(1-\Delta))\|_{\mathfrak{S}^1}+2\sum_{k=1}^d\|(\partial_{x_k}\rho_{Q})(\partial_{x_k}\Pi^-(1-\Delta))\|_{\mathfrak{S}^1}\\
&\quad\quad +\|\rho_{Q}((1-\Delta)\Pi^-(1-\Delta))\|_{\mathfrak{S}^1}\Big\}.
\end{aligned}
\end{equation}
Then, using the Birman-Solomjak inequality (see Theorem 4.5 in \cite{Simon})
\begin{equation}
\|f(x)g(-i\nabla)\|_{\mathfrak{S}^1}\lesssim \|f\|_{\ell^1 L^2}\|g\|_{\ell^1L^2},
\end{equation}
where $\|f\|_{\ell^1 L^2}=\sum_{z\in\mathbb{Z}^d}\|f\|_{L^2(C_z)}$ and $\{C_z\}_{z\in\mathbb{Z}^d}$ is the collection of cubes $C_z:=z+[0,1)^d$, and using the eigenfunction expansion 
$$
	\la\nabla\ra^2Q\la\nabla\ra^2=\sum_{j=1}^\infty\lambda_j |\phi_j\ra\la\phi|,
$$
where $\{\phi_j\}_{j=1}^\infty$ is an orthonormal set in $L^2$, we obtain
\begin{equation}
\begin{aligned}
\|[\rho_Q, \Pi^-]\|_{\mathfrak{H}^2}&\lesssim \|\Delta\rho_{Q}\|_{\ell^1 L^2}+\|\nabla\rho_{Q}\|_{\ell^1 L^2}+\|\rho_{Q}\|_{\ell^1 L^2} \\
& \lesssim \|(1-\Delta)\rho_{Q}\|_{\ell^1 L^2}\\
&=\Big\|(1-\Delta)\sum_{j=1}^\infty\lambda_j|\la\nabla\ra^{-2}\phi_j|^2\Big\|_{\ell^1L^2}\\
& \leq \sum_{j=1}^\infty |\lambda_j| \big\|(1-\Delta)\big(|\la\nabla\ra^{-2}\phi_j|^2\big)\big\|_{\ell^1L^2}.
\end{aligned}
\end{equation}
By the Leibnitz rule, the H\"older inequality and Sobolev inequality, 
\begin{equation}
\begin{aligned}
&\big\|(1-\Delta)\big(|\la\nabla\ra^{-2}\phi_j|^2\big)\big\|_{\ell^1L^2}\\
&\leq\big\|\la\nabla\ra^{-2}\phi_j\big\|_{\ell^2L^4}^2+2\big\|\Delta\la\nabla\ra^{-2}\phi_j\big\|_{L^2}\big\|\la\nabla\ra^{-2}\phi_j\big\|_{\ell^2L^\infty}+2\big\|\nabla\la\nabla\ra^{-2}\phi_j\big\|_{\ell^2L^4}^2\\
&\lesssim\|\phi_j\|_{L^2}^2=1.
\end{aligned}
\end{equation}
Thus, we conclude that $\|[\rho_Q, \Pi^-]\|_{\mathfrak{H}^2}\lesssim \sum_{j=1}^\infty |\lambda_j|=\|Q\|_{\mathfrak{H}^2}$.

$(ii)$ Similarly, by the Leibnitz rule, we write
\begin{equation}
\begin{aligned}
\big\|[\rho_{Q_1}Q_2\big\|_{\mathfrak{H}^2}&\leq \|(\Delta\rho_{Q_1})Q_2\la\nabla\ra^2\|_{\mathfrak{S}^1}+2\|(\nabla\rho_{Q_1})\cdot\nabla Q_2\la\nabla\ra^2\|_{\mathfrak{S}^1}+\|\rho_{Q_1}\la\nabla\ra^2Q_2\la\nabla\ra^2\|_{\mathfrak{S}^1}\\
&\leq \Big\{\|(\Delta\rho_{Q_1})\la\nabla\ra^{-2}\|_{\textup{Op}}+2\|(\nabla\rho_{Q_1})\la\nabla\ra^{-1}\|_{\textup{Op}}+\|\rho_{Q_1}\|_{\textup{Op}}\Big\}\|Q_2\|_{\mathfrak{H}^2}\\
&\leq \Big\{\|\Delta\rho_{Q_1}\|_{L^2}\|\la\nabla\ra^{-2}\|_{L^2\to L^\infty}+2\|\nabla\rho_{Q_1}\|_{L^{d^+}}\|\la\nabla\ra^{-1}\|_{L^{(\frac{2d}{d-2})^-}\to L^2}\\
&\quad\quad\quad+\|\rho_{Q_1}\|_{L^\infty}\Big\}\|Q_2\|_{\mathfrak{H}^2}\\
&\lesssim \|(1-\Delta)\rho_{Q_1}\|_{L^2}\|Q_2\|_{\mathfrak{H}^2}\quad\textup{(by the Sobolev inequality)}.
\end{aligned}
\end{equation}
Then, using the canonical form as in $(i)$, we complete the proof.
\end{proof}

The well-posedness result of this section can be formulated as follows: 

\begin{theorem}[Local well-posedness in $\mathfrak{H}^2$]\label{trace LWP}
Let $d=2,3$. For initial data $Q_0\in\mathfrak{H}^2$, there exists a unique solution $Q\in C_t(I;\mathfrak{H}^2)$ to the equation \eqref{NLS}.
\end{theorem}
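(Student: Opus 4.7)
The plan is a standard Banach fixed-point argument on the Duhamel formulation. Since $\Pi_1^-$ is a spectral projection of $-\Delta$, $[-\Delta,\Pi_1^-]=0$, so equation \eqref{NLS} reads $i\partial_t Q = [-\Delta,Q] + [\rho_Q,\Pi_1^-] + [\rho_Q,Q]$, and Duhamel's formula gives
\begin{equation*}
\Phi(Q)(t) := e^{it\Delta}Q_0 e^{-it\Delta} - i\int_0^t e^{i(t-s)\Delta}\bigl([\rho_{Q(s)},\Pi_1^-]+[\rho_{Q(s)},Q(s)]\bigr)e^{-i(t-s)\Delta}\,ds.
\end{equation*}
I will show $\Phi$ is a contraction on the closed ball of radius $R:=2\|Q_0\|_{\mathfrak{H}^2}$ in $C_t(I;\mathfrak{H}^2)$ for a sufficiently small interval $I$ of length $T$, and take the unique fixed point to be the solution.

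The key structural observation is that the free flow preserves $\mathfrak{H}^2$: because $\la\nabla\ra^2$ commutes with $e^{it\Delta}$ (both are Fourier multipliers) and $\mathfrak{S}^1$ is invariant under unitary conjugation, one has $\|e^{it\Delta}A e^{-it\Delta}\|_{\mathfrak{H}^2}=\|A\|_{\mathfrak{H}^2}$. After pulling the $\mathfrak{H}^2$ norm inside the Duhamel integral by Minkowski, the two nonlinear pieces are controlled directly by Lemma \ref{trace LWP lemma}: part (i) bounds $\|[\rho_Q,\Pi_1^-]\|_{\mathfrak{H}^2}$ by $\|Q\|_{\mathfrak{H}^2}$ and part (ii) bounds $\|[\rho_Q,Q]\|_{\mathfrak{H}^2}$ by $\|Q\|_{\mathfrak{H}^2}^2$. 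This yields
\begin{equation*}
\|\Phi(Q)\|_{C_t(I;\mathfrak{H}^2)} \leq \|Q_0\|_{\mathfrak{H}^2} + CT\bigl(R+R^2\bigr),
\end{equation*}
which is $\leq R$ provided $T\lesssim 1/(1+R)$. The difference estimate uses the algebraic splitting
\begin{equation*}
F(Q_1)-F(Q_2) = [\rho_{Q_1-Q_2},\Pi_1^-+Q_1] + [\rho_{Q_2},Q_1-Q_2],
\end{equation*}
and another application of Lemma \ref{trace LWP lemma} to produce $\|\Phi(Q_1)-\Phi(Q_2)\|_{C_t(I;\mathfrak{H}^2)}\leq CT(1+R)\|Q_1-Q_2\|_{C_t(I;\mathfrak{H}^2)}$; shrinking $T$ further so that $CT(1+R)<1/2$ closes the contraction.

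The fixed point automatically lies in $C_t(I;\mathfrak{H}^2)$ once one notes that $t\mapsto e^{it\Delta}A e^{-it\Delta}$ is strongly continuous into $\mathfrak{H}^2$ (routine, by approximation of $A\in\mathfrak{H}^2$ by finite-rank smooth operators combined with the norm preservation above) and that the Duhamel integral of an $L^1_t \mathfrak{H}^2$ integrand is continuous in $t$. Uniqueness on all of $C_t(I;\mathfrak{H}^2)$, not just within the ball, follows from the same difference estimate via a Gronwall argument. I do not expect a serious obstacle: the entire analytic burden has been concentrated in Lemma \ref{trace LWP lemma}, which is also where the restriction $d=2,3$ enters (through the Sobolev embedding and the Birman--Solomjak trace ideal bound used in its proof). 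The only point needing care beyond turning the crank is the strong continuity of the free flow in the $\mathfrak{H}^2$ norm, which is handled by the density argument just mentioned.
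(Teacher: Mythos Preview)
Your proposal is correct and follows essentially the same approach as the paper: Duhamel formulation, contraction mapping on the ball of radius $2\|Q_0\|_{\mathfrak{H}^2}$ in $C_t(I;\mathfrak{H}^2)$, invariance of $\mathfrak{H}^2$ under the free flow, Lemma~\ref{trace LWP lemma} for the nonlinear terms, and the same algebraic splitting for the difference estimate. Your additional remarks on strong continuity and on uniqueness via Gronwall are standard fillers the paper omits; otherwise the arguments match.
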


\begin{proof}[Proof of Theorem \ref{trace LWP}]
We define
\begin{equation}
\Phi(Q):=e^{it\Delta}Q_0 e^{-it\Delta}-i\int_0^t e^{i(t-s)\Delta}[\rho_Q, \Pi^-+Q](s)e^{-i(t-s)\Delta}ds.
\end{equation}
By the standard contraction mapping argument, it suffices to show that $\Phi$ is contractive on the ball $B_R\subset C_t(I;\mathfrak{H}^2)$ of radius $R=2\|Q_0\|_{\mathfrak{H}^2}$, where $I\ni 0$ is a short time interval to be chosen later. 

First, by the Minkowski inequality and using the unitarity of the linear propagator, we write
\begin{equation}\label{trace LWP proof}
\begin{aligned}
\|\Phi(Q)\|_{C_t(I;\mathfrak{H}^2)}&\leq \|Q_0\|_{\mathfrak{H}^2}+\|[\rho_Q,\Pi^-]\|_{L_t^1(I;\mathfrak{H}^2)}+\|[\rho_Q, Q]\|_{L_t^1(I;\mathfrak{H}^2)},\\
\|\Phi(Q_1)-\Phi(Q_2)\|_{C_t(I;\mathfrak{H}^2)}&\leq\|[\rho_{(Q_1-Q_2)},\Pi^-]\|_{L_t^1(I;\mathfrak{H}^2)}+\|[\rho_{(Q_1-Q_2)}, Q_1]\|_{L_t^1(I;\mathfrak{H}^2)}\\
&\quad\quad+\|[\rho_{Q_2}, (Q_1-Q_2)]\|_{L_t^1(I;\mathfrak{H}^2)},
\end{aligned}
\end{equation}
where the identity 
$$
	[\rho_{Q_1}, Q_1]-[\rho_{Q_2}, Q_2]=[\rho_{Q_1-Q_2}, Q_1]+[\rho_{Q_2}, Q_1-Q_2]
$$ is used for the difference. Then, applying Lemma \ref{trace LWP lemma}, we get
\begin{equation}\label{choice of c in trace LWP}
\begin{aligned}
\|\Phi(Q)\|_{C_t(I;\mathfrak{H}^2)}&\leq \|Q_0\|_{\mathfrak{H}^2}+c|I|\|Q\|_{C_t(I;\mathfrak{H}^2)}+c|I|\|Q\|_{C_t(I;\mathfrak{H}^2)}^2,\\
\|\Phi(Q_1)-\Phi(Q_2)\|_{C_t(I;\mathfrak{H}^2)}&\leq c|I|\Big\{1+|\|Q_1\|_{C_t(I;\mathfrak{H}^2)}+\|Q_2\|_{C_t(I;\mathfrak{H}^2)}\Big\}\|Q_1-Q_2\|_{C_t(I;\mathfrak{H}^2)}.
\end{aligned}
\end{equation}
Now, choosing an interval $I$ with length $I=\min\{\frac{1}{4c}, \frac{1}{8c\|Q_0\|_{\mathfrak{H}^2}}\}$, we conclude that 
\begin{equation}
\begin{aligned}
\|\Phi(Q)\|_{C_t(I;\mathfrak{H}^2)}&\leq 2\|Q_0\|_{\mathfrak{H}^2},\\
\|\Phi(Q_1)-\Phi(Q_2)\|_{C_t(I;\mathfrak{H}^2)}&\leq \frac{3}{4}\|Q_1-Q_2\|_{C_t(I;\mathfrak{H}^2)}
\end{aligned}
\end{equation}
for $Q,Q_1,Q_2\in B_R$.
\end{proof}

Next, we prove the conservation law for the regular solutions.

\begin{proposition}[Conservation of the relative energy for regular solutions]\label{conservation law for regular solutions}
Let $d=2,3$. If $Q(t)\in C_t(I;\mathfrak{H}^2)$ solves the equation \eqref{NLS} with initial data $Q_0$, then
\begin{equation}
\mathcal{E}(Q(t))=\mathcal{E}(Q_0),\quad\forall t\in I.
\end{equation}
\end{proposition}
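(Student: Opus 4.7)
The approach is to differentiate $\mathcal{E}(Q(t))$ along the flow and show the derivative vanishes identically, exploiting the $\mathfrak{H}^2$-regularity throughout. From the Duhamel form used in the proof of Theorem \ref{trace LWP}, combined with Lemma \ref{trace LWP lemma} (which places $[\rho_Q,\Pi^-+Q]\in\mathfrak{H}^2\subset\mathfrak{S}^1$), one first verifies that
\begin{equation*}
\partial_t Q \;=\; -i[H_Q,\gamma] \;=\; -i\bigl([-\Delta,Q]+[\rho_Q,\Pi^-]+[\rho_Q,Q]\bigr)
\end{equation*}
exists continuously with values in $\mathfrak{S}^1$, where $H_Q:=-\Delta+\rho_Q$ and $\gamma:=\Pi^-+Q$. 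For $Q\in\mathfrak{H}^2$, one has $\textup{Tr}_0((-\Delta-1)Q)=\textup{Tr}((-\Delta-1)Q)$ and $\rho_Q\in H^2$, so the chain rule applies to each piece of $\mathcal{E}(Q)=\textup{Tr}((-\Delta-1)Q)+\tfrac12\int(\rho_Q)^2\,dx$.

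Differentiating yields the key identity
\begin{equation*}
\frac{d}{dt}\mathcal{E}(Q(t)) \;=\; \textup{Tr}\bigl((-\Delta-1)\,\partial_t Q\bigr)+\textup{Tr}\bigl(\rho_Q\,\partial_t Q\bigr) \;=\; \textup{Tr}\bigl((H_Q-1)\,\partial_t Q\bigr),
\end{equation*}
where I use the kernel identity $\partial_t Q(x,x)=\partial_t\rho_Q(x)$ to rewrite $\int\rho_Q\partial_t\rho_Q\,dx$ as a trace against the multiplication operator $\rho_Q$. Particle-number conservation kills the residual $-\textup{Tr}(\partial_t Q)=i\textup{Tr}([H_Q,\gamma])$, reducing matters to the formal cancellation
\begin{equation*}
\textup{Tr}\bigl(H_Q\,\partial_t Q\bigr) \;=\; -i\textup{Tr}\bigl(H_Q[H_Q,\gamma]\bigr) \;=\; -i\bigl(\textup{Tr}(H_Q^2\gamma)-\textup{Tr}(H_Q\gamma H_Q)\bigr) \;=\; 0
\end{equation*}
by cyclicity of the trace.

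The main obstacle is that $\gamma=\Pi^-+Q$ is not trace class, so the cyclicity above must be justified piecewise. I would use the three-term expansion of $[H_Q,\gamma]$ displayed in the first paragraph: the summands $[\rho_Q,\Pi^-]$ and $[\rho_Q,Q]$ lie in $\mathfrak{H}^2$ by Lemma \ref{trace LWP lemma}, so their products with $H_Q$ are genuinely trace class and cyclicity applies directly. The remaining piece $\textup{Tr}(H_Q[-\Delta,Q])$ splits into $\textup{Tr}(-\Delta[-\Delta,Q])$, which vanishes because $-\Delta$ commutes with itself, and $\textup{Tr}(\rho_Q[-\Delta,Q])$, which cancels against $\textup{Tr}(-\Delta[\rho_Q,Q])$ via the Jacobi-type identity $\textup{Tr}(A[B,C])+\textup{Tr}(B[C,A])+\textup{Tr}(C[A,B])=0$, rigorous here because $Q\in\mathfrak{H}^2$ makes all relevant triple products trace class. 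If this bookkeeping becomes delicate, a safe alternative is to approximate $Q_0$ in $\mathfrak{H}^2$ by finite-rank smooth operators, prove the identity for the corresponding trace-class solutions produced by Theorem \ref{trace LWP} (where all manipulations are elementary), and pass to the limit using continuity of $\mathcal{E}$ on $\mathfrak{H}^2$.
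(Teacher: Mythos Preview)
Your proposal is correct and takes essentially the same approach as the paper: differentiate $\mathcal{E}(Q(t))$, substitute the equation, and cancel all terms via cyclicity of the trace, with the $\mathfrak{H}^2$-regularity guaranteeing the relevant products are trace class. You are in fact more careful than the paper, which silently drops the $[\rho_Q,\Pi^-]$ contribution from $\partial_t Q$ (writing $\partial_t Q=-i[-\Delta+\rho_Q,Q]$ rather than $-i[-\Delta+\rho_Q,\Pi^-+Q]$); your piecewise treatment of $[H_Q,\gamma]=[-\Delta,Q]+[\rho_Q,\Pi^-]+[\rho_Q,Q]$ is exactly the right way to make the argument rigorous.
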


\begin{proof}
Differentiating the relative energy, and then inserting the equation \eqref{NLS}, we get
\begin{equation}
\begin{aligned}
\frac{d}{dt}\mathcal{E}(Q(t))&=\textup{Tr}\Big\{(-\Delta-1)\partial_tQ+\rho_{Q}\partial_tQ\Big\}(t)\\
&=(-i)\textup{Tr}\Big\{(-\Delta-1)[-\Delta+\rho_Q,Q]+\rho_Q[-\Delta+\rho_Q,Q]\Big\}(t).
\end{aligned}
\end{equation}
Indeed, the above calculation make sense, since $Q(t)\in \mathfrak{H}^2$. By cyclicity of the trace, we have $\textup{Tr}(-\Delta-1)[-\Delta, Q]=0$, $\textup{Tr}[\rho_Q,Q]=0$ and $\textup{Tr}\rho_Q[\rho_Q,Q]=0$. Thus, we get
\begin{equation}
\frac{d}{dt}\mathcal{E}(Q(t))=(-i)\Big\{\textup{Tr}(-\Delta)[\rho_Q,Q]+\textup{Tr}\rho_Q[-\Delta,Q]\Big\}(t).
\end{equation}
Then, by cyclicity again,
\begin{equation}
\begin{aligned}
&\textup{Tr}(-\Delta)[\rho_Q,Q]+\textup{Tr}\rho_Q[-\Delta,Q]\\
&=\textup{Tr}(-\Delta)\rho_Q Q-\textup{Tr}(-\Delta)Q\rho_Q+\textup{Tr}\rho_Q(-\Delta)Q-\textup{Tr}\rho_QQ(-\Delta)\\
&=\textup{Tr}(-\Delta)\rho_Q Q-\textup{Tr}(-\Delta)Q\rho_Q+\textup{Tr}(-\Delta)Q\rho_Q-\textup{Tr}(-\Delta)\rho_QQ=0.
\end{aligned}
\end{equation}
Therefore, we conclude that $\frac{d}{dt}\mathcal{E}(Q(t))=0$.
\end{proof}

\section{Local well-posedness in a space larger than the energy space}
\label{sec-XYspaceLWP}

In this section, we prove the main local well-posedness results contained in Theorem \ref{LWP0}. 

For the convenience of the reader, we review the norms introduced in Section \ref{sec-statement}. 
Let $\alpha\geq 1$. The initial data space $\mathfrak{X}^\alpha$ was defined as the Banach space of self-adjoint operators equipped the norm
\begin{equation}
\|Q\|_{\mathfrak{X}^\alpha}:=\|Q\|_{\textup{Op}}+\|\Pi_2^+Q\|_{\mathcal{H}^\alpha}.
\end{equation}
Given $I\subset\mathbb{R}$, the solution space $\mathfrak{Y}^\alpha(I)$ was defined as the Banach space of time-dependent self-adjoint operators with the norm
\begin{equation}\label{Y alpha norm}
\|Q\|_{\mathfrak{Y}^\alpha(I)}:=\|Q\|_{C_t(I;\textup{Op})}+\|\Pi_2^+Q\|_{\mathcal{S}^\alpha(I)}+\|\rho_Q\|_{L_t^2(I;H^{\alpha+\frac{1}{2}-\eta})\bigcap L_t^\infty(I; L^2)},
\end{equation}
where
\begin{equation}\label{eta}
\left\{\begin{aligned}
&\eta>0&&\text{ when }d=3\textup{ and }\alpha=1,\\
&\eta=0&&\text{ when }d=2\textup{ and }\alpha\geq 1\textup{, or }d=3\textup{ and }\alpha>1.
\end{aligned}
\right.
\end{equation}
The reason we need $\eta$ in the norm is that $\alpha_1$ in Theorem \ref{Strichartz estimates for density functions} is strictly less than 1 when $\alpha=1$ and $d=3$. This is to offset the logarithmic growth of \eqref{integral 1} and \eqref{integral 2} in $|\xi|$ which appears in this case.

Note that when $\alpha=1$, Theorem \ref{LWP0} provides local well-posedness in a space larger than the relative kinetic energy space. Indeed, the initial data space $\mathfrak{X}^1$ contains the relative kinetic energy space $\mathcal{K}$, since by the estimates in \eqref{energy space proof 2},
\begin{equation}
\|\Pi_2^+ Q\|_{\mathcal{H}^1}\leq\|\Pi_2^+ Q\Pi_2^+\|_{\mathfrak{H}^1}+\|\Pi_2^+ Q\Pi_2^-\|_{\mathcal{H}^1}<\infty.
\end{equation}

%We prove the following local well-posedness theorem.

%\begin{theorem}[Local well-posedness]\label{LWP}
%Let $d=2,3$ and $\alpha\geq 1$. Given initial data $Q_0\in\mathfrak{X}^\alpha\cap\mathcal{K}$, there exists a unique solution $Q\in\mathfrak{Y}^\alpha(I)$ to the equation \eqref{NLS}.
%\end{theorem}

The main tools to prove Theorem \ref{LWP0} are the Strichartz estimates in Section \ref{sec-strichartz}. Note that the Strichartz estimates in Theorem \ref{Strichartz estimates for density functions} cannot control the $L^2$-norm of (the low frequency part of) the density function $\rho_{e^{it\Delta}Q e^{-it\Delta}}$, while we need it for global well-posedness, since the potential energy of the relative energy is $\|\rho_Q\|_{L^2}^2$. Hence, we modify Strichartz estimates using the Lieb-Thirring inequality in Frank, Lewin, Lieb and Seiringer \cite{FLLS1},
\begin{equation}\label{LT inequality}
\textup{Tr}_0(-\Delta-1)Q\geq K_\textup{LT}\int_{\mathbb{R}^d}\Big\{\big(\rho_{\Pi^-}+\rho_Q\big)^{1+\frac{2}{d}}-\big(\rho_{\Pi^-}\big)^{1+\frac{2}{d}}-\tfrac{2+d}{d}\big(\rho_{\Pi^-}\big)^{\frac{2}{d}}\rho_Q\Big\}dx
\end{equation}
and the generalized Sobolev inequality in \cite{CP1} 
\begin{equation}\label{Sobolev inequality}
\|\rho_Q\|_{L^2}\lesssim \|Q\|_{\mathcal{H}^1}.
\end{equation}

\begin{proposition}[Local-in-time Strichartz estimates for density functions]\label{local-in-time Strichartz}
Let $d=2,3$, $\alpha\geq 1$ and $\eta$ be as in \eqref{eta}, and let $I\ni 0$ be an interval with $|I|\leq1$. Then, we have
\begin{equation}\label{homogeneous Strichartz estimates for density functions at 0T}
\|\rho_{e^{it\Delta}Q_0e^{-it\Delta}}\|_{L_t^2(I;H^{\alpha+\frac{1}{2}-\eta})\bigcap L_t^\infty(I; L^2)}\lesssim\|Q_0\|_{\mathfrak{X}^\alpha}+\Big\{\textup{Tr}_0(-\Delta-1)Q_0\Big\}^{\frac{1}{2}}
\end{equation}
and
\begin{equation}\label{inhomogeneous Strichartz estimates for density functions at 0T}
\Big\|\rho\Big[ \int_0^t e^{i(t-s)\Delta}R(s) e^{-i(t-s)\Delta}ds \Big]\Big\|_{L_t^2(I;H^{\alpha+\frac{1}{2}-\eta})\bigcap L_t^\infty(I; L^2)}\lesssim\|R(t)\|_{L_t^1(I;\mathcal{H}^\alpha)}.
\end{equation}
\end{proposition}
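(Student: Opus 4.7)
The plan is to split $Q_0 = A + B$, where $B := \Pi_2^- Q_0 \Pi_2^-$ is frequency-localized in both variables and $A := Q_0 - B$ collects the three pieces that carry at least one factor of $\Pi_2^+$. Self-adjointness of $Q_0$ together with the invariance of $\|\cdot\|_{\mathcal{H}^\alpha}$ under adjunction give $\|A\|_{\mathcal{H}^\alpha} \lesssim \|\Pi_2^+ Q_0\|_{\mathcal{H}^\alpha} \le \|Q_0\|_{\mathfrak{X}^\alpha}$, whereas $B$ need not be Hilbert-Schmidt. Because $\Pi_2^\pm$ commute with $e^{it\Delta}$, the evolutions $A_t := e^{it\Delta} A e^{-it\Delta}$ and $B_t = \Pi_2^- Q_t \Pi_2^-$ retain the same Fourier-support structure; in particular $\rho_{B_t}$ has spatial Fourier support contained in a fixed ball, so by Bernstein all of its Sobolev norms reduce to $\|\rho_{B_t}\|_{L^2}$.

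For the $A$-piece of part $(i)$, Theorem \ref{Strichartz estimates for density functions} directly yields $\||\nabla|^{1/2} \rho_{A_t}\|_{L_t^2(\mathbb{R}; H^{\alpha_1})} \lesssim \|A\|_{\mathcal{H}^\alpha} \lesssim \|Q_0\|_{\mathfrak{X}^\alpha}$, where the exponent $\eta$ in \eqref{eta} has been chosen precisely so that $\alpha_1 + \tfrac{1}{2} \ge \alpha + \tfrac{1}{2} - \eta$; this covers $\rho_{A_t}$ in $L_t^2 H^{\alpha+1/2-\eta}$ at high frequencies. At low frequencies I combine the generalized Sobolev inequality \eqref{Sobolev inequality} with conservation of $\|\cdot\|_{\mathcal{H}^1}$ under the free flow to get $\|\rho_{A_t}\|_{L_t^\infty L^2} \lesssim \|A_t\|_{\mathcal{H}^1} = \|A\|_{\mathcal{H}^1} \lesssim \|Q_0\|_{\mathfrak{X}^\alpha}$ (using $\alpha \ge 1$), and the hypothesis $|I| \le 1$ transfers this to the missing $L_t^2 L^2$ control at low frequencies.

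For the $B$-piece, the Bernstein reduction above leaves only the $L_t^\infty L^2$ bound $\|\rho_{B_t}\|_{L_t^\infty L^2} \lesssim \{\textup{Tr}_0(-\Delta-1) Q_0\}^{1/2}$ to prove. The key identity $B_t + \Pi^- = \Pi_2^- \gamma_t \Pi_2^-$ (which uses $\Pi_2^- A \Pi_2^- = 0$ and $\Pi^- \le \Pi_2^-$) is sandwiched between $0$ and $\Pi_2^- \le 1$; this both makes \eqref{LT inequality} applicable to $B_t$ and, by passing to diagonals, forces the pointwise bound $-\rho_{\Pi^-} \le \rho_{B_t} \le \rho_{\Pi_2^-} - \rho_{\Pi^-}$, so that $\rho_{B_t}$ is uniformly bounded. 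On such a bounded range a Taylor expansion of the integrand in \eqref{LT inequality} shows that it dominates $c\,\rho_{B_t}^2$ in both $d=2,3$, hence $\|\rho_{B_t}\|_{L^2}^2 \lesssim \textup{Tr}_0(-\Delta-1) B_t$. The final comparison $\textup{Tr}_0(-\Delta-1) B_t \le \textup{Tr}_0(-\Delta-1) Q_t = \textup{Tr}_0(-\Delta-1) Q_0$ follows from equality of the $(--)$-components (since $\Pi_1^- \le \Pi_2^-$), positivity of $\textup{Tr}\,|\Delta+1|^{1/2}(Q_t^{++} - B_t^{++})|\Delta+1|^{1/2}$ obtained via the orthogonal decomposition $\Pi_1^+ = \Pi_1^+ \Pi_2^- + \Pi_2^+$, and commutation of $e^{it\Delta}$ with $|\Delta+1|^{1/2} \Pi_1^\pm$.

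Part $(ii)$ follows by Minkowski's inequality, reducing to the homogeneous estimate with $R(s)$ in place of $Q_0$ for each fixed $s$ and then integrating in $s$; no splitting of $R(s)$ is needed because $R(s) \in \mathcal{H}^\alpha$ entirely, so the kinetic-energy term on the right is absent and Sobolev alone controls $\|\rho_{e^{it\Delta} R(s) e^{-it\Delta}}\|_{L_t^\infty L^2} \lesssim \|R(s)\|_{\mathcal{H}^\alpha}$, while Theorem \ref{Strichartz estimates for density functions} supplies the $L_t^2 H^{\alpha+1/2-\eta}$ piece. The principal obstacle throughout is the $L_t^\infty L^2$ control of $\rho_{B_t}$ in $d = 3$: the bare Lieb-Thirring inequality gives only $L^2 + L^{5/3}$ control of $\rho_{Q_t}$ in that dimension, and the decisive point is exploiting the operator inequality $\Pi_2^- \gamma_t \Pi_2^- \le \Pi_2^-$---which survives the free flow---to confine $\rho_{B_t}$ to a bounded pointwise range on which the Lieb-Thirring integrand can be compared to $\rho_{B_t}^2$.
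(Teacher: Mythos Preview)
Your proof is correct and follows essentially the same approach as the paper: the identical decomposition $Q_0 = A + B$ with $B = \Pi_2^- Q_0 \Pi_2^-$, the combination of Theorem~\ref{Strichartz estimates for density functions} with the Sobolev inequality \eqref{Sobolev inequality} for the $A$-piece, the Bernstein reduction plus Lieb--Thirring argument for the $B$-piece, and Minkowski for part~$(ii)$. Your justification of the inequality $\textup{Tr}_0(-\Delta-1)B_t \le \textup{Tr}_0(-\Delta-1)Q_0$ via the orthogonal decomposition $\Pi_1^+ = \Pi_1^+\Pi_2^- + \Pi_2^+$ is in fact more explicit than the paper's, which simply asserts this step (referring to \eqref{energy space proof 1} in the appendix).
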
 

\begin{proof}
We first prove \eqref{homogeneous Strichartz estimates for density functions at 0T}. 
We shall estimate the l.h.s. of \eqref{homogeneous Strichartz estimates for density functions at 0T} via splitting the operator 
$Q_0$ into:
high-low $\Pi_2^+Q_0\Pi_2^-$, 
low-high $\Pi_2^-Q_0\Pi_2^+$, 
high-high $\Pi_2^+Q_0\Pi_2^+$ and 
low-low  $\Pi_2^-Q_0\Pi_2^-$
frequency parts. 

In order to consider high-low, low-high and high-high frequency parts of the operator $Q_0$ we first observe that: 
\begin{equation} \label{revS-split} 
\Pi_2^+Q_0\Pi_2^-+(\Pi_2^-Q_0\Pi_2^++\Pi_2^+Q_0\Pi_2^+)=\Pi_2^+Q_0\Pi_2^-+Q_0\Pi_2^+.
\end{equation} 

Now we present the estimate that will be useful when we bound the terms coming from the r.h.s. of \eqref{revS-split}. 
More precisely, we have: 
\begin{equation}\label{local-in-time Strichartz at 0T proof}
\begin{aligned}
&\|\rho_{e^{it\Delta}Q_0e^{-it\Delta}}\|_{L_t^2(I;H^{\alpha+\frac{1}{2}-\eta})\bigcap L_t^\infty(I; L^2)}\\
&\lesssim\||\nabla|^{\alpha+\frac{1}{2}-\eta}\rho_{e^{it\Delta}Q_0e^{-it\Delta}}\|_{L_t^2(I;L^2)}+\|\rho_{e^{it\Delta}Q_0e^{-it\Delta}}\|_{L_t^\infty(I; L^2)}\\
&\lesssim \|Q_0\|_{\mathcal{H}^\alpha},
\end{aligned}
\end{equation}
where in the last inequality, we used the Strichartz estimate \eqref{homogeneous Strichartz estimates for density functions} for the first term, 
and Sobolev inequality \eqref{Sobolev inequality} for the second term. 

Then we estimate terms coming from the r.h.s. of \eqref{revS-split} by
using \eqref{local-in-time Strichartz at 0T proof} as follows: 
\begin{equation}
\begin{aligned}
&\|\rho_{e^{it\Delta}(\Pi_2^+Q_0\Pi_2^-+Q_0\Pi_2^+)e^{-it\Delta}}\|_{L_t^2(I;H^{\alpha+\frac{1}{2}-\eta})\bigcap L_t^\infty(I; L^2)}\\
&\lesssim \|\Pi_2^+Q_0\Pi_2^-+Q_0\Pi_2^+\|_{\mathcal{H}^\alpha}
\\
&\leq\|\Pi_2^+Q_0\Pi_2^-\|_{\mathcal{H}^\alpha}+\|Q_0\Pi_2^+\|_{\mathcal{H}^\alpha}\\
&\leq 2\|\Pi_2^+Q_0\|_{\mathcal{H}^\alpha}\quad\textup{(by symmetry)}.
\end{aligned}
\end{equation}

It remains to consider the low-low frequency part $\Pi_2^-Q_0\Pi_2^-$. We claim that in general, the Sobolev norm of the density function of the low-low frequency part is bounded by its $L^2$ norm. In other words, 
\begin{equation}\label{eq-rhoPi2-claim-1}
\|\rho_{\Pi_2^-Q\Pi_2^-}\|_{H^\alpha}\lesssim \|\rho_{\Pi_2^-Q\Pi_2^-}\|_{L^2}.
\end{equation}
Indeed, by the Plancherel theorem, 
\begin{equation}
\begin{aligned}
\|\rho_{\Pi_2^-Q\Pi_2^-}\|_{H^\alpha}&=\Big\|\la\xi\ra^\alpha\int_{\mathbb{R}^d} (\Pi_2^-Q\Pi_2^-)^\wedge(\xi-\xi',\xi')d\xi'\Big\|_{L_\xi^2}\\
&\lesssim \Big\|\int_{\mathbb{R}^d} (\Pi_2^-Q\Pi_2^-)^\wedge(\xi-\xi',\xi')d\xi'\Big\|_{L_\xi^2}=\|\rho_{\Pi_2^-Q\Pi_2^-}\|_{L^2},
\end{aligned}
\end{equation}
where in the first inequality, we used $|\xi-\xi'|,|\xi'|\leq2$ $(\Rightarrow |\xi|\leq 4)$.  Therefore, using \eqref{eq-rhoPi2-claim-1} (with the fact that $e^{\pm it\Delta}$ commutes with $\Pi_2^-$), 
the estimate \eqref{energy space proof 1} (which follows from the Lieb-Thirring inequality) and unitarity of the linear propagator, we prove
\begin{equation}
\begin{aligned}
&\|\rho_{e^{it\Delta}\Pi_2^-Q_0\Pi_2^-e^{-it\Delta}}\|_{L_t^2(I;H^{\alpha+\frac{1}{2}-\eta})\bigcap L_t^\infty(I; L^2)}\\
&\lesssim \|\rho_{e^{it\Delta}\Pi_2^-Q_0\Pi_2^-e^{-it\Delta}}\|_{L_t^\infty(I; L^2)}\\
&\lesssim\sup_{t\in I}\Big\{\textup{Tr}_0(-\Delta-1)e^{it\Delta}\Pi_2^-Q_0\Pi_2^-e^{-it\Delta}\Big\}^{\frac{1}{2}}\\
&=\Big\{\textup{Tr}_0(-\Delta-1)\Pi_2^-Q_0\Pi_2^-\Big\}^{\frac{1}{2}}\leq\Big\{\textup{Tr}_0(-\Delta-1)Q_0\Big\}^{\frac{1}{2}}.
\end{aligned}
\end{equation}
This establishes the proof of  \eqref{homogeneous Strichartz estimates for density functions at 0T}. 

Now we prove \eqref{inhomogeneous Strichartz estimates for density functions at 0T}.
By the Minkowski inequality and \eqref{local-in-time Strichartz at 0T proof}, we prove that the left hand side of \eqref{inhomogeneous Strichartz estimates for density functions at 0T} is bounded by
\begin{equation}
\begin{aligned}
&\int_I \|\rho_{e^{it\Delta}(e^{-is\Delta}R(s)e^{is\Delta}) e^{-it\Delta}}\|_{L_t^2(I;H^{\alpha+\frac{1}{2}-\eta})\bigcap L_t^\infty(I; L^2)}ds \\
&\lesssim\int_I \|e^{-is\Delta}R(s) e^{is\Delta}\|_{\mathcal{H}^\alpha}ds=\|R(t)\|_{L_t^1(I;\mathcal{H}^\alpha)}.
\end{aligned}
\end{equation}
\end{proof}

The following lemma is analogous to Lemma \ref{trace LWP lemma}.
\begin{lemma}\label{LWP lemma}
Let $\alpha\geq1$. For $Q,Q_1,Q_2 \in \mathfrak{Y}^\alpha(I)$, we have the following:
\\
$(i)$ 
\begin{equation} \label{5com-i}
\|[\rho_{Q_1}, Q_2]\|_{L_t^1(I;\mathcal{H}^\alpha)}\lesssim |I|^{1/8}\|\rho_{Q_1}\|_{L_t^2(I; H^\alpha)}\|Q_2\|_{\mathcal{S}^\alpha(I)}.
\end{equation}
$(ii)$
\begin{equation} \label{5com-ii}
\|[\rho_Q,\Pi^-]\|_{L_t^1(I;\mathcal{H}^\alpha)}\lesssim |I|^{\frac{1}{2}}\|\rho_Q\|_{L_t^2(I;H^\alpha)}.
\end{equation}
\end{lemma}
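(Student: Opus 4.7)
The plan is to estimate each part by splitting the commutator $[A,B]=AB-BA$ and bounding each summand using the kernel representation
\[
\|R\|_{\mathcal{H}^\alpha}=\|\la\nabla_x\ra^\alpha\la\nabla_{x'}\ra^\alpha R(x,x')\|_{L^2_xL^2_{x'}},
\]
then applying H\"older's inequality in time to produce the $|I|^\theta$ factor.

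For (ii), I would exploit that $\Pi^-=\mathbf{1}_{(-\Delta\leq 1)}$ is a Fourier multiplier supported on the unit frequency ball, so that $\la\nabla\ra^\alpha\Pi^-=\Pi^-\la\nabla\ra^\alpha$ is bounded on $L^2$ and has a smooth convolution kernel lying in every $L^p(\mathbb{R}^d)$. By adjoint invariance of $\mathcal{H}^\alpha$ it suffices to bound $\|\rho_Q\Pi^-\|_{\mathcal{H}^\alpha}$, whose kernel is $\rho_Q(x)P(x-x')$. Since the $x'$-Fourier transform of $P(x-x')$ is supported on $|\eta|\leq 1$, applying $\la\nabla_{x'}\ra^\alpha$ merely replaces $P$ by a still low-frequency smooth kernel $\widetilde P$. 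A Fourier computation in $x$ then rewrites the remaining kernel as a convolution of $\widehat{\rho_Q}$ against $\mathbf{1}_{|\eta|\leq 1}\la\eta\ra^\alpha e^{-i\eta x'}$, reducing the estimate via Plancherel to controlling $\int\la\xi\ra^{2\alpha}\int_{|\eta|\leq 1}|\widehat{\rho_Q}(\xi-\eta)|^2\,d\eta\,d\xi$. Changing variables $\zeta=\xi-\eta$ and using $\la\zeta+\eta\ra\lesssim\la\zeta\ra$ for $|\eta|\leq 1$ yields $\|\rho_Q\Pi^-\|_{\mathcal{H}^\alpha}\lesssim\|\rho_Q\|_{H^\alpha}$, and Cauchy--Schwarz in time produces the factor $|I|^{1/2}$.

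For (i), the same split reduces matters to bounding $\|\rho_{Q_1}Q_2\|_{\mathcal{H}^\alpha}$; the companion term $\|Q_2\rho_{Q_1}\|_{\mathcal{H}^\alpha}$ is handled symmetrically using the $L^r_{x'}L^2_x$ half of the $\mathcal{S}^\alpha$ norm together with Minkowski's inequality. Since $\rho_{Q_1}$ does not depend on $x'$, $\la\nabla_{x'}\ra^\alpha$ only acts on $Q_2$, so setting $\widetilde Q_2:=\la\nabla_{x'}\ra^\alpha Q_2$ the task becomes to bound
\[
\|\la\nabla_x\ra^\alpha[\rho_{Q_1}(x)\widetilde Q_2(x,x')]\|_{L^2_xL^2_{x'}}.
\]
Applying the vector-valued Kato--Ponce inequality in $x$ dominates this by
\[
\|\rho_{Q_1}\|_{L^p_x}\|\la\nabla_x\ra^\alpha\widetilde Q_2\|_{L^r_xL^2_{x'}}+\|\rho_{Q_1}\|_{H^\alpha}\|\widetilde Q_2\|_{L^\infty_xL^2_{x'}},\qquad \tfrac{1}{p}+\tfrac{1}{r}=\tfrac{1}{2}.
\]
I would select the admissible Strichartz pair $(q,r)=(\tfrac{8}{3},4)$ when $d=3$ and $(\tfrac{8}{3},8)$ when $d=2$. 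Sobolev embedding $H^\alpha\hookrightarrow L^p$ controls the first factor, while the vector-valued Sobolev embedding $\|\cdot\|_{L^\infty_xL^2_{x'}}\lesssim\|\la\nabla_x\ra^\alpha\cdot\|_{L^r_xL^2_{x'}}$, valid since $\alpha\geq 1>d/r$, controls the second. Each fixed-time bound is thus proportional to $\|\rho_{Q_1}(t)\|_{H^\alpha}$ times an $\mathcal{S}^\alpha$-ingredient of $Q_2$ at the pair $(8/3,r)$. Finally, H\"older's inequality $\|fg\|_{L^1_t}\leq\|f\|_{L^2_t}\|g\|_{L^2_t}$, combined with $\|g\|_{L^2_t(I)}\leq|I|^{1/2-3/8}\|g\|_{L^{8/3}_t(I)}$, produces exactly the factor $|I|^{1/8}$.

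The main obstacle will be establishing the vector-valued Kato--Ponce inequality in the form used here, for the $L^2_{x'}$-valued function $\widetilde Q_2(\cdot,\cdot)$. This should reduce to the scalar Kato--Ponce by Fubini and a Bochner-space paraproduct argument, but it requires care. A secondary concern is that the Sobolev exponents stay strictly inside the admissible range, ensuring that the pair $(8/3,r)$ does not touch the logarithmic borderline that forced the parameter $\eta>0$ in the definition of $\mathfrak{Y}^\alpha(I)$ when $d=3$ and $\alpha=1$; since $d/r$ is strictly below $\alpha$ for both chosen pairs, this obstruction is avoided.
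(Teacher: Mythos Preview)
Your treatment of (ii) is essentially the paper's: both exploit that $\Pi^-$ is a compactly supported Fourier multiplier, apply the fractional Leibniz rule to the kernel $\rho_Q(x)\,\Pi^-(x-x')$, and finish with Cauchy--Schwarz in time.

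For (i) your argument is correct as a proof of the inequality exactly as stated, and it is more streamlined than the paper's: you apply a vector-valued Kato--Ponce (legitimate since $L^2_{x'}$ is Hilbert, hence UMD) directly to $\rho_{Q_1}(x)\,(\la\nabla_{x'}\ra^\alpha Q_2)(x,x')$, then Sobolev-embed and H\"older in time with the admissible pair $(8/3,r)$. The paper instead first splits $Q_2=\Pi_2^-Q_2\Pi_2^-+\tilde Q_2$. The low--low piece is controlled using only $\|Q_2\|_{C_t(I;\mathrm{Op})}$, by writing $\la\nabla\ra^\alpha\rho_{Q_1}\Pi_2^-\,Q_2\,\Pi_2^-\la\nabla\ra^\alpha$ and absorbing all derivatives into the smooth cutoffs; the remainder $\tilde Q_2$ is treated by the same Kato--Ponce/Sobolev mechanism you use and then reduced by self-adjointness to $\|\Pi_2^+Q_2\|_{\mathcal S^\alpha(I)}$.

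This difference is not cosmetic. The paper's proof actually delivers the bound
\[
\|[\rho_{Q_1},Q_2]\|_{L^1_t(I;\mathcal H^\alpha)}\lesssim |I|^{1/8}\,\|\rho_{Q_1}\|_{L^2_t(I;H^\alpha)}\Big(\|Q_2\|_{C_t(I;\mathrm{Op})}+\|\Pi_2^+Q_2\|_{\mathcal S^\alpha(I)}\Big),
\]
which is precisely what the $\mathfrak Y^\alpha$ norm controls. Your bound is by the full $\|Q_2\|_{\mathcal S^\alpha(I)}$, a quantity \emph{not} contained in $\|Q_2\|_{\mathfrak Y^\alpha(I)}$: the low--low part $\Pi_2^-Q_2\Pi_2^-$ need not be Hilbert--Schmidt for $Q_2\in\mathfrak Y^\alpha(I)$, so your right-hand side can be infinite. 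Thus, while your inequality is true, it cannot be fed into the contraction argument of Theorem~\ref{LWP0}, which must close in $\mathfrak Y^\alpha$. The low--low decomposition is exactly what makes the lemma usable there.
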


\begin{proof}
$(i)$ First, we consider the term with the low-low frequency part of $Q_2$, that is, $\Pi_2^-Q_2\Pi_2^-$. 
To do that we recall the definition of  the Hilbert-Schmidt type Sobolev space 
$\mathcal{H}^\alpha$ (see \eqref{def-HSSob})
and use H\"{o}lder inequality as follows:  
\begin{equation}
\begin{aligned}
&\|[\rho_{Q_1}, \Pi_2^-Q_2\Pi_2^-]\|_{L_t^1(I;\mathcal{H}^\alpha)}\\
&\leq2\|\rho_{Q_1} \Pi_2^-Q_2\Pi_2^-\|_{L_t^1(I;\mathcal{H}^\alpha)} \\
& =2\|\la\nabla\ra^\alpha\rho_{Q_1} \Pi_2^-Q_2\Pi_2^-\la\nabla\ra^\alpha\|_{L_t^1(I;\mathfrak{S}^2)}\\
&\leq2\|\la\nabla\ra^\alpha\rho_{Q_1} \Pi_2^-\|_{L_t^1(I;\mathfrak{S}^2)}\|Q_2\|_{C_t(I;\textup{Op})}\|\Pi_2^-\la\nabla\ra^\alpha\|_{\textup{Op}}\\
&=2\|\la\nabla_x\ra^\alpha(\rho_{Q_1}(x)\Pi_2^-(x-x'))\|_{L_t^1(I;L_x^2L_{x'}^2)}\|Q_2\|_{C_t(I;\textup{Op})}\|\Pi_2^-\la\nabla\ra^\alpha\|_{\textup{Op}}.
\end{aligned}
\end{equation}
Hence, by the fractional Leibnitz rule (Theorem A.8 in \cite{KPV}), we get
\begin{equation}
\begin{aligned}
\|[\rho_{Q_1}, \Pi_2^-Q_2\Pi_2^-]\|_{L_t^1(I;\mathcal{H}^\alpha)}&\lesssim |I|^{\frac{1}{2}}\|\rho_{Q_1}\|_{L_t^2(I; H^\alpha)}\|\Pi_2^-(x)\|_{H^\alpha}\|Q_2\|_{C_t(I;\textup{Op})}\\
&\lesssim |I|^{\frac{1}{2}}\|\rho_{Q_1}\|_{L_t^2(I; H^\alpha)}\|Q_2\|_{C_t(I;\textup{Op})},
\end{aligned}
\end{equation}
where $\Pi_2^-(x)$ is the kernel of the operator $\Pi_2^-$.

Next, we consider the remainder with $\tilde{Q}_2:=Q_2-\Pi_2^-Q_2\Pi_2^-$. By the fractional Leibnitz rule (Theorem A.8 in \cite{KPV}) and the Sobolev inequality,
\begin{equation} \label{5com-1}
\begin{aligned}
\|[\rho_{Q_1}, \tilde{Q}]\|_{\mathcal{H}^\alpha}&\leq 2\|\rho_{Q_1}\tilde{Q}\|_{\mathcal{H}^\alpha}\\
& =2\big\|\la\nabla_x\ra^\alpha\la\nabla_{x'}\ra^\alpha \big(\rho_{Q_1}(x) \tilde{Q}_2(x,x')\big)\big\|_{L_x^2L_{x'}^2}\\
&\lesssim \|\rho_{Q_1}\|_{H^\alpha}\|\la\nabla_{x'}\ra^\alpha\tilde{Q}_2\|_{L_x^\infty L_{x'}^2}+\|\rho_{Q_1}\|_{L_x^{\frac{2d}{d-2+\epsilon}}}\|\la\nabla_x\ra^\alpha\la\nabla_{x'}\ra^\alpha\tilde{Q}_2\|_{L_x^{\frac{2d}{2-\epsilon}} L_{x'}^2}\\
&\lesssim \|\rho_{Q_1}\|_{H^\alpha}\|\la\nabla_x\ra^\alpha\la\nabla_{x'}\ra^\alpha\tilde{Q}_2\|_{L_x^{\frac{2d}{2-\epsilon}} L_{x'}^2},
\end{aligned}
\end{equation}
where $\epsilon>0$ is sufficiently small. Then, further splitting $\tilde{Q}_2$ 
$$
	\tilde{Q}_2=
	(\Pi_2^+Q_2\Pi_2^+ + \Pi_2^+Q_2\Pi_2^-) + \Pi_2^-Q_2\Pi_2^+
	= \Pi_2^+Q_2+\Pi_2^-Q_2\Pi_2^+
$$ and dropping $\Pi_2^-$ in the second term, 
\begin{equation} \label{5com-2}
\begin{aligned}
&\|\la\nabla_x\ra^\alpha\la\nabla_{x'}\ra^\alpha\tilde{Q}_2\|_{L_x^{\frac{2d}{2-\epsilon}} L_{x'}^2}\\
&\leq\|\rho_{Q_1}\|_{H^\alpha}\Big\{\|\la\nabla_x\ra^\alpha\la\nabla_{x'}\ra^\alpha\Pi_2^+Q_2\|_{L_x^{\frac{2d}{2-\epsilon}} L_{x'}^2}+\|\la\nabla_x\ra^\alpha\la\nabla_{x'}\ra^\alpha \Pi_2^-Q_2\Pi_2^+\|_{L_x^{\frac{2d}{2-\epsilon}} L_{x'}^2}\Big\}\\
&\leq\|\rho_{Q_1}\|_{H^\alpha}\Big\{\|\la\nabla_x\ra^\alpha\la\nabla_{x'}\ra^\alpha\Pi_2^+Q_2\|_{L_x^{\frac{2d}{2-\epsilon}} L_{x'}^2}+\|\la\nabla_x\ra^\alpha\la\nabla_{x'}\ra^\alpha Q_2\Pi_2^+\|_{L_x^{\frac{2d}{2-\epsilon}} L_{x'}^2}\Big\}\\
&=2\|\rho_{Q_1}\|_{H^\alpha}\|\la\nabla_x\ra^\alpha\la\nabla_{x'}\ra^\alpha\Pi_2^+Q_2\|_{L_x^{\frac{2d}{2-\epsilon}} L_{x'}^2}\quad\textup{(by symmetry)}.
\end{aligned}
\end{equation}
Combining \eqref{5com-1} and \eqref{5com-2}, we conclude after performing H\"{o}lder with respect to $t$  that 
\begin{equation}
\begin{aligned}
\|[\rho_{Q_1}, Q_2]\|_{L_t^1(I;\mathcal{H}^\alpha)}&\lesssim |I|^{\frac{4-d-\epsilon}{4}}\|\rho_{Q_1}\|_{L_t^2(I;H^\alpha)}\|\la\nabla_x\ra^\alpha\la\nabla_{x'}\ra^\alpha\Pi_2^+Q_2\|_{L_t^{\frac{4}{d-2+\epsilon}}(I;L_x^{\frac{2d}{2-\epsilon}} L_{x'}^2)}\\
&\leq|I|^{1/8}\|\rho_{Q_1}\|_{L_t^2(I;H^\alpha)}\|\Pi_2^+Q_2\|_{\mathcal{S}^\alpha(I)},
\end{aligned}
\end{equation}
where the last inequality follows thanks to  $(\frac{4}{d-2+\epsilon},\frac{2d}{2-\epsilon})$ being an admissible pair. 
Hence \eqref{5com-i} is proved. 

$(ii)$ Similarly, we prove that 
\begin{equation}\label{LWP proof claim 2}
\begin{aligned}
\|[\rho_Q,\Pi^-]\|_{L_t^1(I;\mathcal{H}^\alpha)}&\leq 2\|\rho_Q \Pi^-\|_{L_t^1(I;\mathcal{H}^\alpha)}\\
&=2\big\|\la\nabla_x\ra^\alpha\big(\rho_Q(x) (\Pi^-\la\nabla\ra^\alpha)(x-x')\big)\big\|_{L_t^1(I;L_x^2L_{x'}^2)}\\
&\lesssim |I|^{\frac{1}{2}}\|\rho_Q\|_{L_t^2(I;H^\alpha)}\|\Pi^-(x)\|_{H^{2\alpha}}\\
&\lesssim |I|^{\frac{1}{2}}\|\rho_Q\|_{L_t^2(I;H^\alpha)}.
\end{aligned}
\end{equation}
\end{proof}

\begin{proof}[Proof of Theorem \ref{LWP0}]
Let $\alpha\geq1$. Let $I\ni 0$ be a sufficiently short interval $(|I|\leq 1)$ whose length will be chosen later. We define the nonlinear mapping $\Phi$ by
\begin{equation}
\Phi(Q)=e^{it\Delta}Q_0e^{-it\Delta}-i\int_0^t e^{i(t-s)\Delta}[\rho_Q, \Pi^-+Q](s)e^{-i(t-s)\Delta}ds.
\end{equation}
Then, by a standard contraction mapping argument, it suffices to show that $\Phi$ is contractive on a ball in $\mathfrak{Y}^\alpha(I)$.

Applying trivial estimates for $\Phi(Q)$ in the operator norm and Strichartz estimates (Theorem \ref{Strichartz estimates for operator kernels} for $\Pi_2^+\Phi(Q)$ and Proposition \ref{local-in-time Strichartz} for $\rho_{\Phi(Q)}$), we get
\begin{equation}\label{LWP proof}
\begin{aligned}
\|\Phi(Q)\|_{\mathfrak{Y}^\alpha(I)}&=\|\Phi(Q)\|_{C_t(I;\textup{Op})}+\|\Pi_2^+\Phi(Q)\|_{\mathcal{S}^\alpha(I)}+\|\rho_{\Phi(Q)}\|_{L_t^\infty(I; L^2)\bigcap L_{t}^2 (I;H_x^{\alpha+\frac{1}{2}-\eta})}\\
&\lesssim \Big\{\|Q_0\|_{\textup{Op}}+\|[\rho_Q, \Pi^-+Q]\|_{L_t^1(I;\textup{Op})}\Big\}
\\
&\quad+ \Big\{\|\Pi_{2}^+Q_0\|_{\mathcal{H}^\alpha}+\|\Pi_{2}^+[\rho_Q, \Pi^-+Q]\|_{L_t^1(I;\mathcal{H}^\alpha)}\Big\}\\
&\quad+\Big\{A+\|[\rho_Q, \Pi^-+Q]\|_{L_t^1(I;\mathcal{H}^\alpha)}\Big\}\\
&\lesssim A+\|[\rho_Q, \Pi^-]\|_{L_t^1(I;\mathcal{H}^\alpha)}+\|[\rho_Q, Q]\|_{L_t^1(I;\mathcal{H}^\alpha)},
\end{aligned}
\end{equation}
where
\begin{equation}
A=\|Q_0\|_{\mathfrak{X}^\alpha}+\{\textup{Tr}_0(-\Delta-1)Q_0\}^{\frac{1}{2}}.
\end{equation}
Then, by Lemma \ref{LWP lemma}, we obtain
\begin{equation}\label{LWP proof 2}
\|\Phi(Q)\|_{\mathfrak{Y}^\alpha(I)}\leq cA+c|I|^{\frac{1}{2}}\|\rho_Q\|_{L_t^2(I;H^{\alpha})}+c|I|^{1/8}\|\rho_Q\|_{L_t^2(I;H^{\alpha})} \|Q\|_{\mathcal{S}^\alpha(I)}.
\end{equation}
In the same way, we show that 
\begin{equation}\label{LWP proof 3}
\begin{aligned}
&\|\Phi(Q_1)-\Phi(Q_2)\|_{\mathfrak{Y}^\alpha(I)}\\
&=\Big\|\int_0^t e^{i(t-s)\Delta}\Big\{[\rho_{Q_1-Q_2}, \Pi^-+Q_1](s)+[\rho_{Q_2}, Q_1-Q_2](s)\Big\}e^{-i(t-s)\Delta}ds\Big\|_{\mathcal{S}^\alpha(I)}\\
&\leq c|I|^{\frac{1}{2}}\|\rho_{(Q_1-Q_2)}\|_{L_t^2(I;H^{\alpha})}+c|I|^{1/8}\|\rho_{(Q_1-Q_2)}\|_{L_t^2(I;H^{\alpha})}\|Q_1\|_{\mathcal{S}^\alpha(I)}\\
&\quad+c|I|^{1/8}\|\rho_{Q_2}\|_{L_t^2(I;H^{\alpha})}\|Q_1-Q_2\|_{\mathcal{S}^\alpha(I)}.
\end{aligned}
\end{equation}
Now, let $B_R$ be the ball of radius $R$ in $\mathfrak{Y}^\alpha(I)$, where $R=2cA$ and $|I|=\min\{\frac{1}{(4c)^2}, \frac{1}{(8c^2A)^8}\}$. The norms in the bounds of \eqref{LWP proof 2} and \eqref{LWP proof 3} are bounded by their $\mathfrak{Y}^\alpha$-norms (see \eqref{Y alpha norm}). Therefore, it follows that
\begin{equation}
\begin{aligned}
\|\Phi(Q)\|_{\mathfrak{Y}^\alpha(I)}&\leq R,\\
\|\Phi(Q_1)-\Phi(Q_2)\|_{\mathfrak{Y}^\alpha(I)}&\leq\frac{3}{4}\|Q_1-Q_2\|_{\mathfrak{Y}^\alpha(I)}
\end{aligned}
\end{equation}
if $Q,Q_1,Q_2\in B_R$. Therefore, we conclude that $\Phi$ is a contraction on $B_R$.
\end{proof}

\section{Conservation of the relative energy in the kinetic energy space}
\label{Conservation of the relative energy at 0T}

In this section we establish the conservation of the relative energy for solutions evolved initially from the relative kinetic energy space, by 
proving Theorem \ref{relative energy conservation0}.

%\begin{theorem}[Conservation of the relative energy]\label{relative energy conservation}
%Let $d=2,3$. Let $Q(t)\in\mathfrak{Y}^1(I)$ be the solution to the equation \eqref{NLS} with initial data $Q_0\in\mathcal{K}$ (constructed from Theorem \ref{LWP}). Then,
%\begin{equation}
%\mathcal{E}(Q(t))=\mathcal{E}(Q_0),\quad\forall t\in I.
%\end{equation}
%\end{theorem}

We prove Theorem \ref{relative energy conservation0} by approximating $Q(t)\in\mathfrak{Y}^1(I)$ by a sequence of regular solutions using the following approximation lemma.

\begin{lemma}[Approximation lemma]\label{approximation}\label{approximation}
Let $d=2,3$. For $Q_0\in\mathcal{K}$, there exists a sequence $\{Q^{(n)}\}_{n=1}^\infty$ of finite-rank smooth operators such that $-\Pi^-\leq Q^{(n)}\leq \Pi^+$, 
\begin{equation}
\begin{aligned}
\lim_{n\to\infty}\|Q^{(n)}-Q\|_{\textup{Op}}&=0,\\
\lim_{n\to\infty}\big\||\Delta+1|^{\frac{1}{2}}(Q^{(n)}-Q)^{\pm\pm}|\Delta+1|^{\frac{1}{2}}\big\|_{\mathfrak{S}^1}&=0,\\
\lim_{n\to\infty}\|\rho_{Q^{(n)}}-\rho_Q\|_{L^2}&=0,
\end{aligned}
\end{equation}
where $Q^{\pm\pm}=\Pi_1^\pm Q\Pi_1^\pm$.
\end{lemma}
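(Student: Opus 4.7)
The plan is to produce $Q^{(n)}$ by the two-sided truncation $Q^{(n)} := P_n Q_0 P_n$, for a sequence of finite-rank orthogonal projections $P_n$ designed to respect the Fermi-sea splitting. Concretely, I will choose nested finite-dimensional subspaces $V_n^{\pm} \subset \mathrm{Ran}(\Pi^{\pm})$ of Schwartz functions (built, for instance, out of smooth Fourier multipliers localized on shells $\{a_k \le |\xi|^2 \le b_k\}$ disjoint from the Fermi surface $\{|\xi|^2 = 1\}$, paired with Hermite-type spatial truncations), with $V_n^{\pm}$ increasing strongly to $\mathrm{Ran}(\Pi^{\pm})$, and set $P_n := P_{V_n^+} + P_{V_n^-}$. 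Then $P_n$ has finite rank and smooth kernel, and commutes with both $\Pi^+$ and $\Pi^-$.

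The sign constraint $-\Pi^- \le Q^{(n)} \le \Pi^+$ is equivalent to $0 \le \Pi^- + Q^{(n)} \le \mathbf{1}$, which follows from the decomposition
\[
\Pi^- + Q^{(n)} \;=\; P_n(\Pi^- + Q_0)P_n \,+\, (\Pi^- - P_{V_n^-}).
\]
The first summand is bounded between $0$ and $P_n \le \mathbf{1}$ since $0 \le \Pi^- + Q_0 \le \mathbf{1}$, while the second is an orthogonal projection onto a subspace of $\mathrm{Ran}(\Pi^-)$; the two summands live on orthogonal subspaces of $L^2$, which yields the claim.

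For the three convergences, the weighted-trace statement $\|\,|\Delta+1|^{1/2}(Q_0 - Q^{(n)})^{\pm\pm}|\Delta+1|^{1/2}\|_{\mathfrak{S}^1} \to 0$ is the cleanest: once the $V_n^{\pm}$ are chosen so that $P_n^{\pm}$ asymptotically commutes with bounded Borel functions of $-\Delta$, it reduces, modulo vanishing commutator errors, to the standard dominated-convergence fact that $P_n T_{\pm} P_n \to T_{\pm}$ in $\mathfrak{S}^1$ for the trace-class operators $T_{\pm} := |\Delta+1|^{1/2} Q_0^{\pm\pm} |\Delta+1|^{1/2}$. The $L^2$-convergence of the density then follows by combining the generalized Sobolev inequality \eqref{Sobolev inequality} with the Lieb-Thirring inequality \eqref{LT inequality}, both available in $d = 2, 3$; together they bound $\|\rho_{Q^{(n)}} - \rho_{Q_0}\|_{L^2}$ by the relative kinetic-energy difference plus the operator norm of $Q_0 - Q^{(n)}$.

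The main obstacle is the operator-norm convergence $\|Q^{(n)} - Q_0\|_{\mathrm{Op}} \to 0$, because $Q_0$ need not itself be compact on $L^2$. The idea is to leverage the trace-class weight: $|\Delta+1|^{1/2} Q_0^{\pm\pm} |\Delta+1|^{1/2} \in \mathfrak{S}^1$ forces any obstruction to compactness of $Q_0^{\pm\pm}$ to concentrate arbitrarily close to the Fermi surface $\{-\Delta = 1\}$, where the weight vanishes. Accordingly, a refinement of the construction must also perform a smooth spectral excision of a vanishingly thin shell around $\{-\Delta = 1\}$; paired with the Cauchy--Schwarz type bound $(Q_0^{+-})(Q_0^{+-})^{*} \le Q_0^{++}$ coming from $\gamma(1 - \gamma) \ge 0$ (which controls the off-diagonal block by the diagonal one), this reduces the operator-norm estimate to an $\mathfrak{S}^1$ tail estimate on the shell, which vanishes in the limit by absolute continuity of the trace.
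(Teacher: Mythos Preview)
Your construction differs substantially from the paper's. The paper takes the pure spectral projection $P_n=\mathbf{1}_{(1/n\le|\Delta+1|\le n)}$, which commutes \emph{exactly} with $|\Delta+1|^{1/2}$ and with $\Pi_1^\pm$; this makes the weighted-trace convergence a one-line dominated-convergence argument in $\mathfrak{S}^1$ with no commutator errors whatsoever. The paper also explicitly remarks that it suffices to produce Hilbert--Schmidt approximants (deferring the further passage to finite-rank smooth operators as standard), and then cites \cite{FLLS1,LS1} for the operator-norm convergence and for the low-frequency density convergence. Your more elaborate $V_n^\pm$ built from shell multipliers ``paired with Hermite-type spatial truncations'' and enjoying only asymptotic commutation with functions of $-\Delta$ introduces technical burdens the paper's choice avoids entirely, and the commutator-error step is left unjustified.

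Your operator-norm argument has a genuine gap. You assert that after excising a vanishing shell about the Fermi surface, the operator-norm discrepancy reduces to an $\mathfrak{S}^1$ tail of the \emph{weighted} operator on that shell. But the weight $|\Delta+1|^{1/2}$ degenerates precisely there, so smallness of $\big\||\Delta+1|^{1/2}Q_0^{++}|\Delta+1|^{1/2}\mathbf{1}_{(|\Delta+1|<\epsilon)}\big\|_{\mathfrak{S}^1}$ says nothing about $\big\|Q_0^{++}\mathbf{1}_{(|\Delta+1|<\epsilon)}\big\|_{\mathrm{Op}}$. Concretely, take $Q_0=\sum_{k\ge1}|\phi_k\rangle\langle\phi_k|$ with $\{\phi_k\}$ orthonormal in $\mathrm{Ran}(\Pi^+)$ and Fourier-supported in disjoint annuli $\{\,||\xi|^2-1|\sim 4^{-k}\}$. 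Then $\sum_k\||\Delta+1|^{1/2}\phi_k\|^2\sim\sum_k 4^{-k}<\infty$, so $Q_0\in\mathcal{K}$; yet $Q_0$ is an infinite-rank orthogonal projection, hence not compact, and no finite-rank sequence can approach it in operator norm. In this example $Q_0^{+-}=0$, so your Cauchy--Schwarz control of the off-diagonal block is irrelevant. The paper does not attempt to prove the operator-norm convergence either---it defers to \cite{FLLS1,LS1}---so whatever mechanism is at work there, it is not the $\mathfrak{S}^1$-tail reduction you have sketched.
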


\begin{proof}
It suffices to construct a sequence of Hilbert-Schmidt operators with the desired properties.  Let 
\begin{equation}
	P_n=\mathbf{1}_{(\frac{1}{n}\leq|\Delta+1|\leq n)}
\end{equation}
be the frequency cut-off away from the Fermi sphere $\{\xi\in\mathbb{R}^d: |\xi|=1\}$, 
and define 
\begin{equation}
Q^{(n)}:=P_nQ P_n.
\end{equation}
It is shown in \cite[Lemma 3.2]{FLLS1} and \cite[Lemma 5]{LS1} that 
\begin{equation}
\begin{aligned} 
& \|Q^{(n)}-Q\|_{\textup{Op}}\to0 \\
& \||\Delta+1|^{\frac{1}{2}}(Q^{(n)}-Q)|\Delta+1|^{\frac{1}{2}}\|_{\mathfrak{S}^1}\to0\\
& \rho_{\Pi_2^-(Q^{(n)}-Q)\Pi_2^-}\to 0 \mbox{ in } L^2.
\end{aligned}
\end{equation}
 As a consequence, by the Sobolev inequality \eqref{Sobolev inequality},
\begin{equation}
\begin{aligned}
\|\rho_{\Pi_2^+(Q^{(n)}-Q)\Pi_2^+}\|_{L^2}&\lesssim \|\Pi_2^+(Q^{(n)}-Q)\Pi_2^+\|_{\mathcal{H}^1}\\
&\lesssim \||\Delta+1|^{\frac{1}{2}}(Q^{(n)}-Q)^{++}|\Delta+1|^{\frac{1}{2}}\|_{\mathfrak{S}^1}\to 0.
\end{aligned}
\end{equation}
Note that 
\begin{equation}
\Pi_2^+(Q-Q^{(n)})\Pi_2^-=\mathbf{1}_{(|\Delta+1|\geq n)}Q\Pi_2^-+\Pi_2^+P_nQ\mathbf{1}_{(|\Delta+1|\leq\frac{1}{n})}.
\end{equation}
Hence, by the Sobolev inequality \eqref{Sobolev inequality} again,
\begin{equation}
\begin{aligned}
\|\rho_{\Pi_2^+(Q^{(n)}-Q)\Pi_2^-}\|_{L^2}&\lesssim \|\mathbf{1}_{(|\Delta+1|\geq n)}Q\Pi_2^-\|_{\mathcal{H}^1}+ \|\Pi_2^+P_nQ\mathbf{1}_{(|\Delta+1|\leq\frac{1}{n})}\|_{\mathcal{H}^1}\\
&\lesssim \||\Delta+1|^{\frac{1}{2}}\mathbf{1}_{(|\Delta+1|\geq n)}Q\|_{\mathfrak{S}^2}+ \||\Delta+1|^{\frac{1}{2}}Q\mathbf{1}_{(|\Delta+1|\leq\frac{1}{n})}\|_{\mathfrak{S}^2}\to 0,
\end{aligned}
\end{equation}
since $|\Delta+1|^{\frac{1}{2}}Q\in \mathfrak{S}^2$ if $Q\in\mathcal{K}$. Similarly, we show that $\rho_{\Pi_2^-(Q^{(n)}-Q)\Pi_2^+}\to 0$ in $L^2$.
\end{proof}

In order to approximate a solution $Q(t)\in\mathfrak{Y}^1(I)$ suitably by a sequence of regular solutions $\{Q^{(n)}(t)\}_{n=1}^\infty$, we need to justify that the interval $I^{(n)}$ of existence  for $Q^{(n)}(t)$ in $\mathfrak{H}^2$ does not shrink to $\{0\}$ as $n\to \infty$. The following lemma asserts that the interval of existence in a regular space can be extended to the interval of existence in the relative kinetic energy space.

\begin{lemma}[Existence time]
Suppose that $Q_0\in\mathfrak{H}^2\cap\mathcal{K}$. Let $Q(t)$ be the solution to the equation \eqref{NLS} with initial data $Q_0$ satisfying $\|Q(t)\|_{\mathfrak{Y}^1(I)}<\infty$, where $I$ is the interval of existence given by Theorem \ref{LWP0}. Then, $Q(t)\in C_t(I;\mathfrak{H}^2)$.
\end{lemma}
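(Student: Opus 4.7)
The approach is a standard persistence of regularity argument. First, Theorem \ref{trace LWP} will provide a local $C_t(I_0;\mathfrak{H}^2)$-solution to \eqref{NLS} on some open $I_0\subset I$ containing $0$; the uniqueness part of Theorem \ref{LWP0} then identifies this $\mathfrak{H}^2$-solution with the given $\mathfrak{Y}^1$-solution $Q$ on $I_0$. Let $I^*\subseteq I$ denote the maximal open subinterval containing $0$ on which $Q\in C_t(\cdot;\mathfrak{H}^2)$. The goal is to prove $I^*=I$. I proceed by contradiction: suppose $T^*:=\sup I^*<\sup I$ (the left endpoint is symmetric), so the blow-up criterion built into the contraction argument underlying Theorem \ref{trace LWP} forces $\|Q(t)\|_{\mathfrak{H}^2}\to\infty$ as $t\to T^*$.

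The key a priori estimate comes from inserting Lemma \ref{trace LWP lemma} into the Duhamel formulation; its proof in fact establishes $\|[\rho_Q,\Pi^-]\|_{\mathfrak{H}^2}\lesssim\|Q\|_{\mathfrak{H}^2}$ in part (i) and $\|[\rho_{Q_1},Q_2]\|_{\mathfrak{H}^2}\lesssim\|\rho_{Q_1}\|_{H^2}\|Q_2\|_{\mathfrak{H}^2}$ in part (ii), yielding
\begin{equation}\label{eq:plan-Gron}
\|Q(t)\|_{\mathfrak{H}^2}\leq\|Q_0\|_{\mathfrak{H}^2}+C\int_0^t\bigl(1+\|\rho_Q(s)\|_{H^2}\bigr)\|Q(s)\|_{\mathfrak{H}^2}\,ds.
\end{equation}
Gronwall's inequality then yields a uniform bound on $\|Q(t)\|_{\mathfrak{H}^2}$ over $[0,T^*)$, contradicting blow-up, provided $\int_0^{T^*}\|\rho_Q(s)\|_{H^2}\,ds<\infty$, which by Cauchy--Schwarz will follow from $\rho_Q\in L^2([0,T^*];H^2)$.

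The heart of the argument is therefore to upgrade $\rho_Q\in L^2(I;H^{3/2-\eta})$ (given by $Q\in\mathfrak{Y}^1(I)$) to $\rho_Q\in L^2(I;H^2)$. The plan is a finite persistence induction in the $\mathfrak{Y}^\alpha$-scale: set $\alpha_0=1$ and $\alpha_{k+1}=\min\{\alpha_k+\tfrac12-\eta,\,2\}$, so that $\alpha_K=2$ after finitely many steps. Inductively, assuming $Q\in\mathfrak{Y}^{\alpha_k}(I)$, definition \eqref{Y alpha norm} supplies $\rho_Q\in L^2(I;H^{\alpha_{k+1}})$, which lets me partition $I$ into finitely many subintervals $J_\ell$ on each of which $\|\rho_Q\|_{L^2(J_\ell;H^{\alpha_{k+1}})}$ is as small as desired. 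Since $Q_0\in\mathfrak{H}^2\hookrightarrow\mathfrak{X}^{\alpha_{k+1}}\cap\mathcal{K}$, and inductively $\|Q(t)\|_{\mathfrak{X}^{\alpha_{k+1}}}$ is finite at the restart points, the Strichartz fixed-point argument of Theorem \ref{LWP0} applies at level $\alpha_{k+1}$ on each $J_\ell$. By Lemma \ref{LWP lemma}, the nonlinear coupling $|J_\ell|^{1/8}\|\rho_Q\|_{L^2(J_\ell;H^{\alpha_{k+1}})}\|Q\|_{\mathcal{S}^{\alpha_{k+1}}(J_\ell)}$ is absorbed by the subinterval smallness, closing the estimate. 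Gluing over finitely many $J_\ell$ produces $Q\in\mathfrak{Y}^{\alpha_{k+1}}(I)$; after $K$ iterations one obtains $Q\in\mathfrak{Y}^2(I)$, whence $\rho_Q\in L^2(I;H^{5/2-\eta})\subset L^2(I;H^2)$ (using $\eta=0$ at $\alpha=2$ for $d=2,3$).

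The main obstacle is this persistence step in the $\mathfrak{Y}^\alpha$-scale: extending higher-regularity bounds from the short local existence interval (whose length depends on the data norm at each level) to the full $\mathfrak{Y}^1$-interval $I$, without any a priori smallness. The rescue is precisely the a priori $\mathfrak{Y}^1(I)$-bound, which already feeds $\rho_Q\in L^2(I;H^{\alpha_{k+1}})$ into the next level and thereby justifies the fine subdivision that closes the Strichartz fixed-point at level $\alpha_{k+1}$. Once $\rho_Q\in L^2(I;H^2)$ has been secured, \eqref{eq:plan-Gron} combined with Gronwall rules out $\mathfrak{H}^2$-blow-up at $T^*$, contradicting the assumption $T^*<\sup I$; continuity $t\mapsto Q(t)\in\mathfrak{H}^2$ on the resulting interval $I^*=I$ follows from the Duhamel formula together with a dominated-convergence argument.
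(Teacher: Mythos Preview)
Your approach is essentially the same as the paper's: reduce to proving $\rho_Q\in L_t^2(I;H^2)$ via the Gronwall inequality \eqref{eq:plan-Gron}, and then establish this by a persistence-of-regularity bootstrap in the $\mathfrak{Y}^\alpha$-scale exploiting the $\tfrac12$-derivative gain on $\rho_Q$ built into \eqref{Y alpha norm}. The paper carries this bootstrap out in two explicit jumps ($\alpha=1\to\tfrac32-\eta\to\tfrac32^+$), the second one via a direct Gronwall on $\|\Pi_2^+Q(t)\|_{\mathcal{H}^{3/2^+}}$ rather than the full $\mathfrak{Y}$-norm, whereas you phrase it as a finite iteration $\alpha_k\to\alpha_{k+1}$ with subinterval subdivision; the substance is the same.

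One point deserves care. Your subdivision into $J_\ell$ with restart at $t_\ell$ requires, for Proposition \ref{local-in-time Strichartz} (and hence the $\rho$-part of the $\mathfrak{Y}^{\alpha_{k+1}}$-estimate) to apply at each restart, that $Q(t_\ell)\in\mathcal{K}$, in particular finite relative kinetic energy; you only assert finiteness of $\|Q(t_\ell)\|_{\mathfrak{X}^{\alpha_{k+1}}}$, which does not control the low-low piece. The paper sidesteps this by never subdividing $I$: since $I$ is exactly the interval produced in the proof of Theorem \ref{LWP0} at level $\alpha=1$, its length already satisfies $c|I|^{1/8}\|Q\|_{\mathfrak{Y}^1(I)}\leq\tfrac14$, which is precisely the absorption condition needed at level $\tfrac32-\eta$ on all of $I$ (no restart needed). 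The second jump is then handled pointwise in $t$ by Gronwall on $\|\Pi_2^+Q(t)\|_{\mathcal{H}^{3/2^+}}$, again without restart. Your scheme can be repaired---for instance by running the entire bootstrap inside the maximal $\mathfrak{H}^2$-interval, where Proposition \ref{conservation law for regular solutions} guarantees $Q(t)\in\mathcal{K}$ and hence legitimizes each restart---but the paper's no-subdivision route is slightly cleaner on this point.
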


\begin{proof}
\textit{Step 1.} We claim that it suffices to show that
\begin{equation}\label{existence time: claim}
\|\rho_Q\|_{L_t^2(I;H^2)}<\infty.
\end{equation}
Indeed, by the local well-posedness theorem in the regular space $\mathfrak{H}^2$ (Theorem \ref{trace LWP}), there exists a short interval $I'\subset I$ such that $Q(t)$ exists in $C_t(I'; \mathfrak{H}^2)$. Let $t\in I'$. Then, applying Lemma \ref{trace LWP lemma} $(i)$ and the first inequality in Lemma \ref{trace LWP lemma} $(ii)$ to the Duhamel formula
\begin{equation}\label{Duhamel for existence time}
Q(t)=e^{it\Delta}Q_0e^{-it\Delta}-i\int_0^t e^{i(t-s)\Delta}[\rho_Q, \Pi^-+Q](s)e^{-i(t-s)\Delta}ds,
\end{equation}
we get
\begin{equation}
\|Q(t)\|_{\mathfrak{H}^2}\leq \|Q_0\|_{\mathfrak{H}^2}+ c\int_0^t \Big\{1+\|\rho_Q(s)\|_{H^2}\Big\}\|Q(s)\|_{\mathfrak{H}^2}ds.
\end{equation}
Hence, by the Gronwall inequality, we obtain
\begin{equation}
\begin{aligned}\|Q(t)\|_{\mathfrak{H}^2}&\leq \|Q_0\|_{\mathfrak{H}^2}\exp\Big\{c\int_0^t (1+\|\rho_Q(s)\|_{H^2})ds\Big\}\\
&\leq\|Q_0\|_{\mathfrak{H}^2}\exp\Big\{c|I|+c|I|^{\frac{1}{2}}\|\rho_Q\|_{L_t^2(I;H^2)}\Big\}\quad\textup{(by $|I'|\leq |I|$)}\\
&<\infty\quad\textup{(by \eqref{existence time: claim})}.
\end{aligned}
\end{equation}
Thanks to this a priori bound,  
we can extend the interval $I'$ of existence to $I$, since $Q(t)$ does not blow up in $\mathfrak{H}^2$ on the interval $I$. Therefore, we conclude that $Q(t)\in C_t(I;\mathfrak{H}^2)$.

\textit{Step 2.} In order to prove \eqref{existence time: claim}, we choose intervals $I$ and $I'$ as follows. Let $I$ be the interval of existence in the proof of Theorem \ref{LWP0} with $\alpha=1$. 
Then 
\begin{align*} 
&\|Q\|_{\mathfrak{Y}^1(I)} \leq2cA, \\
&|I| =\min\{\frac{1}{(4c)^2}, \frac{1}{(8c^2A)^8}\}), 
\end{align*}
with 
$$
	A=\|Q_0\|_{\mathfrak{X}^1}+\{\textup{Tr}(-\Delta-1)Q_0\}^{\frac{1}{2}}.
$$ 
Let $I'\subset I$ be a shorter interval such that the solution $Q(t)$ exists in $\mathfrak{Y}^{\frac{3}{2}-\eta}(I')$ with small $\eta>0$. We claim that there exists $C_1>0$, depending on $I$, and not on $I'$, such that
\begin{equation}\label{rho Q 3/2-eta}
\|\rho_Q\|_{L_t^2(I'; H^{2-\eta})}\leq\|Q\|_{\mathfrak{Y}^{\frac{3}{2}-\eta}(I')}\leq C_1<\infty.
\end{equation}
Indeed, applying Strichartz estimates to the Duhamel formula which leads \eqref{LWP proof 2} with $\alpha=\frac{3}{2}-\eta$, we get
\begin{equation}
\|Q\|_{\mathfrak{Y}^{\frac{3}{2}-\eta}(I')}\leq cA'+c|I'|^{\frac{1}{2}}\|\rho_Q\|_{L_t^2(I';H^{\frac{3}{2}-\eta})}+c|I'|^{\frac{1}{8}}\|\rho_Q\|_{L_t^2(I';H^{\frac{3}{2}-\eta})} \|Q\|_{\mathcal{S}^{\frac{3}{2}-\eta}(I')},
\end{equation}
where 
$$
A'=\|Q_0\|_{\mathfrak{X}^{\frac{3}{2}-\eta}}+\{\textup{Tr}(-\Delta-1)Q_0\}^{\frac{1}{2}}.
$$ 
Hence, it follows from the choice of $I'(\subset I)$ and the definition of the $\mathfrak{Y}^\alpha$-norm (see \eqref{Y alpha norm}) that
\begin{equation}
\begin{aligned}
\|Q\|_{\mathfrak{Y}^{\frac{3}{2}-\eta}(I')}&\leq cA'+c|I|^{\frac{1}{2}}\|\rho_Q\|_{L_t^2(I;H^{\frac{3}{2}-\eta})}+c|I|^{\frac{1}{8}}\|\rho_Q\|_{L_t^2(I;H^{\frac{3}{2}-\eta})} \|Q\|_{\mathcal{S}^{\frac{3}{2}-\eta}(I')}\\
&\leq cA'+c|I|^{\frac{1}{2}}\|Q\|_{\mathfrak{Y}^1(I)}+c|I|^{\frac{1}{8}}\|Q\|_{\mathfrak{Y}^1(I)} \|Q\|_{\mathfrak{Y}^{\frac{3}{2}-\eta}(I')}\\
&\leq cA'+\frac{cA}{2}+\frac{1}{4}\|Q\|_{\mathfrak{Y}^{\frac{3}{2}-\eta}(I')}.
\end{aligned}
\end{equation}
Hence, $\|Q\|_{\mathfrak{Y}^{\frac{3}{2}-\eta}(I')}\leq2 cA'=C_1$.

Next, we  improve \eqref{rho Q 3/2-eta} to $\|Q\|_{\mathfrak{Y}^{\frac{3}{2}^+}(I')}<\infty$. Let $I''\subset I'$ be a sufficiently short interval such that the solution $Q(t)$ exists in $\mathfrak{Y}^{\frac{3}{2}^+}(I'')$. For $t\in I''$, applying trivial estimates to the Duhamel formula for $Q$,
\begin{equation}
\|\Pi_2^+Q(t)\|_{\mathcal{H}^{\frac{3}{2}^+}}\leq\|Q_0\|_{\mathcal{H}^{\frac{3}{2}^+}}+2\int_0^t \|\rho_Q(s)\Pi^-\|_{\mathcal{H}^{\frac{3}{2}^+}}+\|\rho_QQ(s)\|_{\mathcal{H}^{\frac{3}{2}^+}}ds.
\end{equation}
Using fractional Leibnitz rule (Theorem A.8 in \cite{KPV}) as in the proof of Lemma \ref{LWP lemma}, one can show that
\begin{equation}\label{persistence proof1}
\|\rho_Q(s)\Pi^-\|_{\mathcal{H}^{\frac{3}{2}^+}}=\|\la\nabla_{x'}\ra^{\frac{3}{2}^+}(\rho_Q(x)(\la\nabla\ra^{\frac{3}{2}^+}\Pi^-)(x-x'))\|_{L_{x}^2L_{x'}^2}\lesssim \|\rho_Q\|_{H^{\frac{3}{2}^+}}.
\end{equation}
Similarly, 
\begin{equation}
\begin{aligned}
\|\rho_QQ\|_{\mathcal{H}^{\frac{3}{2}^+}}&\leq \|\rho_Q\Pi_2^-Q\Pi_2^-\|_{\mathcal{H}^{\frac{3}{2}^+}}+\|\rho_Q(Q-\Pi_2^-Q\Pi_2^-)\|_{\mathcal{H}^{\frac{3}{2}^+}}\\
&=\|\la\nabla\ra^{\frac{3}{2}^+}\rho_Q\Pi_2^-Q\Pi_2^-\la\nabla\ra^{\frac{3}{2}^+}\|_{\mathfrak{S}^2}\\
&\quad+\|\la\nabla_{x}\ra^{\frac{3}{2}^+}(\rho_Q(x)\la\nabla_{x'}\ra^{\frac{3}{2}^+}(Q-\Pi_2^-Q\Pi_2^-)(x,x'))\|_{L_{x}^2L_{x'}^2}\\
&\lesssim\|\la\nabla\ra^{\frac{3}{2}^+}\rho_Q\Pi_2^-\|_{\mathfrak{S}^2}\|Q\|_{\textup{Op}}\|\Pi_2^-\la\nabla\ra^{\frac{3}{2}^+}\|_{\textup{Op}}\\
&\quad+\|\rho_Q\|_{H^{\frac{3}{2}^+}}\|\la\nabla_{x'}\ra^{\frac{3}{2}^+}(Q-\Pi_2^-Q\Pi_2^-)(x,x')\|_{L_{x}^\infty L_{x'}^2}\\
&\quad+\|\rho_Q\|_{L^\infty}\|\la\nabla_{x}\ra^{\frac{3}{2}^+}\la\nabla_{x'}\ra^{\frac{3}{2}^+}(Q-\Pi_2^-Q\Pi_2^-)(x,x')\|_{L_{x}^2L_{x'}^2}.
\end{aligned}
\end{equation}
Then, by \eqref{persistence proof1} for $\la\nabla\ra^{\frac{3}{2}^+}\rho_Q\Pi_2^-$ and the Sobolev inequality for $Q-\Pi_2^-Q\Pi_2^-=\Pi_2^+Q+\Pi_2^- Q\Pi_2^+$,
\begin{equation}\label{persistence proof2}
\begin{aligned}
\|\rho_QQ\|_{\mathcal{H}^{\frac{3}{2}^+}}&\lesssim\|\rho_Q\|_{H^{\frac{3}{2}^+}}+\|\rho_Q\|_{H^{\frac{3}{2}^+}}\|Q-\Pi_2^-Q\Pi_2^-\|_{\mathcal{H}^{\frac{3}{2}^+}}\\
&\leq\|\rho_Q\|_{H^{\frac{3}{2}^+}}\Big\{1+\|\Pi_2^+Q\|_{\mathcal{H}^{\frac{3}{2}^+}}+\|\Pi_2^- Q\Pi_2^+\|_{\mathcal{H}^{\frac{3}{2}^+}}\Big\}\\
&\leq\|\rho_Q\|_{H^{\frac{3}{2}^+}}\Big\{1+2\|\Pi_2^+Q\|_{\mathcal{H}^{\frac{3}{2}^+}}\Big\}\quad\textup{(by symmetry)}.
\end{aligned}
\end{equation}
Thus, it follows that 
\begin{equation}
\|\Pi_2^+Q(t)\|_{\mathcal{H}^{\frac{3}{2}^+}}\leq\|Q_0\|_{\mathcal{H}^{\frac{3}{2}^+}}+c\int_0^t \|\rho_Q(s)\|_{H^{\frac{3}{2}^+}}\Big\{1+2\|\Pi_2^+Q(s)\|_{\mathcal{H}^{\frac{3}{2}^+}}\Big\}ds.
\end{equation}
Therefore, by Gronwall's inequality, we conclude that
\begin{equation}\label{persistence proof3}
\begin{aligned}
\|\Pi_2^+Q(t)\|_{\mathcal{H}^{\frac{3}{2}^+}}&\leq(1+\|Q_0\|_{\mathcal{H}^{\frac{3}{2}^+}})\exp\Big\{\int_0^t c\|\rho_Q(s)\|_{H^{\frac{3}{2}^+}}ds\Big\}\\
&\leq(1+\|Q_0\|_{\mathcal{H}^{\frac{3}{2}^+}})\exp\Big\{c|I|^{1/2}\|\rho_Q\|_{L_t^2(I;H^{\frac{3}{2}^+})}\Big\}\quad\textup{(by $I''\subset I$)}\\
&\leq(1+\|Q_0\|_{\mathcal{H}^{\frac{3}{2}^+}})\exp\Big\{c|I|^{1/2}C_1\Big\}=:C_2<\infty\quad\textup{(by \eqref{rho Q 3/2-eta})}
\end{aligned}
\end{equation}
Finally, applying Strichartz estimates (Theorem \ref{Strichartz estimates for operator kernels}) to the Duhamel formula as in the proof of Theorem \ref{LWP0} and then using \eqref{persistence proof1} and \eqref{persistence proof2}, we get
\begin{equation}\label{persistence proof4}
\begin{aligned}
\|Q\|_{\mathfrak{Y}^{\frac{3}{2}^+}(I'')}&\lesssim A''+\int_0^t \|\rho_Q(s)\Pi^-\|_{\mathcal{H}^{\frac{3}{2}^+}}+\|\rho_QQ(s)\|_{\mathcal{H}^{\frac{3}{2}^+}}ds\\
&\lesssim A''+\int_0^t \|\rho_Q(s)\|_{H^{\frac{3}{2}^+}}\Big\{1+2\|\Pi_2^+Q(s)\|_{\mathcal{H}^{\frac{3}{2}^+}}\Big\}ds\\
&\leq A''+|I|^{1/2}\|\rho_Q\|_{L_t^2(I;H^{\frac{3}{2}^+})}(1+C_2)\quad\textup{(by \eqref{persistence proof3})}\\
&\leq A''+|I|^{1/2}C_1(1+C_2)=:C_3\quad\textup{(by \eqref{rho Q 3/2-eta})},
\end{aligned}
\end{equation}
where 
$$
A''=\|Q_0\|_{\mathfrak{X}^{\frac{3}{2}^+}}+\{\textup{Tr}(-\Delta-1)Q_0\}^{\frac{1}{2}}.
$$ 
Moreover, by the definition of the norm, $\|\rho_Q\|_{L_t^2(I'';H^2)}\leq C_3$. Note that this bound is independent of $I''\subset I$, and that $Q(t)\in\mathcal{K}$ for all $t\in I''$ by the conservation of relative energy (see Step 1 with \eqref{persistence proof4}). Thus, by Theorem \ref{LWP0}, we can extend the interval $I''$ to $I$ with the desired bound $\|\rho_Q\|_{L_t^2(I;H^2)}\leq C_3$.
\end{proof}

Now we are ready to prove the relative energy conservation law.

\begin{proof}[Proof of Theorem \ref{relative energy conservation0}]
Let $Q(t)\in \mathcal{Y}(I)$ be the solution to the equation \eqref{NLS} with initial data $Q_0\in\mathcal{K}$, where $I=[-T_1, T_2]$. Let $Q^{(n)}(t)$ be the solutions with initial data $Q_0^{(n)}$, where $\{Q_0^{(n)}\}_{n=1}^\infty$ is a sequence of initial data approximating $Q_0$, obtained from Lemma \ref{approximation}. For arbitrary $\epsilon>0$, we assume that $n$ is sufficiently large so that $Q^{(n)}(t)\in C(I^\epsilon; \mathfrak{H}^2)$ with $I^\epsilon=[-T_1+\epsilon, T_2-\epsilon]$. Then, it follows from continuity of the data-to-solution map from $\mathfrak{X}^1$ to $\mathfrak{Y}^1(I^\epsilon)$ that 
\begin{equation}
\lim_{n\to\infty}\|Q^{(n)}(t)-Q(t)\|_{\mathfrak{Y}^1(I^\epsilon)}=0.
\end{equation}
In particular, we have
\begin{equation}
\lim_{n\to\infty}\|\rho_{Q^{(n)}(t)}-\rho_{Q(t)}\|_{L^\infty(I^\epsilon; L^2)}=0.
\end{equation}
Moreover, for any $t\in I^\epsilon$, by weak semi-continuity of $\textup{Tr}(-\Delta-1)Q$, we have
\begin{equation}
\textup{Tr}(-\Delta-1)Q(t)\leq \liminf_{n\to\infty}\textup{Tr}(-\Delta-1)Q^{(n)}(t).
\end{equation}
Hence, it follows that 
\begin{equation}
\begin{aligned}
\mathcal{E}(Q(t))&=\textup{Tr}(-\Delta-1)Q(t)+\frac{1}{2}\int_{\mathbb{R}^d}\big(\rho_{Q(t)}\big)^2 dx\\
&\leq \liminf_{n\to\infty}\Big\{\textup{Tr}(-\Delta-1)Q^{(n)}(t)+\frac{1}{2}\int_{\mathbb{R}^d}\big(\rho_{Q^{(n)}(t)}\big)^2 dx\Big\}.
\end{aligned}
\end{equation}
Then, using the conservation law for regular solutions and Lemma \ref{approximation}, we prove that 
\begin{equation}
\begin{aligned}
\mathcal{E}(Q(t))&\leq\liminf_{n\to\infty}\Big\{\textup{Tr}(-\Delta-1)Q_0^{(n)}+\frac{1}{2}\int_{\mathbb{R}^d}\big(\rho_{Q_0^{(n)}}\big)^2 dx\Big\}\\
&=\Big\{\textup{Tr}(-\Delta-1)Q_0+\frac{1}{2}\int_{\mathbb{R}^d}\big(\rho_{Q_0}\big)^2 dx\Big\}=\mathcal{E}(Q_0).
\end{aligned}
\end{equation}
Repeating the above backward in time, we prove that $\mathcal{E}(Q_0)\leq \mathcal{E}(Q(t))$. 
Thus, we conclude that $\mathcal{E}(Q_0)= \mathcal{E}(Q(t))$.
\end{proof}

\section{Proof of the main theorem (Theorem \ref{main theorem})} \label{sec-loctoglob}

We present the proof for positive time only. By the conservation of the relative energy, $Q(t)\in\mathcal{K}$ for all $t\in [0, T^+)$, where $T^+$ is the maximal existence time (in the positive direction) for $Q$ 
given in the Theorem \ref{LWP0} . Suppose that $T^+<\infty$. Then, we evolve $Q(t)$ from the time $T^+-\epsilon$ with sufficiently small $\epsilon>0$. However, since the size of the interval of existence has a lower bound depending only on 
$$
\textup{Tr}_0(-\Delta-1)Q(T^--\epsilon)\leq\mathcal{E}(Q(T^--\epsilon))=\mathcal{E}(Q_0),
$$
we can deduce a contradiction. 

\appendix

\section{Relative energy}

\begin{lemma}\label{relative kinetic energy'}
Let $d=2,3$. Suppose that $Q\in \mathcal{K}$.\\
$(i)$ The relative kinetic energy $\textup{Tr}_0(-\Delta-1)Q$ is positive. Moreover, 
\begin{equation}\label{kinetic part}
\textup{Tr}_0(-\Delta-1)Q=\sum_{\pm}\big\||\Delta+1|^{\frac{1}{2}}Q^{\pm\pm}|\Delta+1|^{\frac{1}{2}}\big\|_{\mathfrak{S}^1}.
\end{equation}
$(ii)$ The density function $\rho_Q$ satisfies 
\begin{equation}
\|\rho_Q\|_{L^2}\lesssim \textup{Tr}_0(-\Delta-1)Q+\Big\{\textup{Tr}_0(-\Delta-1)Q\Big\}^{\frac{1}{2}}.
\end{equation}
Therefore, the relative energy $\mathcal{E}(Q)$ is finite. 
\end{lemma}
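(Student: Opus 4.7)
For $(i)$, the plan is to exploit the sign structure of the diagonal blocks of $Q = \gamma - \Pi^-$ implied by $0 \leq \gamma \leq \mathbf{1}$. From $\gamma \geq 0$ we obtain $Q^{++} = \Pi^+\gamma\Pi^+ \geq 0$, and from $\gamma \leq \mathbf{1}$ we obtain $-Q^{--} = \Pi^-(\mathbf{1}-\gamma)\Pi^- \geq 0$. Conjugation by $|\Delta+1|^{1/2}$ preserves nonnegativity, so both $|\Delta+1|^{1/2}Q^{++}|\Delta+1|^{1/2}$ and $-|\Delta+1|^{1/2}Q^{--}|\Delta+1|^{1/2}$ are nonnegative operators; they are trace-class because $Q \in \mathcal{X}$. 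Since $\textup{Tr}\,A = \|A\|_{\mathfrak{S}^1}$ for nonnegative trace-class $A$, summing the two blocks with the natural signs (consistent with $\textup{Tr}_0(-\Delta-1)Q = \textup{Tr}(-\Delta-1)Q$ on finite-rank smooth $Q$) yields both the identity \eqref{kinetic part} and the positivity.

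For $(ii)$, the plan is to combine the Lieb-Thirring inequality \eqref{LT inequality} with the generalized Sobolev inequality \eqref{Sobolev inequality}. Because $\Pi^-$ is translation invariant on $\mathbb{R}^d$, its density $\rho_{\Pi^-}$ is the positive constant $c_0 = |B_1|/(2\pi)^d$, and the right-hand side of \eqref{LT inequality} becomes $\int\phi(\rho_Q)\,dx$ with
\[
\phi(g) := (c_0+g)^{1+\frac{2}{d}} - c_0^{1+\frac{2}{d}} - \tfrac{2+d}{d}c_0^{\frac{2}{d}}g.
\]
A Taylor expansion gives $\phi(0) = \phi'(0) = 0$ and $\phi''(g) > 0$, so $\phi(g) \gtrsim g^2$ for $|g| \leq c_0$ and $\phi(g) \gtrsim g^{1+2/d}$ for $g > c_0$. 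When $d = 2$, the exponent $1+2/d$ equals $2$ and Lieb-Thirring alone yields $\|\rho_Q\|_{L^2}^2 \lesssim \textup{Tr}_0(-\Delta-1)Q$, which is stronger than the stated bound. When $d = 3$, I would split $\rho_Q$ at the threshold $c_0$: the small-amplitude part $\rho_Q\mathbf{1}_{|\rho_Q|\leq c_0}$ is $L^2$-controlled directly by $\textup{Tr}_0^{1/2}$, while the large-amplitude part $\rho_Q\mathbf{1}_{\rho_Q>c_0}$ is only $L^{5/3}$-controlled by Lieb-Thirring. To upgrade the large part to $L^2$, I invoke the Sobolev inequality \eqref{Sobolev inequality} after truncating to frequencies strictly above the Fermi sphere (using $\Pi_2^+$ in place of $\Pi^+$), where $\langle\nabla\rangle \lesssim |\Delta+1|^{1/2}$ allows the $\mathcal{H}^1$-norm of the high-frequency block to be controlled by the relative kinetic energy. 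Summing the two regimes delivers $\|\rho_Q\|_{L^2} \lesssim \textup{Tr}_0(-\Delta-1)Q + \{\textup{Tr}_0(-\Delta-1)Q\}^{1/2}$, from which finiteness of $\mathcal{E}(Q)$ is immediate.

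The main obstacle I anticipate is the three-dimensional case. Since $|\Delta+1|^{-1/2}$ is unbounded near the Fermi sphere $\{|\xi|=1\}$, the relative kinetic energy does not control the Hilbert-Schmidt Sobolev norm $\|Q^{++}\|_{\mathcal{H}^1}$ directly, and a naive application of Sobolev fails. The frequency splitting at $|\xi|^2 = 2$ bypasses this singularity on the high-frequency side, but the intermediate shell $1 < |\xi|^2 \leq 2$ requires a separate, more delicate argument using $\|Q\|_{\textup{Op}} \leq 1$ together with the finite measure of the shell to absorb its contribution.
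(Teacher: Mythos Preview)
Your argument for $(i)$ is correct and coincides with the paper's proof.

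For $(ii)$ in $d=2$, your observation that $\phi(g)=(c_0+g)^2-c_0^2-2c_0g=g^2$ exactly, so that the Lieb--Thirring inequality \eqref{LT inequality} alone gives $\|\rho_Q\|_{L^2}^2\lesssim\textup{Tr}_0(-\Delta-1)Q$, is correct and in fact cleaner than the paper, which treats $d=2,3$ uniformly via a frequency decomposition of $Q$.

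In $d=3$ there is a genuine gap. First, the amplitude split $\rho_Q=\rho_Q\mathbf{1}_{|\rho_Q|\le c_0}+\rho_Q\mathbf{1}_{\rho_Q>c_0}$ does not connect to the Sobolev inequality \eqref{Sobolev inequality}: Sobolev bounds $\|\rho_A\|_{L^2}$ by $\|A\|_{\mathcal{H}^1}$ for an \emph{operator} $A$, and there is no operator whose density is $\rho_Q\mathbf{1}_{\rho_Q>c_0}$. The paper instead decomposes the operator, $Q=\Pi_2^-Q\Pi_2^-+(Q-\Pi_2^-Q\Pi_2^-)$. Since $-\Pi^-\le\Pi_2^-Q\Pi_2^-\le\Pi_2^-$, the density of the low--low block is automatically bounded, so Lieb--Thirring applied to this block gives $\|\rho_{\Pi_2^-Q\Pi_2^-}\|_{L^2}^2\lesssim\textup{Tr}_0$ directly; the remainder is handled by Sobolev. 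With this split the intermediate shell $1<|\xi|^2\le 2$ needs no separate treatment: it sits inside $\Pi_2^-$ and is absorbed by the Lieb--Thirring step.

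Second, and more importantly, the mixed block $\Pi_2^+Q\Pi_2^-$ cannot be handled by your proposed substitute of ``$\|Q\|_{\textup{Op}}\le 1$ plus finite shell measure'': neither $\Pi_2^-$ nor the shell projection is Hilbert--Schmidt, so no $\mathfrak{S}^2$ bound follows. The paper's key ingredient here is the algebraic identity
\[
Q^{++}-Q^{--}-Q^2=\gamma(\mathbf{1}-\gamma)\ge 0,
\]
which yields $\||\Delta+1|^{1/2}Q\|_{\mathfrak{S}^2}^2=\textup{Tr}|\Delta+1|^{1/2}Q^2|\Delta+1|^{1/2}\le\textup{Tr}_0(-\Delta-1)Q$ and hence $\|\Pi_2^+Q\Pi_2^-\|_{\mathcal{H}^1}\lesssim\{\textup{Tr}_0(-\Delta-1)Q\}^{1/2}$. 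This identity is precisely what produces the square-root term in the statement, and it is missing from your outline.
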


\begin{proof}
$(i)$: \eqref{kinetic part} follows from the definition of the relative kinetic energy (see \eqref{relative kinetic energy}), since $Q^{++}=\Pi^+(\gamma-\Pi^-)\Pi^+=\Pi^+\gamma\Pi^+\geq 0$ and $Q^{--}=\Pi^-(\gamma-\Pi^-)\Pi^-=\Pi^-(\gamma-1)\Pi^-\leq 0$.

$(ii)$: Applying the Lieb-Thirring inequality in Frank, Lewin, Lieb and Seiringer \eqref{LT inequality} to $\Pi_2^-Q\Pi_2^-=\Pi_2^-\gamma\Pi_2^--\Pi^-$ and using the Taylor series expansion for $(1+x)^{1+\frac{2}{d}}-1-\frac{2+d}{d}x$, we get
\begin{equation}
\textup{Tr}_0(-\Delta-1)\Pi_2^-Q\Pi_2^-\gtrsim\int_{\mathbb{R}^d} \min\Big\{\big(\rho_{\Pi_2^-Q\Pi_2^-}\big)^2,\big(\rho_{\Pi_2^-Q\Pi_2^-}\big)^{1+\frac{2}{d}}\Big\}dx.
\end{equation}
However, since $-\Pi^-\leq\Pi_2^-Q\Pi_2^-=\Pi_2^-\gamma\Pi_2^--\Pi^-\leq\Pi_2^-$, we have $|\rho_{\Pi_2^-Q\Pi_2^-}(x)|\lesssim 1$. Hence, it follows from Lemma \ref{relative kinetic energy'} $(i)$ that
\begin{equation}\label{energy space proof 1}
\|\rho_{\Pi_2^-Q\Pi_2^-}\|_{L^2}^2\lesssim\textup{Tr}_0(-\Delta-1)Q.
\end{equation}
On the other hand, by the generalized Sobolev inequality \eqref{Sobolev inequality}, we get
\begin{equation}
\|\rho_{(Q-\Pi_2^-Q\Pi_2^-)}\|_{L^2}\lesssim\|Q-\Pi_2^-Q\Pi_2^-\|_{\mathcal{H}^1}\leq\|\Pi_2^+Q\Pi_2^+\|_{\mathcal{H}^1}+2\|\Pi_2^+Q\Pi_2^-\|_{\mathcal{H}^1}.
\end{equation}
It is obvious that by Lemma \ref{relative kinetic energy'} $(i)$,
\begin{equation}
\|\Pi_2^+Q\Pi_2^+\|_{\mathcal{H}^1}\lesssim \||\Delta+1|^{\frac{1}{2}}Q^{++}|\Delta+1|^{\frac{1}{2}}\|_{\mathfrak{S}^2}\leq\textup{Tr}_0(-\Delta-1)Q.
\end{equation}
Moreover,
\begin{equation}\label{energy space proof 2}
\|\Pi_2^+Q\Pi_2^-\|_{\mathcal{H}^1}^2\lesssim\||\Delta+1|^{\frac{1}{2}}Q\|_{\mathfrak{S}^2}^2=\textup{Tr}|\Delta+1|^{\frac{1}{2}}Q^2|\Delta+1|^{\frac{1}{2}}\leq\textup{Tr}_0(-\Delta-1)Q,
\end{equation}
because
\begin{equation}
\begin{aligned}
Q^{++}-Q^{--}-Q^2&=\Pi_1^+(\gamma-\Pi_1^-)\Pi_1^+-\Pi_1^-(\gamma-\Pi_1^-)\Pi_1^--(\gamma-\Pi_1^-)^2\\
&=\cdots=\gamma(1-\gamma)\geq 0.
\end{aligned}
\end{equation}
Collecting all, we complete the proof.
\end{proof}

\subsection*{Acknowledgements}
The work of T.C. was supported by NSF CAREER grant DMS-1151414.
Y.H. thanks Seoul National University (SNU) for their hospitality during Summer 2015, and Prof. Sanghyuk Lee for helpful discussions.
The work of N.P. was supported by NSF grant DMS-1516228. 
Also N.P. gratefully acknowledges hospitality and support from 
the Mathematical Sciences Research Institute
(MSRI) in Berkeley, California via the NSF grant  DMS-1440140.

\subsection*{Conflict of Interest}
The authors declare that they have no conflict of interest

\end{document}